\documentclass[12pt,letterpaper]{article}

\pdfoutput=1 

\usepackage[utf8]{inputenc}
\usepackage{amsmath, amsfonts, amssymb, amsthm}  
\usepackage[top=2.5cm, bottom=2.5cm, left=2.5cm, right=2.5cm]{geometry}
\usepackage{mathdots, mathrsfs}
\usepackage{titlesec}
\usepackage{enumitem}
\usepackage{hyperref}
\usepackage[nameinlink, capitalise]{cleveref} 
\crefname{equation}{}{}  
\crefname{figure}{Figure}{Figures}

\usepackage[bottom]{footmisc}
\usepackage{graphicx, svg}
\usepackage{wrapfig}
\usepackage{verbatim, textcomp}
\usepackage[font=small,labelfont=bf]{caption}
\usepackage{placeins} 
\usepackage{csquotes}
\usepackage{microtype}

\usepackage{subcaption}

\usepackage[
backend=biber,
date=year, 
url=true, 
eprint=false,
doi=false,
isbn=false,
style=numeric, 
citestyle=numeric 
]{biblatex}
\addbibresource{tubings.bib}
\usepackage{xurl} 

%
%

\AtEveryBibitem{ 
    \clearfield{urlyear}
    \clearfield{urlmonth}
}
\DefineBibliographyStrings{english}{%
	mathesis = {Master's thesis},
}
\newbibmacro{string+doi}[1]{
    \iffieldundef{doi}{
    	\iffieldundef{url}{
    		#1}{
        		\href{\thefield{url}}{#1}
        	}
        } {
        	\href{http://dx.doi.org/\thefield{doi}}{#1}
        }
}

\DeclareFieldFormat{url}{ 
	\iffieldundef{eprint:arxiv}{
		\iffieldundef{eprint}{
			\iffieldundef{doi}{ 
				\mkbibacro{URL}\addcolon\space{ \url{#1}}
			}{} 
		}{ 
			arXiv \addcolon\space\href{https://arxiv.org/abs/\thefield{eprint}}{\thefield{eprint}}
		}
	}{ 
		arXiv \addcolon\space\url{\thefield{eprint:arxiv}}
	}
}

\DeclareFieldFormat{eprint}{%
	\iffieldundef{journaltitle}{
		\mkbibacro{URL}\addcolon\space\url{#1}%
	}{%
	}%
}

\DeclareFieldFormat[book]{title}{\usebibmacro{string+doi}{\mkbibemph{#1}}}
\DeclareFieldFormat[inproceedings]{title}{\usebibmacro{string+doi}{\mkbibquote{#1}}}
\DeclareFieldFormat[incollection]{title}{\usebibmacro{string+doi}{\mkbibquote{#1}}}
\DeclareFieldFormat[article]{title}{\usebibmacro{string+doi}{\mkbibquote{#1}}}

\usepackage{tikz}
\usetikzlibrary{calc}
\tikzset{  
	vertex/.style={circle, draw=black, thick,  fill=black, minimum size = 2mm, inner sep=0mm},
    smallVertex/.style={circle, draw=black, thick,  fill=black, minimum size = 1.4mm, inner sep=0mm},
    edge/.style={black,line width = .7mm, line cap=round},
    tube1/.style={blue,line width=.6mm, double distance=3mm,rounded corners=.1mm,line cap=round},
    tube2/.style={blue,line width=.6mm, double distance=5mm,rounded corners=.5mm,line cap=round},
    tube3/.style={blue,line width=.6mm, double distance=7mm,rounded corners=.1mm,line cap=round},
    chord/.style={line width = .5mm, black, bend angle=70,bend left,line cap=round},
	none/.style={}
}

\titleformat{\section}{\normalsize\scshape\center}{\thesection}{1em}{}
\titleformat{\subsection}{\normalsize\scshape\center}{\thesubsection}{1em}{}

\makeatletter
\newtheoremstyle{indented} 
  {3pt}
  {3pt}
  {\addtolength{\@totalleftmargin}{2em}
   \addtolength{\linewidth}{-2em}
   \parshape 1 2em \linewidth}
  {}
  {\bfseries}
  {.}
  {.5em}
  {}
\makeatother

\theoremstyle{definition}
\newtheorem{definition}{Definition}[section]

\theoremstyle{indented}
\newtheorem{example}[definition]{Example}
\newtheorem{remark}[definition]{Remark}

\theoremstyle{plain}
\newtheorem{theorem}[definition]{Theorem}
\newtheorem{proposition}[definition]{Proposition}
\newtheorem{lemma}[definition]{Lemma}


\usepackage{ifthen}   
\usepackage{xstring}  
\usepackage{tikz}
\usetikzlibrary{decorations.pathreplacing}

\newcommand*{\IsInteger}[3]{
	\IfStrEq{#1}{ }{%
		#3
	}{%
		\IfInteger{#1}{#2}{#3}%
	}%
}%

\newcommand{\rtLine}[1]{
	\IsInteger{#1}{
		\ifthenelse{ #1=1}
		{\bullet}
		{\ifthenelse {#1=2}
			{\tikz[baseline=0]  {\draw [thick](0,-.05) -- (0,.2 ); \node [] at (0,-.05) {\textbullet};\node [] at (0,.2) {\textbullet}; }}
			{\ifthenelse {#1=3}
				{\tikz[baseline=0]   {\draw [thick](0,-.15) -- (0,.3 ); \node [] at (0,-.15) {\textbullet};\node [] at (0,.3) {\textbullet};\node [] at (0,.07) {\textbullet}; }}
				{\ifthenelse {#1=4}
					{\tikz[baseline=0]   {\draw [thick](0,-.25) -- (0,.4 ); \node [] at (0,-.25) {\textbullet};\node at (0,-.05) {\textbullet}; \node [] at (0,.15) {\textbullet}; \node  at (0,.35) {\textbullet}; }}
					{ \tikz[baseline=0]   {\draw [thick](0,.2) -- (0,.4 ); \node [] at (0,-.25) {\textbullet};\node at (0,-.05) {$\cdot$};\node at (0,.05) {$\cdot$}; \node [] at (0,.2) {\textbullet}; \node  at (0,.4) {\textbullet}; \node at (.4, .1){$#1$};\draw [decorate,decoration={brace}] (.15,.45) -- (.15,-.25); } }
				}
			}
		}
	}{
	 \tikz[baseline=0]   {\draw [thick](0,.2) -- (0,.4 ); \node [] at (0,-.25) {\textbullet};\node at (0,-.05) {$\cdot$};\node at (0,.05) {$\cdot$}; \node [] at (0,.2) {\textbullet}; \node  at (0,.4) {\textbullet}; \node at (.4, .1){$#1$};\draw [decorate,decoration={brace}] (.15,.45) -- (.15,-.25); }
	}
}

\newcommand{\rtV}{{
	\tikz[baseline=0] {
		\draw [thick](-.15,-.05) -- (0,.2 )--(.15,-.05); 
		\node [] at (-.15,-.05) {\textbullet}; 
		\node [] at (0,.2) {\textbullet};  
		\node [] at (.15,-.05) {\textbullet}; 
	}
}}
\newcommand{\rtW}{{
	\tikz[baseline=0]{
		\draw [thick](-.15,-.1) -- (0,.2 )--(.15,-.1); 
		\draw[thick] (0,.2)--(0,-.1); 
		\node [] at (-.2,-.1) {\textbullet}; 
		\node [] at (0,.2) {\textbullet};  
		\node [] at (.2,-.1) {\textbullet}; 
		\node [] at (0,-.1) {\textbullet};  
	}
}}

\newcommand{\Aut}{\operatorname{Aut}}

\newcommand{\CK}{\mathcal H}
\newcommand{\Tub}{\mathrm{Tub}}
\newcommand{\od}[2][]{\operatorname{od}_{#1}(#2)}
\newcommand{\id}{\mathrm{id}}
\newcommand{\ZZ}{\mathbb Z}
\newcommand{\rootv}{\operatorname{rt}}

%
%
\usepackage{tocloft}
\usepackage{xparse} 
\usepackage{xstring}
\ExplSyntaxOn
\newcounter{todo}
\newcommand{\listtodoname}{ {\large \color{red} List ~of~ TODOs}}
\newlistof {todos}{tod}{\listtodoname}
\crefname{todo}{{\color{red}TODO}}{{\color{red}TODOs}}
\NewDocumentCommand{\todo}{o o m}{
    \refstepcounter{todo}  
    \addcontentsline{tod}{todos}{ \protect\numberline{\thetodo}   {  \protect\StrLeft{#3}{40}\protect\ldots} } 
    \noindent{ \color{red} { [ \textbf{TODO\; \thetodo} \IfNoValueTF{#1}{}{\tl_if_blank:nTF{#1}{}{ {\footnotesize {~by~ #1} }}}
    \IfNoValueTF{#2}{}{~for~ \textbf{#2}}\textbf{:} ~
    #3 ] } }   
}
\ExplSyntaxOff

\begin{document}
\title{\uppercase{\textbf{\normalsize Tubings, chord diagrams, and Dyson--Schwinger equations}}}
\author{\small{\textsc{Paul-Hermann Balduf, Amelia Cantwell, Kurusch Ebrahimi-Fard,}} \\ \small{\textsc{Lukas Nabergall, Nicholas Olson-Harris, and Karen Yeats}}\thanks{We thank Guillaume Laplante-Anfossi for instructive comments regarding the polytope connection, and  the referee for their detailed reading. PHB thanks KY for a stay at U Waterloo in early 2022, during which many of the results of this work were first discovered. KY is supported by an NSERC Discovery grant and the Canada Research Chairs program.  KY thanks the Perimeter Institute for its support and NTNU Trondheim for its hospitality on a 2018 visit which ultimately led to this project. Research at Perimeter Institute is supported in part by the Government of Canada through the Department of Innovation, Science and Economic Development Canada and by the province of Ontario through the Ministry of Economic Development, Job Creation and Trade. This research was also supported in part by the Simons Foundation through the Simons Foundation Emmy Noether Fellows Program at Perimeter Institute.\\ \indent MSC subject classification: Primary: 81T15. Secondary: 05A15, 05C05 \\ \indent Addresses: PHB, AC, LN, NOH, KY: C\&O Dept., University of Waterloo, Waterloo, ON, Canada.\\
\indent KEF: Dept.~of Mathematical Sciences, Norwegian University of Science and Technology, Trondheim, Norway.
}}

\date{\small{\textsc{\today}}}

\maketitle

\begin{abstract}
 We give series solutions to single insertion place propagator-type systems of Dyson--Schwinger equations using binary tubings of rooted trees.  These solutions are combinatorially transparent in the sense that each tubing has a straightforward contribution.  The Dyson--Schwinger equations solved here are more general than those previously solved by chord diagram techniques, including systems and non-integer values of the insertion parameter $s$.  We remark on interesting combinatorial connections and properties.
\end{abstract}

\allowdisplaybreaks


\section{Introduction}

Dyson--Schwinger equations \cite{dyson_matrix_1949,schwinger_green_1951a,schwinger_green_1951} 
play a fundamental role in quantum field theory, where they are understood as quantum equations of motion. In mathematical terms, they form a set of coupled integral equations describing relations between Green's functions, i.e.~correlation functions of quantum fields. See \cref{sec physical set up,sec mellin transform} for the physical background. Finding exact solutions to Dyson--Schwinger equations is extremely challenging. Therefore various perturbative expansions are employed, based on diagrammatical tools such as Feynman graphs \cite{feynman_spacetime_1949,itzykson_quantum_2005,broadhurst_renormalization_1999}, or chord diagrams \cite{nabergall_combinatorics_2023, marie_chord_2013, hihn_generalized_2019, courtiel_terminal_2017, courtiel_connected_2019, courtiel_nexttok_2020}. 

In the present paper, we introduce a new series expansion of Dyson-Schwinger equations, based on a certain notion of tubings on rooted trees. We will discuss how this notion of tubings on the one hand lets us give series solutions to a wide class of Dyson--Schwinger equations in quantum field theory, and on the other hand relates to rooted connected chord diagrams.  
The tubing approach improves on the Feynman graph expansion in that each tubing has a simple and combinatorially controlled contribution to the sum. Furthermore, it improves upon the chord diagram expansion by being generalizable, with transparent proofs.



Our tubings are \emph{binary tubings}, as will be defined in \cref{def tubing}, because each tube of size at least 2 will be partitioned into exactly two smaller tubes.  This is a special case of Galashin's notion of \textit{tubings} or \textit{pipings} of posets \cite{galashin_associahedra_2023} but we came to it independently.  Additionally, our notion of tubing is different from, but related to, the notion of tubings of trees and graphs that are used to define graph associahedra, see \cite{carr_coxeter_2006}.  We will discuss the connection between these different notions of tubings further in   \cref{sec associahedra}.  Outside of those discussions, we will only be working with binary tubings, so we will simply refer to tubings meaning binary tubings.

Using these tubings we will be able to both clarify and surpass the previous theory of chord diagram expansions of Dyson--Schwinger equations. However, the connection between tubings and rooted connected chord diagrams remains interesting as we will discuss in \cref{sec chord}.  This connection is purely combinatorial.

The most general single Dyson--Schwinger equation (DSE) that we give series solutions for has the form 
\begin{align}\label{dse_differential_general}
    G(x,L) = 1 + \sum_{k\geq 1}x^kG\left(x, \frac{\partial}{\partial\rho}\right)^{1+sk}(e^{L\rho}-1)F_k(\rho)\bigg|_{\rho=0}.
\end{align}
We also solve certain systems of Dyson--Schwinger equations of a similar form; see \cref{sec systems}.  From a combinatorial perspective, if the $F_k(\rho)$ are taken as given formal Laurent series with first order poles, then this equation recursively defines $G(x,L)$ as a bivariate formal power series, and so from this perspective \cref{dse_differential_general} can be seen in the spirit of algebraic enumeration with $G(x,L)$ ultimately as a generating series for tubings.

Note that conventions with regard to signs vary in the literature. The physical motivation for this equation along with the meaning of the arguments and parameters is explained in \cref{sec mellin transform}.  Unlike in previous work on chord diagram expansions, here $s\in \mathbb{R}$ is no longer restricted to certain integers (the sign of $s$ is also reversed compared to this previous work).  The bulk of this paper is devoted to proving that this equation has a series solution given by tubings of trees, as stated in \cref{thm main thm}.  

In \cref{sec definitions},  we review the necessary background, both physical and algebraic.

We look at a few special cases of tubings in \cref{sec examples}.  
Then \cref{sec tubing feynman rules} proves how the structure of Feynman rules implies what the contribution of each tubing must be, and brings this together to prove the main theorem solving the Dyson--Schwinger equation by a tubing expansion.

An extremely nice feature of this solution, and one of the original motivations for this project, is that it is not just an alternate expansion for the Green functions which contrasts with the Feynman graph expansion in the simple contributions of each object, but, moreover, each Feynman graph maps to a particular set of tubings which gives the same contribution, as explained in \cref{rem graph by graph}. 

In \cref{sec chord} we return to the chord diagrams and prove a bijection between tubed rooted plane trees and rooted connected chord diagrams and verify that under this bijection our tubing expansions of solutions to Dyson--Schwinger equations include the previously known chord diagram expansions of \cite{marie_chord_2013, hihn_generalized_2019}.  We also look at a few combinatorially interesting special cases. 

While our interest in these objects is in this use to solve Dyson--Schwinger equations in quantum field theory, they and related objects are also interesting as pure combinatorics.  We overview this in \cref{sec associahedra} notably including the relation to the other notion of tubings that exists in the literature and to associahedra.  The results of \cref{sec associahedra} are not new, but are of interest both to physicists and mathematicians and are included in order to link our results to some other areas of contemporary interest.

We conclude in \cref{sec conclusion}.

\section{Definitions and notation}\label{sec definitions}

\subsection{Rooted trees and binary tubings}\label{sec rooted trees and binary tubings}

We will have cause to work with both rooted trees and plane rooted trees.  

\begin{definition}\label{def rooted tree}
    Recursively, a \emph{rooted tree} is a vertex $r$, called the root, and a possibly empty multiset of rooted trees whose roots are the children of $r$.  A \emph{plane rooted tree} is a vertex $r$, also called the root, and a possibly empty ordered list of plane rooted trees whose roots are the children of $r$. 
\end{definition}

 By forgetting the order structure at each vertex, every plane rooted tree has an underlying rooted tree structure.  For the tubing expansions of Dyson--Schwinger equations it suffices to work without a plane structure, though that part can all be done at the plane level, but for the connection to chord diagrams it will be more convenient to work with plane rooted trees.

Write $\rootv(t)$ for the root vertex of a tree $t$. A vertex with no children in a rooted tree or a plane rooted tree is known as a \emph{leaf}.  We will write $\od v$ for the number of children of the vertex $v$.  An $n$-ary rooted tree is a tree where all vertices have at most $n$ children. 

We can view a rooted tree or a plane rooted tree as a graph with the extra information that the vertex set of the graph is the same as the vertex set of the tree and the edge relation is given by the parent-child relation.  We will use both graph language and parent-child language on rooted trees and plane rooted trees without further definition; for instance, we will speak of paths in trees and connectivity meaning the corresponding graph notions, and we will also speak of the descendants of a vertex as the set containing the vertex's children, its children's children, etc.  An \emph{automorphism} of a rooted tree is an automorphism of it seen as a graph which also fixes the root; denote the group of automorphisms of the rooted tree $t$ by $\operatorname{Aut}(t)$.  We will draw rooted trees with the root vertex at the top and the leaves on the bottom, see \cref{fig:tree_introduction}.

The \emph{rooted subtree} at the vertex $v$ of a rooted tree or a rooted plane tree is the subtree consisting of $v$ and all its descendants.  The rooted subtree inherits a rooted tree or rooted plane tree structure from the outer tree.  Furthermore, if $t$ is a rooted tree or a rooted plane tree and $t'$ a rooted subtree, then $t\setminus t'$ is also connected and hence is itself a rooted tree or rooted plane tree, but not a rooted subtree in the sense above.

\medskip

We will also have cause to work with trees with decorated or weighted vertices.  First, for a rooted
tree or plane rooted tree, the \emph{size} of the tree is the number of vertices.  A \emph{decorated
rooted tree} or \emph{decorated plane rooted tree}, $t$, decorated from the set $D$ is a rooted tree
or a plane rooted tree respectively with a function $d\colon V(t) \rightarrow D$. An
\textit{automorphism} of a decorated rooted tree is an automorphism of the underlying rooted tree
that preserves the decorations of all vertices; again the automorphism group of the rooted tree $t$ is denoted $\Aut(t)$.

A \emph{weighted rooted tree} or \emph{weighted plane rooted tree} $t$ is a decorated rooted tree or
a decorated plane rooted tree, respectively, where the set of decorations is $\mathbb{Z}_{\geq 1}$. In
this case we use slightly different language, writing $w\colon V(t)\rightarrow \mathbb{Z}_{\geq 1}$
for the function and calling $w(v)$ the \emph{weight} of the vertex $v$. (Distinguishing decoration
from weight will become essential when considering systems in \cref{sec systems} as the weight will
then be only part of the decoration.) Whenever useful we will identify the unweighted rooted tree or
plane rooted tree with the weighting of the same tree that gives weight 1 to each vertex.  The
\emph{weight} of a weighted rooted tree or weighted plane rooted tree is the sum of the weights of
the vertices.  For an unweighted tree the weight and size agree.

\medskip

\begin{definition}\label{def tube}
Let $t$ be a rooted tree or a plane rooted tree.  A \emph{tube} of $t$ is a set of vertices of $t$ that induces a connected subgraph of $t$. 
\end{definition}
A connected subgraph of a tree $t$ is a tree, hence is a subtree in an unrooted sense, but we are reserving the term subtree for rooted subtree in the sense given above.

We will draw tubes by drawing a curve around the relevant vertices, as in \cref{fig:tree_introduction}.

\begin{definition}\label{def tubing}
Let $t$ be a rooted tree or a plane rooted tree.  A \emph{binary tubing} $\tau$ of $t$ is a set of tubes such that 
\begin{enumerate}
    \item $\tau$ contains the tube of all vertices of $t$, and
    \item each tube $b$ of $\tau$ either is a single vertex or there are two other tubes in $\tau$ which partition $b$.
\end{enumerate}
\end{definition}

\begin{remark}
   More generally we can consider tubings of arbitrary partially ordered sets. In this context a tube is a connected \textit{convex} subset of a poset, which is automatic in the case of trees. This notion appears in Galashin's work on the $P$-associahedron \cite{galashin_associahedra_2023}, where binary tubings are known as maximal improper tubings and are in bijection with the maximal \textit{proper} tubings that index the vertices of the polytope. The combinatorics of tubings of posets in general and the special case of trees have been studied by Nguyen and Sack \cite{nguyen_poset_2023a, nguyen_poset_2023}.
\end{remark}

To improve readability, we will generally not draw the outermost tube, which encircles the full tree, nor the innermost tubes around each individual vertex, see \cref{fig:tubings}.

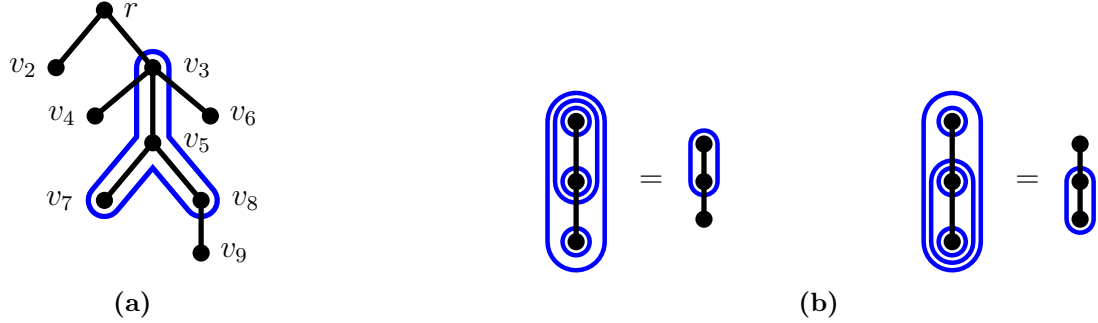
\begin{figure}[htb]
    \centering
    \begin{subfigure}[b]{.3\linewidth}
        \centering
        \begin{tikzpicture}

\coordinate (v1) at (0,.3){};
\coordinate (v2) at ($(v1) + (230:1)$){};
\coordinate (v3) at ($(v1) + (310:1)$){};
\coordinate (v4) at ($(v3) + (220:1)$){};
\coordinate (v5) at ($(v3) + (270:1)$){};
\coordinate (v6) at ($(v3) + (320:1)$){};
\coordinate (v7) at ($(v5) + (230:1)$){};
\coordinate (v8) at ($(v5) + (310:1)$){};
\coordinate (v9) at ($(v8) + (270:.7)$){};

\draw[blue,line width=5mm, line cap=round] (v3) -- (v5) -- (v7);
\draw[blue,line width=5mm, line cap=round] (v5) -- (v8);
\draw[white,line width=3.6mm, line cap=round] (v3) -- (v5) -- (v7);
\draw[white,line width=3.6mm, line cap=round] (v5) -- (v8);

\draw[edge] (v1)--(v2);
\draw[edge] (v1)--(v3)--(v4);
\draw[edge] (v3) -- (v6);
\draw[edge] (v3) -- (v5) -- (v7);
\draw[edge] (v5) -- (v8) -- (v9);
    
\node [vertex,label=right:$r$] at (v1){};
\node [vertex,label=left:$v_2$] at (v2){};
\node [vertex,label={[label distance=1.5mm]right:$v_3$}] at (v3){};
\node [vertex,label=left:$v_4$] at (v4){};
\node [vertex,label={[label distance=1.5mm]right:$v_5$}] at (v5){};
\node [vertex,label=right:$v_6$] at (v6){};
\node [vertex,label={[label distance=1.5mm]left:$v_7$}] at (v7){};
\node [vertex,label={[label distance=1.5mm]right:$v_8$}] at (v8){};
\node [vertex,label=right:$v_9$] at (v9){};
        \end{tikzpicture}
        \caption{}
        \label{fig:tree_introduction}
    \end{subfigure}
    \hfill 
    \begin{subfigure}[b]{.6\linewidth}
        \centering
        \begin{tikzpicture}

\coordinate (v3) at (3,-.3){};
\coordinate (v2) at (3,.5){};
\coordinate (v1) at (3,1.3){};
\draw [tube3] (v1) -- (v3);
\draw [tube2] (v1)--(v2);
\draw [blue, line width=.6mm] (v1) circle (1.8mm);
\draw [blue, line width=.6mm] (v2) circle (1.8mm);
\draw [blue, line width=.6mm] (v3) circle (1.8mm);
\draw [edge] (v1) to (v2) to (v3);
\node [vertex] at (v1){};
\node [vertex] at (v2){};
\node [vertex] at (v3){};

\node at (4,.5){$=$};

\coordinate (v3) at (4.7,0){};
\coordinate (v2) at (4.7,.5){};
\coordinate (v1) at (4.7,1){};
\draw[tube1] (v1)--(v2);
\draw [edge] (v1) to (v2) to (v3);
\node [vertex] at (v1){};
\node [vertex] at (v2){};
\node [vertex] at (v3){};

\coordinate (v3) at (8,-.3){};
\coordinate (v2) at (8,.5){};
\coordinate (v1) at (8,1.3){};
\draw [tube3] (v1) -- (v3);
\draw [tube2] (v2) -- (v3);
\draw [blue, line width=.6mm] (v1) circle (1.8mm);
\draw [blue, line width=.6mm] (v2) circle (1.8mm);
\draw [blue, line width=.6mm] (v3) circle (1.8mm);
\draw [edge] (v1) to (v2) to (v3);
\node [vertex] at (v1){};
\node [vertex] at (v2){};
\node [vertex] at (v3){};

\node at (9,.5){$=$};

\coordinate (v3) at (9.7, 0){};
\coordinate (v2) at (9.7, .5){};
\coordinate (v1) at (9.7, 1){};
\draw[tube1] (v2)--(v3);
\draw [edge] (v1) to (v2) to (v3);
\node [vertex] at (v1){};
\node [vertex] at (v2){};
\node [vertex] at (v3){};
        \end{tikzpicture}
        \caption{}
        \label{fig:tubings}
    \end{subfigure}
    
    \caption{ \textbf{(a)}  An unweighted rooted tree  of size 9. Vertex $r$ is the root, vertices $v_2,v_4,v_6,v_7$, and $v_9$ are leaves. The out degrees are  $\od{r}=2, \od{v_2}=0, \od{v_3}=3$ etc.  One of the possible tubes is indicated in blue, consisting of $\{v_3,v_5,v_7,v_8\}$. \textbf{(b)} The two possible binary tubings of a ladder with three vertices according to \cref{def tubing}. Later, we only draw the non-trivial part and leave out the innermost and outermost tubes, as shown to the right in the equations.}
\end{figure}

Note that any two tubes in a binary tubing are either disjoint or one is contained within the other, so binary tubings have a nested structure.

Our notion of binary tubing also appears under the name of maximal nesting in \cite{ward_massey_2022}.  Note however, that the definition of nesting is based on subgraphs defined by their edge sets, so while this agrees with our definition when restricted to rooted trees, the obvious generalization of our definition to graphs (which also comes up in physics, see \cref{rem tubings in BCFW}) is different from the notion of maximal nesting on a graph as there the vertex-based and edge-based definitions differ.  See \cref{sec associahedra} for further discussion.

More standard in the literature is a notion of tubings used in defining graph associahedra \cite{carr_coxeter_2006}.  These tubings are also made up of nested families of tubes but with different conditions on them.  Other than to compare our notions with theirs and further summarize some nesting results, see \cref{sec associahedra}, we will only work with binary tubings in this paper, and so we will just use the term \emph{tubing} for binary tubings.

We can also treat our tubings recursively.  Let $t$ be a rooted tree or a plane rooted tree.  A (binary) tubing $\tau$ of $t$ can be obtained recursively as follows:
\begin{enumerate}
    \item $\tau$ contains the outer tube consisting of all vertices of $t$.
    \item Let $e=uv \in E(t)$.  Partition the vertices $V(t) = \{A|B\}$ such that $B$ is the vertices of the subtree rooted at $v$.  Said another way, $B$ induces  a connected rooted subtree with $v$ as the root and contains all of $v$'s descendants and $A$ induces a connected subgraph that contains the root of $t$.  This subgraph is itself a rooted tree and has $u$ as a leaf.  Set $A$ and $B$ to be tubes of $\tau$.
    \item Proceed likewise to construct tubings of the trees induced by $A$ and $B$.
\end{enumerate}
Every tubing of $t$ can be constructed as above.
Expanding on this in another way, to find all the tubings of a tree, for every edge in $t$, we ``break'', or cut, the edge and put the lower vertex and all of its descendants into one tube and the rest of the tree in another tube.  Then for each tube and each edge in the tube, do the same  until all of the tubes contain only one vertex.  In other words, a tubing defines a binary tree structure on the edges of the original tree based on which stage in the hierarchy of the tubes we cut the edge.  This relates to the notion of the linearized coproduct in the Connes--Kreimer Hopf algebra, compare \cref{rem tubings in BCFW,rem linearized coprod,rem graph by graph}.

\begin{remark}\label{rem tubings binary tree}
    The containment structure of any binary tubing $\tau$ of a tree $t$ gives a binary rooted tree $B$ with left and right children distinguished.   To turn a tubing $\tau$ into such a binary tree, associate a vertex to each tube of $\tau$ and let the children of a vertex corresponding to a tube of size $>1$ be the two smaller tubes which partition it, the one containing the root of the outer tube being the right child and the other the left child.  Furthermore, $B$ is a \emph{full} binary rooted tree, that is, every vertex has either 2 or 0 children since each tube is either the tube of a single vertex or is partitioned into two nonempty subtubes.  The number of leaves of $B$ is the number of tubes of $\tau$ consisting of a single vertex which is the number of vertices of $t$.   The size of a tube is the number of vertices in the subtree of $B$ rooted at the vertex corresponding to the tube. A particular example will be discussed in \cref{rem ladders trees}.
\end{remark}

\begin{lemma}\label{lem recursive tubing}
	\begin{enumerate}
		\item Let $t$ be a rooted tree or a plane rooted tree and $\tau$ a tubing of $t$.  Then if $b$ is any tube of $\tau$, the tree induced by the vertices of $b$ has a tubing given by $b$ and those tubes of $\tau$ which are properly contained in $b$.
		\item If $t$ has $n$ vertices, then any tubing of $t$ consists of exactly $2n-1$ tubes.
	\end{enumerate}
\end{lemma}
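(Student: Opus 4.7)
My plan is to prove the two parts in order, using part (1) as a tool for the induction in part (2).

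For part (1), I would simply verify the two conditions of \cref{def tubing} against the claimed tubing $\tau_b := \{b\} \cup \{c \in \tau : c \subsetneq b\}$ of the subtree induced by $b$. The first condition is immediate since $b$ itself is the outermost tube and it is by hypothesis the full vertex set of the induced subtree. For the second condition, take any tube $c \in \tau_b$ of size at least $2$. If $c = b$, then since $\tau$ is a tubing of $t$ and $|b|\geq 2$, there are two tubes of $\tau$ which partition $b$; both of these are properly contained in $b$ and so belong to $\tau_b$. If $c \subsetneq b$, then the two tubes of $\tau$ partitioning $c$ are contained in $c$, hence properly contained in $b$, and again lie in $\tau_b$. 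The only subtle point worth mentioning is that one should confirm that $b$ genuinely induces a connected subgraph—that is, a subtree—so that the word ``tubing'' applies in the recursive setting; but this is exactly the content of \cref{def tube}.

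For part (2), I would induct on the size $n$ of $t$. The base case $n = 1$ is immediate: the only tubing consists of the single tube containing the unique vertex (which simultaneously plays the role of the outermost tube and the single-vertex tube), giving $1 = 2(1)-1$ tubes. For the inductive step with $n \geq 2$, the outermost tube of $\tau$ has size $n \geq 2$, so by \cref{def tubing} it is partitioned by two tubes $b_1, b_2 \in \tau$ of sizes $n_1, n_2$ with $n_1 + n_2 = n$ and $n_i < n$. By part (1), the collections $\tau_{b_1}$ and $\tau_{b_2}$ are tubings of the subtrees induced on $b_1$ and $b_2$, respectively. Since every tube of $\tau$ other than the outer tube is contained in $b_1$ or $b_2$ (by the nesting structure of tubings noted just before the lemma), and no tube can lie in both, $\tau$ is the disjoint union of $\{V(t)\}$, $\tau_{b_1}$, and $\tau_{b_2}$. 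By the induction hypothesis, $|\tau_{b_i}| = 2n_i - 1$, so
\[
|\tau| = 1 + (2n_1 - 1) + (2n_2 - 1) = 2(n_1 + n_2) - 1 = 2n - 1.
\]

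I do not expect any serious obstacle here; the main thing to be careful about is the bookkeeping that the tubes other than the outer one are partitioned between the two halves without overlap, which is why invoking the nesting property (and thereby part (1)) at the right moment is essential.
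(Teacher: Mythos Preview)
Your proposal is correct and follows essentially the same approach as the paper: part (1) is verified directly from the definition of a binary tubing, and part (2) is proved by induction on $n$, using part (1) to split $\tau$ into the outermost tube together with the induced tubings on the two smaller subtrees and summing $1 + (2n_1 - 1) + (2n_2 - 1) = 2n - 1$. Your write-up is slightly more explicit about the nesting justification, but the argument is the same.
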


\begin{proof}
1. is immediate from \cref{def tubing}. \\
2. A tubing of a single vertex has $2\cdot 1-1=1$ tube. Use induction on the number of vertices. Let $t$ be a rooted tree or plane rooted tree with $n>1$ vertices. Then by \cref{def tubing} its outermost tube $b$ contains all of $t$. Inside of $b$ there are two other tubes, containing $n_1$ and $n_2$ vertices such that $n_1+n_2=n$. By point 1 of the present lemma, these tubes induce tubings of subtrees, which, by the induction hypothesis, have $2n_1-1$ and $2n_2-1$ tubes. Together with the outermost tube $b$, the tree $t$ has $1+2n_1-1 + 2n_2-1  = 2n-1$ tubes as claimed.
\end{proof}

\begin{remark}\label{rem tau prime and tau double prime}
\begin{enumerate}

\item In view of the last lemma, a tubing is either 
\begin{enumerate}
    \item
        the unique tubing of $t = \bullet$, or 
    \item
        determined by $(\tau', \tau'')$ where $\tau'$ is a tubing of some
        proper rooted subtree $t'$ and $\tau''$ is a tubing of $t''= t \setminus t'$.  Note that $t''$ contains the root of $t$.
\end{enumerate}

\item Tubings of weighted rooted trees and weighted plane rooted trees are simply tubings of the underlying unweighted trees.  The weights are carried along harmlessly.
\end{enumerate}
\end{remark}

\medskip

Generally given a tubing $\tau$ of the tree $t\neq \bullet$ we will write $t'$ and $t''$ for the two rooted trees induced by the tubes that partition the outermost tube of $\tau$ with the root of $t$ being in $t''$. The sub-tubings $\tau'$ and $\tau''$ denote the tubings of the subtrees $t'$ and $t''$, respectively, as in \cref{rem tau prime and tau double prime}.

\begin{remark}\label{rem tubings in BCFW}
    By \cref{rem tau prime and tau double prime}, a tubing is a graphical representation of the sequence in which all edges of a given tree $t$ are cut, until only disconnected vertices remain. In our case, the tree will be the insertion tree of a Feynman graph, and cutting this tree edge by egde can be interpreted as the linearization of the (Connes--Kreimer) coproduct to be discussed in \cref{sec algebraic set up}. A tubing represents an iteration of the linearized coproduct, see \cref{rem graph by graph}.

    A similar procedure of iteratively splitting graphs into two components is familiar from the Britto--Cachazo--Feng--Witten (BCFW) recursion relation \cite{britto_new_2005,britto_direct_2005} of scattering amplitudes. Indeed, following \cite{arkani-hamed_cosmological_2017}, a graphical notation for splitting Feynman graphs has been used which seemingly coincides with the binary tubings in the present work. 

    Despite the similarity in graphical appearance, we would like to stress the striking difference between the two concepts: In the  setting of \cite{britto_new_2005,britto_direct_2005,arkani-hamed_cosmological_2017}, the recursive splitting of a Feynman graph reflects particular analytic properties of the corresponding amplitude. Conversely, our binary tubings operate on the insertion trees, not on the Feynman graphs themselves, and ultimately arise from the combinatorics of renormalization, considering the scale dependence of the resulting amplitude. The physical interpretation will be outlined in \cref{sec physical set up}.
\end{remark}

There are a few statistics on tubings that we will need.  Write $\Tub(t)$ for the set of tubings of $t$, and let $N(t) = |\Tub(t)|$ be the number of tubings of $t$.  Following \cref{lem recursive tubing} and \cref{rem tau prime and tau double prime}, this number is determined recursively:
\begin{lemma}\label{lem recursive tubing count}
    Let $t$ be a rooted tree with edge set $E(t)$. Removing any edge $e\in E(t)$ splits $t$ into two rooted trees $t' \cup t''$. The number   of tubings of $t$ is 
\begin{align}
    N(t) = \sum_{e\in E(t)} N(t') N(t'').
\end{align}
Moreover, $N(t_1)=N(t_2)$ whenever $t_1$ and $t_2$ are two rooted trees whose underlying graphs are isomorphic and hence only differ in which of the vertices is the root.
\end{lemma}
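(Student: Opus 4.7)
The plan is to derive Part 1 almost immediately from the recursive decomposition of tubings recorded in \cref{rem tau prime and tau double prime}, and then deduce Part 2 from Part 1 by strong induction on the number of vertices.

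For Part 1, I would invoke \cref{rem tau prime and tau double prime}: any tubing $\tau$ of a tree $t$ with $|V(t)| \geq 2$ is determined by a pair $(\tau', \tau'')$ where $\tau'$ is a tubing of a proper rooted subtree $t' \subsetneq t$ and $\tau''$ is a tubing of $t'' = t \setminus t'$, with $\rootv(t) \in t''$. The key step is to exhibit a bijection between such decompositions $(t', t'')$ and the edges $e \in E(t)$: given $e = uv$ with $u$ the parent of $v$, set $t'$ equal to the rooted subtree at $v$ and $t''$ to the connected subgraph induced by $V(t) \setminus V(t')$, which is itself a rooted tree with root $\rootv(t)$. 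This bijection, together with the independence of the choices of $\tau'$ and $\tau''$, gives the recursion
\begin{equation*}
    N(t) = \sum_{e \in E(t)} N(t')\, N(t'').
\end{equation*}

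For Part 2, I would argue by strong induction on $n = |V(t)|$. The base case $n=1$ is trivial since there is a unique rooted tree on one vertex. For the inductive step, let $T$ denote the common underlying unrooted tree of $t_1$ and $t_2$, so $|V(T)| = n \geq 2$. Applying Part 1 to each rooting gives
\begin{equation*}
    N(t_i) = \sum_{e \in E(T)} N(t_i'(e))\, N(t_i''(e)), \qquad i = 1, 2,
\end{equation*}
where $t_i'(e), t_i''(e)$ denote the two rooted components obtained by cutting $e$ in $t_i$. For a fixed edge $e$, the two components of $T \setminus e$ as unrooted graphs are independent of the rooting, and each has strictly fewer than $n$ vertices. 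The inductive hypothesis then applies to each component, so $N$ of each component is determined by the underlying unrooted graph alone. Hence the summands coincide for $i=1$ and $i=2$, and $N(t_1) = N(t_2)$ follows.

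The main (mild) obstacle is bookkeeping in the bijection for Part 1: one must verify that what \cref{rem tau prime and tau double prime} calls a ``proper rooted subtree $t'$'' corresponds exactly to one of the two components of an edge deletion, with the root of $t'$ being the child endpoint of the cut edge, and that the complementary piece $t''$ is a rooted tree (not just a connected subgraph) in which the former parent endpoint becomes a leaf. Once this correspondence is set up cleanly, both parts of the lemma fall out without further calculation.
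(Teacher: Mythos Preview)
Your proposal is correct and follows essentially the same approach as the paper: both obtain the recursion by identifying the outermost bipartition of a tubing with the choice of an edge to cut (the paper cites \cref{lem recursive tubing}, you cite \cref{rem tau prime and tau double prime}, which amount to the same observation), and both establish root-independence inductively. The paper additionally remarks that Part 2 can be seen more directly from the fact that \cref{def tubing} never mentions the root, so isomorphic underlying graphs automatically have the same tubings; this shortcut bypasses the induction entirely, but your inductive argument is equally valid.
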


\begin{proof}
 By \cref{lem recursive tubing}, the remainder of $\tau$, after removing the edge $e$ which defines the bipartition of the outermost tube, is exactly a tubing of two trees $t',t''$. Summing over all possible ways to split the outermost tube, that is, over all $e\in E(t)$, reproduces the total number of tubings. Independence of the position of the root follows inductively, or can also be seen from the fact that the original definition of binary tubings does not make reference to the root.
\end{proof}

\medskip

\begin{definition}\label{def b statistic}
Let $\tau$ be a tubing of a rooted tree or plane rooted tree $t$.  Given a vertex $v$ of $t$, the \emph{b-statistic} $b(v, \tau)$ denotes the number of tubes of $\tau$ for which $v$ is the root of the tube.  Let $b(\tau) = b(r,\tau)$ where $r$ is the root of $t$.
\end{definition}

For example, in \cref{fig:tubings} we have $b(\tau)=3$ for the ladder on the left and $b(\tau)=2$ for the same ladder on the right.  For $v$ the middle vertex of the ladder we have $b(v,\tau)=1$ on the left but $b(v,\tau)=2$ on the right.  For both trees the $b$-statistic of the leaf is 1.  As we use the more condensed drawings with the outer and innermost tubes suppressed going forward, keep in mind that those tubes do still count for the $b$-statistic.

\medskip

Using the $b$-statistic we define the Mellin monomial of a tubing.

\begin{definition}\label{def Mellin monomial}
Given a tubing $\tau$ of a rooted tree or a plane rooted tree $t$ and a sequence $c_0, c_1,c_2, \ldots$ define the \emph{Mellin monomial} to be 
\[
    c(\tau) = \prod_{\genfrac{}{}{0pt}{2}{v\in V(t)}{ v \ne \rootv(t)}} c_{b(v,\tau)-1}.
\]
Given a tubing $\tau$ of a $D$-decorated rooted tree or a $D$-decorated plane rooted tree $t$ and a doubly
index sequence $(c_{i,d})_{\genfrac{}{}{0pt}{2}{0\leq i}{d \in D}}$, define the \emph{Mellin monomial} to be
\[
    c(\tau) = \prod_{\genfrac{}{}{0pt}{2}{v\in V(t)}{ v \ne \rootv(t)}} c_{b(v,\tau)-1,d(v)} 
\]
\end{definition}

In our application the sequence $c_0, c_1, \ldots$ will be the coefficient sequence of the series expansion of a Mellin transform of a regularized Feynman integral of a primitive Feynman graph, see \cref{mellin_transform}.  In the weighted case, for each fixed $j$ the sequence $c_{i,j}$ will be the coefficient sequence of the series expansion of the Mellin transform of the $j$ loop primitive Feynman graph while the decoration will index different external leg structures or other relevant properties of the primitive graphs.  From a mathematical perspective, we can view these series as simply given to us by physics; we will not use any properties of them and the $c_{i}$ and $c_{i,j}$ can be treated as indeterminates for the algebraic results.

The reader might wonder why $v=\rootv(t)$ is excluded in the products above.  This is one of those points where the requirements of the application force the situation, as to use tubings to solve Dyson--Schwinger equations we need to treat the root differently, see the first point of \cref{thm main thm}.  Note, however, that for the coefficient of $L^1$ in the first point of \cref{thm main thm} the contribution of the root is exactly as it would be if the root were not excluded from the definition of the Mellin monomial.  The coefficient of $L^1$ gives the \emph{anomalous dimension} (see \cref{G_log_expansion}), and this illustrates how the anomalous dimension is particularly elegant mathematically as well as being meaningful physically.

We are now ready to state our main theorem.    Write
\begin{align} \label{def falling factorial}
    (x)_k = x(x-1)(x-2)\cdots (x-k+1)
\end{align}
for the falling factorial. (As per the usual convention for the empty product, $(x)_0 = 1$.)
We will use the square bracket notation to specify a particular coefficient in a generating series.  For example, in a series \(A(x) = \sum_{n=0}^{\infty} a_n x^n\), \([x^n] A(x)\) denotes the coefficient \(a_n\) of \(x^n\).

\begin{theorem}
\label{thm main thm}
    The Dyson--Schwinger equation \cref{dse_differential_general} with $F_k(\rho) = \sum_{i\geq 0} c_{i,k}\rho^{i-1}$ can be solved by a sum over tubings of weighted rooted trees, where $w(v)$ denotes the weight of vertex $v$, as follows.
    \begin{enumerate}
        \item The contribution of a tubing $\tau \in \Tub(t)$ to the solution is 
        \[
            \phi_L(\tau) = c(\tau)\sum_{i=1}^{b(\tau)}c_{b(\tau)-i, w(\rootv(t))} \frac{L^i}{i!}.
        \]
        
        \item The solution to \cref{dse_differential_general} is
        \[
            G(x,L) = \sum_{t} \frac{x^{w(t)}}{|\operatorname{Aut}(t)|}\left(\prod_{v \in V(t)} (1+sw(v))_{\od{v}}\right) \sum_{\tau \in \Tub(t)}\phi_L(\tau),
        \]
        where the outer sum is over all weighted rooted trees. 
        \item In particular, the anomalous dimension is 
        \begin{align*}
               \gamma(x) =[L^1] G(x,L) = \sum_{t} \frac{x^{w(t)}}{|\operatorname{Aut}(t)|}\left(\prod_{v \in V(t)} (1+sw(v))_{\od{v}}\right) \sum_{\tau \in \Tub(t)}\prod_{ v\in V(t)} c_{b(v,\tau)-1, w(v)}.
        \end{align*} 
    \end{enumerate}
\end{theorem}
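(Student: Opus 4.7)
The plan is to show that the proposed series satisfies \cref{dse_differential_general} by direct substitution, organized as an induction on the weight of the trees contributing to the series. Grouping the trees in the series by the weight $k$ of the root, it suffices to identify the contribution of trees with root weight $k$ with the $k$th summand on the right-hand side of the DSE; the constant term $1$ will correspond to the empty contribution.

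The first key computation uses $F_k(\rho) = \sum_{i\geq 0} c_{i,k}\rho^{i-1}$ to expand the seed function:
\[
(e^{L\rho}-1)F_k(\rho) \;=\; \sum_{m\geq 0}\rho^m\sum_{i=1}^{m+1}c_{m+1-i,k}\frac{L^i}{i!},
\]
so that $\partial_\rho^{b-1}\bigl[(e^{L\rho}-1)F_k(\rho)\bigr]\big|_{\rho=0}=(b-1)!\sum_{i=1}^{b}c_{b-i,k}L^i/i!$ realizes exactly the root-dependent factor of $\phi_L(\tau)$ when $b(\tau)=b$. The second key computation uses $G(x,0)=1$ and the generalized binomial theorem to write
\[
G(x,\partial_\rho)^{1+sk} \;=\; \sum_{n\geq 0}\frac{(1+sk)_n}{n!}\,\tilde G(x,\partial_\rho)^n,
\]
with $\tilde G = G-1$; this is valid formally because $\tilde G$ has no constant term in $x$. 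The falling factorial here becomes the $(1+sw(v))_{\od{v}}$ factor for a new root $v$ of weight $k$ and outdegree $n$, while the $1/n!$ combines with the sum over ordered $n$-tuples of subtrees in $\tilde G^n$ and the individual factors $1/|\Aut(t_i)|$ to give $1/|\Aut(T)|$, where $T$ is the tree built by attaching those subtrees at the new root.

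The main combinatorial step, which I expect to be the hardest, is to establish the root-level identity: for $T$ obtained by attaching subtrees $t_1,\ldots,t_n$ with tubings $\tau_1,\ldots,\tau_n$ at a new root $r$ of weight $k$,
\[
\prod_{i=1}^n\phi_{\partial_\rho}(\tau_i)\cdot(e^{L\rho}-1)F_k(\rho)\bigg|_{\rho=0} \;=\; \sum_{\substack{\tau\in\Tub(T)\\ \tau|_{t_i}=\tau_i}}\phi_L(\tau).
\]
Each operator $\phi_{\partial_\rho}(\tau_i) = c(\tau_i)\sum_{j_i=1}^{b(\tau_i)}c_{b(\tau_i)-j_i,w(r_i)}\partial_\rho^{j_i}/j_i!$ selects an integer $j_i\in\{1,\ldots,b(\tau_i)\}$, and the product of derivatives acting on the seed yields the multinomial coefficient $\binom{\sum j_i}{j_1,\ldots,j_n}$. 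The identity reduces to a bijective claim: tubings of $T$ extending $(\tau_1,\ldots,\tau_n)$ are parameterized by choices $(j_1,\ldots,j_n)$ together with an interleaving of ``descent moves'' that assemble the chain of $r$-containing tubes of $\tau$ (advance the root-tube of some $\tau_i$ one step inward, or peel off the remaining portion of some $t_i$ entirely), and the number of interleavings is exactly the multinomial coefficient above. Under this bijection one verifies $b(r,\tau)=1+\sum j_i$ and, for the child roots, $b(r_i,\tau)=b(\tau_i)-j_i+1$; the latter is precisely what makes the factor $c_{b(\tau_i)-j_i,w(r_i)}$ supplied by the operator coincide with the $c$-contribution of $r_i$ in $c(\tau)$. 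Once this bijection and the associated index identities are set up, all factors line up, and summing over tubings of the $t_i$, over the subtrees themselves, over $n$, and over $k$ closes the induction. Part 1 of the theorem is then simply the definition of the contribution $\phi_L(\tau)$ used throughout, and part 3 follows from part 2 by extracting $[L^1]$, which retains only the $i=1$ summand of $\phi_L(\tau)$ and allows the root factor $c_{b(\tau)-1,w(r)}$ to be absorbed into the $c$-product over all vertices.
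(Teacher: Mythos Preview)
Your proposal is correct and takes a genuinely different route from the paper. The paper factors the problem through the Connes--Kreimer Hopf algebra: it proves (\cref{phi formula dec}) that the tubing formula defines the unique bialgebra morphism $\phi\colon \CK(D)\to K[L]$ intertwining $B_+^{(d)}$ with the cocycle $\Lambda^{(d)}$, by showing $\phi=\exp_*(L\sigma)$ and computing the convolution powers $\sigma^{*k}$ via the one-step recursion $\tau\leftrightarrow(\tau',\tau'')$ of \cref{rem tau prime and tau double prime}; \cref{thm main thm} then follows by applying $\phi$ to the solution of the combinatorial DSE from \cref{thm multi primitive comb dse}. You instead verify the analytic DSE directly, via a single combinatorial bijection at the root: tubings of $T=B_+^{(k)}(t_1\cdots t_n)$ correspond to tubings $\tau_i$ of the $t_i$, choices $j_i\in\{1,\ldots,b(\tau_i)\}$, and a shuffle counted by the multinomial. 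This bijection is essentially the iteration of the paper's one-step decomposition $b(\tau)-1$ times along the chain of root-containing tubes, followed by collecting the resulting pieces by which $t_i$ they lie in; the identities $b(r,\tau)=1+\sum j_i$ and $b(r_i,\tau)=b(\tau_i)-j_i+1$ then make all factors line up exactly as you say. Your route is more elementary and self-contained---it bypasses the universal property, infinitesimal characters, and the convolution exponential---while the paper's approach buys the explicit identification of $[L^k]\phi(t)$ with $\sigma^{*k}(t)/k!$, which is the Hopf-algebraic form of the renormalization group recursion \cref{gammak_rge} and is what underlies the graph-by-graph matching in \cref{rem graph by graph}.
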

Note that in the case of only one Mellin transform, which we sometimes call the \emph{unweighted} case, the theorem also applies, but we suppress the second index of the $c_{i,j}$ as it is redundant and so as to keep the notation in accord with \cref{def Mellin monomial}. 
The proof of \cref{thm main thm} will be given in \cref{sec single equation}.
Systems can be solved similarly but with more notation and bookkeeping required, see \cref{sec systems}.

Furthermore, as a consequence of \cref{thm multi primitive comb dse} the solution to \cref{dse_differential_general} can also be written as a sum over weighted plane rooted trees.  This removes the need for dividing by the size of the automorphism group and replaces the falling factorial with binomials, specifically,
\[
   G(x,L) = \sum_{\genfrac{}{}{0pt}{2}{t \text{ weighted plane }}{\text{rooted tree}}} x^{w(t)}\left(\prod_{v \in V(t)} \binom{1+sw(v)}{\od{v}}\right) \sum_{\tau \in \Tub(t)}\phi_L(\tau).
\]
This form will be more convenient for relating to chord diagram expansions in \cref{sec chord}.

Furthermore, we have the additional property that the tubings not only give one way of indexing a solution to the Dyson--Schwinger equation, but they refine the solution as a sum over Feynman graphs, see \cref{rem graph by graph}.

\begin{example} 
Since the core of our main theorem is the contribution of each tree, let us consider the contributions of some explicit trees. Here, we restrict ourselves to unweighted trees, in \cref{thm main thm} this means that there is only one $F_k(\rho)$, namely $k=1$, and the second index in $c_{i,k}$ is suppressed,  $c_{i,1}=c_i$.    We will write 
$$
\phi_L(t) := \sum_{\tau \in \Tub(t)}\phi_L(\tau)
$$

The single vertex tree has only a single tubing consisting of a single tube, so the $b$-statistic (\cref{def b statistic}) is 1 and the Mellin monomial (\cref{def Mellin monomial}) is $1$, since it is an empty product. By \cref{thm main thm},  
\[
    \phi_L( \rtLine 1 ) = c_0 L.
\]
The two vertex tree (ladder) also has only a single tubing, see \cref{fig:small_example} b), with Mellin monomial $c(\tau) = c_0$. The $b$-statistic for the root is $2$ and for the other vertex (the leaf) it is $1$, therefore, by  \cref{thm main thm},
\[
\phi_L\left( \rtLine 2  \right) = c_0\left(\frac 12 c_0 L^2 + c_1 L\right).
\]
The two trees with three vertices have two tubings each.  For $\rtV$ both tubings give the same contribution, see \cref{fig:small_example} c), leading to an overall factor 2 in the amplitude
\[
\phi_L \left( \rtV  \right) = 2\cdot c_0^2\left(\frac 16 c_0 L^3 + \frac 12 c_1 L^2 + c_2 L\right) =  \frac 13 c_0^3 L^3 + c_0^2 c_1 L^2 +2 c_2 c_0^2 L.
\]
For the second tree on three vertices, the two tubings have different contributions since the $b$-statistics differ, see \cref{fig:small_example} d), 
\begin{align*}
\phi_L \left( \rtLine 3  \right) &= c_1c_0\left(\frac 12 c_0 L^2 + c_1 L\right) +c_0^2\left(\frac 16 c_0 L^3 + \frac 12 c_1 L^2 + c_2 L\right)  \\
& = \frac 16 c_0^3 L^3  +     c_1 c_0^2  L^2  +\left( c_1^2 c_0 + c_2 c_0^2\right) L.
\end{align*}
All these examples reproduce the known tree Feynman rules for unweighted trees discussed in \cite{panzer_hopf_2012} or \cite{dugan_sequences_2023}.  

\end{example}

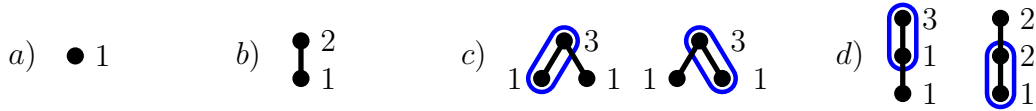
\begin{figure}[htb]
	\centering
	\begin{tikzpicture}

        \node at (-6,.5){$a)$};
        \coordinate (v1) at (-5.3,.5){};
        \node [vertex, label={right:1}] at (v1){};

        \node at (-3,.5){$b)$};
        \coordinate (v1) at (-2.3,.7){};
        \coordinate (v2) at (-2.3,.2){};
        \draw [edge] (v1) to (v2) ;
        \node [vertex, label={right:2}] at (v1){};
        \node [vertex, label={right:1}] at (v2){};

        \node at (0,.5){$c)$};
        \coordinate (v1) at (1.2,.7){};
        \coordinate (v2) at ($(v1)+ (-.3,-.5)$){};
        \coordinate (v3) at ($(v1)+ (.3,-.5)$){};
        \draw[tube1] (v1)--(v2);
        \draw [edge] (v2) to (v1) to (v3);
        \node [vertex,  label={right:3}] at (v1){};
        \node [vertex, label={left:1}] at (v2){};
        \node [vertex, label={right:1}] at (v3){};

        \coordinate (v1) at (3,. 7){};
        \coordinate (v2) at ($(v1)+ (-.3,-.5)$) {};
        \coordinate (v3) at ($(v1)+ (.3,-.5)$){};
        \draw[tube1] (v1)--(v3);
        \draw [edge] (v2) to (v1) to (v3);
        \node [vertex, label={[label distance=1.5mm]right:3}] at (v1){};
        \node [vertex, label={left:1}] at (v2){};
        \node [vertex, label={[label distance=1.5mm]right:1}] at (v3){};

        \node at (5,.5){$d)$};
        \coordinate (v3) at (5.7,0){};
        \coordinate (v2) at (5.7,.5){};
        \coordinate (v1) at (5.7,1){};
        \draw[tube1] (v1)--(v2);
        \draw [edge] (v1) to (v2) to (v3);
        \node [vertex, label={right:3}] at (v1){};
        \node [vertex, label={right:1}] at (v2){};
        \node [vertex, label={right:1}] at (v3){};

        \coordinate (v3) at (7, 0){};
        \coordinate (v2) at (7, .5){};
        \coordinate (v1) at (7, 1){};
        \draw[tube1] (v2)--(v3);
        \draw [edge] (v1) to (v2) to (v3);
        \node [vertex, label={right:2}] at (v1){};
        \node [vertex, label={right:2}] at (v2){};
        \node [vertex, label={right:1}] at (v3){};
	\end{tikzpicture}
	\caption{Some small trees, and the $b$-statistic (\cref{def b statistic}) of the vertices for the indicated tubing. Recall that we do not draw the innermost tubes around the individual vertices, nor the outermost tube which contains the complete tree.}
	\label{fig:small_example}
\end{figure}

\subsection{Physical set up}
\label{sec physical set up}

The results of this paper can be viewed purely from a combinatorial perspective, but the Dyson--Schwinger equation in \cref{dse_differential_general} originates from quantum field theory. In the remainder of \cref{sec definitions}, we review the physical set-up, introduce the necessary algebraic background, and finally bring the two together to understand the Dyson--Schwinger equations which underpin our approach.  A reader unfamiliar with the physics may read lightly: the precise algebraic notions will be introduced in \cref{sec algebraic set up}.

The goal of perturbative quantum field theory is to compute Feynman amplitudes, which can be understood as $n$-point correlation functions of quantum fields. A $n$-point Feynman amplitude $G^{(n)}$ is a function of the masses of all particles involved and of $n-1$ momenta, each of which is a vector in Minkowski space. The last momentum is fixed by overall momentum conservation. 

We introduce an arbitrary, but fixed, reference momentum $\mu$. This momentum defines a scale.  Namely, let $q$ be a non-vanishing linear combination of the arguments of $G^{(n)}$, then we define the logarithmic energy scale of the $G^{(n)}$ to be
\begin{align}\label{def L}
    L= \ln \left(\frac{q^2}{\mu^2} \right).
\end{align}
All remaining arguments of $G^{(n)}$ can be expressed as scale-free \enquote{angles} by rescaling them to $q^2$; see \cite{brown_angles_2013,brown_decomposing_2012} for details. Note that $L$ is not necessarily the   \enquote{energy-dependence} of the amplitude: a change in $L$, with all angles fixed, amounts to a scaling of all momenta and all masses by the same factor, whereas \enquote{energy-dependence} typically means to scale the momenta but leave masses fixed. 

For the remainder of this paper, we will restrict ourselves to one-particle irreducible propagator-type  Green functions, denoted $G(x,L)$, where $x$ is the renormalized coupling and $L$ is the logarithmic scale (see \cref{def L}). The restriction to one-particle irreducible is standard and loses no generality by the invertiblity of log and the Legendre transform \cite{jackson_robust_2017}.  The restriction to propagator-type insertions and one scale $L$ does decrease generality. In this setting, all angles come from masses. We restrict ourselves further by assuming that all angles are kept fixed and $L$ is the only kinematic variable.  The order-$x^n$ contribution, $[x^n]G(x,L)$,  is given by Feynman graphs with $n$ loops (recall that in physics terminology, the \emph{loop order} amounts to the dimension of cycle space of the Feynman graph) and exactly two external edges.

We use kinematic renormalization conditions with renormalization point $\mu^2$. Each renormalized $n$-loop Feynman graph evaluates to a polynomial of degree at most $n$ in $L$ with no constant term. We project the Green function onto its tree-level tensor such that the leading term is scaled to unity. Therefore, the Green functions considered in this paper have an expansion in the scale variable $L$ \cref{def L} of the form 
\begin{align}\label{G_log_expansion}
    G(x,L) = 1 + \sum_{k=1}^\infty \gamma_k(x) L^k ,
\end{align}
where the function $\gamma_1(x)$, often simply denoted $\gamma(x)$, is called \emph{anomalous dimension}.
The Green function $G(x,L)$ is determined by a Dyson--Schwinger equation. Before we discuss the precise form of this equation in our physical application, we will first review some algebraic background and terminology which we will need later.

For the results of this paper, we will be working in the context of rooted trees, not Feynman graphs, so it is worth sketching the relation between them.  The key structure here is that of divergent subgraphs inside one-particle irreducible Feynman graphs.  These are the subgraphs whose Feynman integral is divergent before renormalization.  If the divergent subgraphs sit inside their parent graph in a tree structure,  this tree structure is the \emph{insertion tree} of the Feynman graph.

\begin{definition}[\cite{kreimer_overlapping_1999}]\label{def insertion tree}
    An \emph{insertion tree} $t$ is a rooted tree with each vertex labelled with a primitive divergent Feynman graph and edges labelled to mark the insertion place.  This is a decorated rooted tree where the decoration at each vertex is given by the graph labelling the vertex along with the insertion place labelling the edge above the vertex.
\end{definition}

For a particular example see \cref{eg yukawa insertion trees}. In the case of overlapping subdivergences, a single Feynman graph gives rise to multiple insertion trees, each of which gives a choice of how to build the parent graph by insertion.
The labelled insertion trees are in bijection with the original Feynman graphs. Without labelling, the map is generally not bijective as different Feynman graphs can give rise to the same shape of insertion tree.

\subsection{Algebraic set up} \label{sec algebraic set up}

The algebraic underpinnings of our results are Hopf algebraic.
We work over some commutative $\mathbb Q$-algebra $K$, though $\mathbb{R}$ is sufficient for the physical application. The \textit{Connes--Kreimer Hopf algebra} $\CK$
is the free commutative $K$-algebra generated by rooted trees\footnote{For now, these are unlabelled and undecorated trees. We introduce decorations later in this section.}. We identify a product of rooted trees with the forest having these trees as its components; thus forests give a basis of $\CK$. (In particular, the multiplicative unit 1 corresponds to the empty forest.)  We equip this with a
coproduct \cite{kreimer_overlapping_1999,connes_hopf_1998,connes_hopf_1999} defined on trees as
\begin{equation} \label{eqn ck coproduct}
    \Delta t = \sum_f f \otimes (t \setminus f)
\end{equation}
where $f$ ranges over rooted subforests (disjoint unions of rooted subtrees) of $t$, and $t\setminus f$ is the remainder of $t$ where $f$ has been removed.

\begin{remark}\label{rem linearized coprod}
Observe that in the case where $f$
has a single component, \cref{eqn ck coproduct} is the same kind of decomposition as in the definition of binary tubings (\cref{def tubing}).  If we take the coproduct and then project onto the vector space inside $\CK$ given by the span of single rooted trees, this is known as the \emph{linearized coproduct}.  From the recursive characterization of tubings, we see that the tubings of a tree $t$ correspond to  the ways to iterate the linearized coproduct   until $\bullet\otimes \bullet\otimes \cdots \otimes \bullet$ is obtained.
\end{remark}

Let $B_+$ be the linear operator on $\CK$ which sends a forest to the tree obtained by adding a root and attaching this new root to the root of each component, as in our original recursive definition of rooted trees (\cref{def rooted tree}). This operator satisfies
\begin{equation}\label{coprod B+}
    \Delta B_+ = B_+ \otimes 1 + (\id \otimes B_+) \Delta.
\end{equation}
Any tree can be uniquely written as $B_+$ applied to a forest, so  \cref{coprod B+} could also be taken as a recursive definition of the coproduct \cref{eqn ck coproduct}. In general, an operator on any bialgebra which satisfies this identity is a \textit{Hochschild 1-cocycle}.

\begin{theorem}[Universal property {\cite[Sec.~3, Thm.~2]{connes_hopf_1999}}] \label{ck universal property}
    Let $A$ be a commutative algebra and $\Lambda$ be an operator on $A$. There exists a unique
    algebra morphism $\phi\colon \CK \to A$ such that $\phi B_+ = \Lambda \phi$. Moreover, if $A$ is
    a bialgebra and $\Lambda$ is a Hochschild 1-cocycle then $\phi$ is a bialgebra morphism.
\end{theorem}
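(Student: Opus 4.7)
The plan is to proceed by a standard recursive construction on rooted trees, leveraging that every rooted tree is uniquely $B_+$ applied to a forest of strictly smaller total size.

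For existence and uniqueness of the algebra morphism, I would define $\phi$ inductively on the number of vertices: set $\phi(1) = 1_A$, and for each tree $t = B_+(F)$, set $\phi(t) = \Lambda(\phi(F))$. Extend multiplicatively to forests and then linearly to all of $\CK$; this is unambiguous since $\CK$ is the free commutative algebra on trees. The intertwining relation $\phi B_+ = \Lambda \phi$ holds on forests (hence everywhere) by construction. Uniqueness is immediate: any algebra morphism satisfying $\phi B_+ = \Lambda \phi$ is forced onto exactly these values on trees, and then determined on all of $\CK$ by multiplicativity.

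For the bialgebra statement, I would show coproduct compatibility $\Delta_A \phi = (\phi \otimes \phi)\Delta$ by induction on tree size and extend to forests by multiplicativity. The base case $t = \bullet = B_+(1)$ is a direct check. For the inductive step, write $t = B_+(F)$. On one side, the cocycle property of $\Lambda$ gives
\[
    \Delta_A \phi(t) = \Delta_A \Lambda \phi(F) = \Lambda\phi(F) \otimes 1_A + (\id \otimes \Lambda)\Delta_A \phi(F),
\]
and the induction hypothesis applied to $\phi(F)$ yields $\Delta_A \phi(F) = (\phi \otimes \phi)\Delta(F)$. On the other side, \cref{coprod B+} applied to $B_+$ in $\CK$ gives
\[
    (\phi \otimes \phi)\Delta(t) = \phi(t) \otimes 1_A + (\phi \otimes \phi B_+)\Delta(F) = \phi(t) \otimes 1_A + (\phi \otimes \Lambda \phi)\Delta(F),
\]
and these two expressions agree. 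For counit compatibility, applying $\epsilon_A \otimes \id$ to the cocycle identity forces $\epsilon_A \Lambda = 0$, so $\epsilon_A \phi$ vanishes on every nonempty tree and thus coincides with $\epsilon$.

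The only point needing care is that the induction hypothesis is being applied to a forest rather than a single tree; this is justified because both $\Delta_A$ and $(\phi \otimes \phi)\Delta$ are algebra morphisms, so once the identity holds on the tree generators it extends multiplicatively to all forests. I do not expect any serious obstacle: the 1-cocycle identity for $\Lambda$ mirrors \cref{coprod B+} for $B_+$, and this parallelism is precisely what makes the inductive step close.
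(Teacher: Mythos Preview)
Your proof is correct and follows the standard recursive argument. Note, however, that the paper does not actually supply its own proof of this theorem: it is stated with a citation to Connes--Kreimer \cite[Sec.~3, Thm.~2]{connes_hopf_1999} and used as background. Your argument is essentially the one found there (and in the subsequent literature), so there is nothing to compare against in the paper itself.
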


For any $s \in K$, we consider the \textit{combinatorial Dyson--Schwinger equation} (DSE)
\begin{equation} \label{dse algebraic}
    T(x) = 1 + xB_+\big(T(x)^{1+s} \big).
\end{equation}
We will see in \cref{sec mellin transform} what these Dyson--Schwinger equations mean in quantum field theory.  For now, they are simply defining equations for certain special classes of simple trees.

\begin{lemma}\label{lem cdse sols}
The combinatorial Dyson--Schwinger equation $T(x) = 1 + xB_+(T(x)^{1+s})$ has a unique power series solution $T(x) \in \CK[[x]]$. Explicitly, the solution is given as a sum over all rooted trees 
\begin{align}
    T(x) &= 1 + \sum_t \left(\prod_{v \in V(t)} (1 + s)_{\od v}\right) \frac{tx^{|t|}}{|\operatorname{Aut}(t)|} \label{rt solution 1}
    \\
    &= 1 + \sum_t \operatorname{pe}(t)\left(\prod_{v \in V(t)} \binom{1+s}{\od v}\right) tx^{|t|}, \label{rt solution 2}
\end{align}
where $\operatorname{pe}(t)$ is the number of plane embeddings and $(1 + s)_{\od v}$ is the falling factorial (see \cref{def falling factorial}). 
\end{lemma}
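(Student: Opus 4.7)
The plan is to establish existence and uniqueness by a degree-by-degree recursion, then verify the two explicit formulas by induction on the size of the tree.

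First I would note that since $B_+$ raises the minimum degree, the factor of $x$ in $xB_+(T(x)^{1+s})$ means that $[x^n]\, xB_+(T^{1+s})$ depends only on $[x^m]T$ for $m<n$. This gives a well-defined recursion determining $T(x)$ uniquely in $\CK[[x]]$ starting from $T_0=1$. To make sense of $T^{1+s}$ for non-integer $s$, I would write $T=1+U$ where $U\in x\CK[[x]]$ and use the binomial series $T^{1+s} = \sum_{k\geq 0}\binom{1+s}{k}U^k$, which converges in $\CK[[x]]$.

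Next I would match coefficients tree by tree. Every rooted tree $t$ can be written uniquely as $t=B_+(t_1^{m_1}t_2^{m_2}\cdots)$, where $t_1,t_2,\ldots$ are the distinct isomorphism types of rooted subtrees hanging off the root and $k=m_1+m_2+\cdots = \od{\rootv(t)}$. Writing $T(x)=1+\sum_t a_t\, t\, x^{|t|}$ with unknown coefficients $a_t$, I would expand $(T-1)^k$ as a sum over ordered $k$-tuples of trees; the coefficient of the commutative monomial $t_1^{m_1}t_2^{m_2}\cdots$ in $(T-1)^k$ is $\frac{k!}{m_1!\,m_2!\cdots}\prod_j a_{t_j}^{m_j}$. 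Combining with $\binom{1+s}{k}$ and using $\binom{1+s}{k}\,k! = (1+s)_k$, the equation $T=1+xB_+(T^{1+s})$ yields the recursion
\[
    a_t = \frac{(1+s)_k}{\prod_j m_j!}\,\prod_j a_{t_j}^{m_j}.
\]

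To obtain \cref{rt solution 1} I would induct on $|t|$, using the standard identity
\[
    |\Aut(t)| = \prod_j m_j!\cdot\prod_j |\Aut(t_j)|^{m_j},
\]
which reflects that a root-fixing automorphism of $t$ permutes each group of isomorphic branches and acts independently on each branch. Plugging the induction hypothesis $a_{t_j}=\frac{1}{|\Aut(t_j)|}\prod_{v\in V(t_j)}(1+s)_{\od v}$ into the recursion and multiplying by $(1+s)_k = (1+s)_{\od{\rootv(t)}}$ collapses to $a_t=\frac{1}{|\Aut(t)|}\prod_{v\in V(t)}(1+s)_{\od v}$. Formula \cref{rt solution 2} then follows immediately from \cref{rt solution 1} via the two combinatorial identities $\operatorname{pe}(t) = \prod_v\od v!\,/\,|\Aut(t)|$ and $(1+s)_{\od v}/\od v! = \binom{1+s}{\od v}$ applied at every vertex.

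The main obstacle is purely bookkeeping: correctly tracking the interplay between the multinomial coefficient arising from the commutative product expansion, the falling factorial from the binomial series, and the wreath-product structure of $\Aut(t)$. Once the recursion is set up, the induction step is a short symbolic manipulation.
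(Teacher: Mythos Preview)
Your proof is correct. The paper itself does not give a self-contained proof of this lemma: it cites Foissy and Bergbauer--Kreimer and defers to the more general \cref{thm cdse system sols}. The argument of \cref{thm cdse system sols} is essentially the same computation you do, only run in the verification direction: there one writes down the candidate series, expands $T^{p}=\sum_{r}\binom{p}{r}(T-1)^r$ as a sum over forests weighted by $(p)_{\kappa(f)}/|\Aut(f)|$, and checks the DSE. Your recursion $a_t=(1+s)_k/(\prod_j m_j!)\prod_j a_{t_j}^{m_j}$ together with the wreath-product identity $|\Aut(t)|=\prod_j m_j!\,|\Aut(t_j)|^{m_j}$ is exactly this computation specialised to one equation and one primitive, with the induction replacing the direct verification. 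The passage from \cref{rt solution 1} to \cref{rt solution 2} via $\operatorname{pe}(t)=\prod_v\od v!/|\Aut(t)|$ is the ``elementary group theory relating the plane and non-plane cases'' that the paper alludes to.
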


 This lemma is a consequence of Proposition 2 of \cite{foissy_faa_2008} or Lemma 4 of \cite{bergbauer_hopf_2006} along with elementary group theory relating the plane and non-plane cases. It can also be derived from the more general version we will later prove as \cref{thm cdse system sols}.

Let us observe that while the sums are over all rooted trees, for some values of $s$ not all rooted trees have nonzero coefficients.  If $s\geq -1$ is an integer, then for $\od v > s+1$ the falling factorial is 0. In that case, the solution to the combinatorial Dyson--Schwinger equation is a sum over $(s+1)$-ary rooted trees. For noninteger $s$, and for integer $s<-1$, the falling factorials will all be nonzero and all trees will appear in the sum.

Note also that \cref{rt solution 2} can be interpreted as a sum over plane rooted trees upon removing $\operatorname{pe}(t)$.

\begin{example} \label{ex dse trees trivial}
   A trivial example of \cref{dse algebraic} is the choice $s=-1$. In this case, the DSE is not recursive. The equation -- and at the same time its solution -- read
    \begin{align*}
        T(x) = 1+x B_+(1),
    \end{align*}
    where $B_+(1)=\rtLine 1$ is the unique rooted tree consisting of only a single vertex. 
\end{example}

\begin{example} \label{ex dse trees ladder}
    The simplest non-trivial DSE is the choice $s=0$ in \cref{dse algebraic}, called \emph{linear DSE} because the argument of $B_+$ is a linear function of $T(x)$. In this case, the solution \cref{rt solution 1} is the sum over \emph{ladder} trees -- trees where each vertex has at most one child.  Let $\ell_n$ denote the ladder on $n$ vertices.
    \[
    \ell_0 = 1, \qquad \ell_1 = \rtLine 1, \qquad \ell_2 = \rtLine 2, \qquad \ell_3 = \rtLine 3, \;\;\ldots
    \]
    In this case $(1+s)_{\od v} =1$ and  $|\operatorname{Aut}(t)| =1$ so we find
    \begin{align*}
        T(x) &= 1 + x \rtLine 1 + x^2 \rtLine 2 + x^3 \rtLine 3 + \cdots  = \sum_{j\geq 0} x^j \ell_j.
    \end{align*}
    We will return to the ladders in \cref{sec ladders}.
\end{example}

\begin{example}\label{ex trees negative s}
A particularly important example of \cref{dse algebraic} is the case when $s=-2$.
The equation generates the class of plane rooted trees (or non-plane rooted trees with multiplicities giving the number of plane embeddings).  The equation becomes 
\[
T(x) = 1+xB_+\left(\frac{1}{T(x)}\right).
\]
Observe that the series $T(x)$ starts with a constant 1, so the fraction is actually well-defined and can be expanded in a geometric series  
\[
T(x) =1+ xB_+\left(\sum_{k\geq 0} (-1)^k \left( T(x)-1\right) ^k\right).
\]
The latter equation has the intuitive interpretation that the subtrees at each vertex are a list of nonempty trees; $T(x)$ generates plane rooted trees.  The sign is a minor irritant, keeping track of the parity of the number of children at each vertex.  Avoiding this sign is one reason for different sign conventions appearing in the set-up in some sources, but it causes nuisance signs in other places.

Let us compare this to \cref{lem cdse sols}.  Using \cref{rt solution 2} we see that the coefficient for a given plane tree when $s=-2$ should be $\prod_{v\in V(t)} \binom{-1}{\od v}$, but $\binom{-1}{k} = (-1)^k$, so the coefficient of the tree is $\prod_{v\in V(t)}(-1)^{\od v}$ as discussed above.
A physical realization of the combinatorial DSE for $s=-2$ is the Yukawa propagator, introduced in \cref{eg yukawa 1}.

\end{example}

\begin{remark}
Similarly to the above commutative Connes--Kreimer Hopf algebra $\CK$, one can use plane rooted trees (\cref{def rooted tree}) to construct a noncommutative Connes--Kreimer Hopf algebra. To this end, one changes the commutative product of trees into a noncommutative product of plane trees and otherwise carries the plane information along in an obvious way \cite{foissy_algebres_2002, han_new_2010}.  In the present paper, we only use the commutative Hopf algebra $\CK$, but we could reformulate to work on the noncommutative Connes--Kreimer Hopf algebra and so directly use the plane rooted trees from the beginning that we only pass to for the connection to chord diagrams in \cref{sec bijection tubings chord diagrams}.
\end{remark}

\medskip

The Hopf algebra $\CK$ introduced so far is not yet sufficiently general to capture a Dyson--Schwinger equation of type \cref{dse_differential_general}, because it contains only a single cocycle $B_+$. We generalize it to a family of Hopf algebras $\CK(D)$, where the elements of $D$ are called \emph{decorations}. Concretely, let $\CK(D)$ be the free commutative $K$-algebra generated by trees with vertices decorated by elements of $D$. We will chiefly be interested in the case $D = \ZZ_{\ge 1}$, which can be interpreted as weighted rooted trees. We equip $\CK(D)$ with the same coproduct as that of $\CK$ in the sense that we sum over the same subtrees and simply carry along the vertex decorations. Then for each $d \in D$ there is a corresponding 1-cocycle $B_+^{(d)}$ that adds a root with decoration $d$. This Hopf algebra satisfies a universal property which generalizes of \cref{ck universal property}.

\begin{theorem}[{\cite[Theorem 31]{foissy_algebres_2002}}] \label{thm ck universal property dec}
    Let $A$ be a commutative algebra and $\{\Lambda^{(d)}\}_{d \in D}$ be a family of operators on
    $A$.  There exists a unique algebra morphism $\phi\colon \CK(D) \to A$ such that $\phi B_+^{(d)}
    = \Lambda^{(d)} \phi$. Moreover, if $A$ is a bialgebra and $\Lambda^{(d)}$ is a Hochschild
    1-cocycle for each $d$ then $\phi$ is a bialgebra morphism.
\end{theorem}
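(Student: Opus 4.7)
The plan is to imitate the proof of the single-cocycle version (\cref{ck universal property}), simply carrying the decoration label along through each step. Since every nonempty rooted tree decorated by $D$ is uniquely of the form $B_+^{(d)}(f)$, where $d$ is the decoration of the root and $f$ is the forest of branches hanging below the root, one has a natural recursion on the number of vertices. The approach has three parts: construct $\phi$ and verify the intertwining relation; prove uniqueness; and under the extra hypotheses verify compatibility with the coproduct and counit.

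First, I would define $\phi$ on the generators by recursion on the number of vertices. Set $\phi(1) = 1_A$, and for a decorated tree $t = B_+^{(d)}(f)$ define $\phi(t) = \Lambda^{(d)}(\phi(f))$, using the recursively defined value of $\phi$ on the strictly smaller forest $f$. Then extend multiplicatively to all of $\CK(D)$ by $\phi(t_1 \cdots t_n) = \phi(t_1) \cdots \phi(t_n)$; this is unambiguous because $A$ is commutative and the forest monomial basis of $\CK(D)$ is indexed by multisets of decorated trees. By construction $\phi$ is an algebra morphism and $\phi B_+^{(d)} = \Lambda^{(d)} \phi$ holds on forests (hence on all of $\CK(D)$): for any forest $f$, $B_+^{(d)}(f)$ is a tree whose image is $\Lambda^{(d)}(\phi(f))$ by the defining recursion. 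Uniqueness then follows by induction on the number of vertices of a forest: any algebra morphism $\psi$ with $\psi B_+^{(d)} = \Lambda^{(d)} \psi$ is forced to satisfy the same recursion and so agrees with $\phi$ on every forest, and by multiplicativity on all of $\CK(D)$.

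For the bialgebra statement, assume $A$ is a bialgebra and each $\Lambda^{(d)}$ is a Hochschild 1-cocycle, meaning
\[
    \Delta_A \Lambda^{(d)} = \Lambda^{(d)} \otimes 1 + (\id \otimes \Lambda^{(d)}) \Delta_A.
\]
I would show by induction on $|t|$ that $\Delta_A \phi(t) = (\phi \otimes \phi) \Delta(t)$ for each decorated tree $t$, then extend to forests by multiplicativity of $\Delta$ and $\Delta_A$. The base case $t = 1$ is immediate. In the induction step, write $t = B_+^{(d)}(f)$ and note that the decorated analogue of \cref{coprod B+}, proved by the same argument that gives \cref{coprod B+}, asserts $\Delta B_+^{(d)} = B_+^{(d)} \otimes 1 + (\id \otimes B_+^{(d)}) \Delta$. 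Then
\begin{align*}
\Delta_A \phi(t) &= \Delta_A \Lambda^{(d)}(\phi(f)) \\
&= \Lambda^{(d)}(\phi(f)) \otimes 1 + (\id \otimes \Lambda^{(d)}) \Delta_A \phi(f) \\
&= \phi(t) \otimes 1 + (\id \otimes \Lambda^{(d)})(\phi \otimes \phi) \Delta(f) \\
&= (\phi \otimes \phi)\bigl(B_+^{(d)}(f) \otimes 1 + (\id \otimes B_+^{(d)}) \Delta(f)\bigr) \\
&= (\phi \otimes \phi) \Delta(t),
\end{align*}
using the inductive hypothesis on $f$ (which has fewer vertices than $t$) and the identity $\phi B_+^{(d)} = \Lambda^{(d)} \phi$. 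Counit compatibility $\varepsilon_A \phi = \varepsilon$ is handled similarly: one applies $\varepsilon_A \otimes \id$ to the cocycle identity to deduce $\varepsilon_A \Lambda^{(d)} = 0$, and then $\varepsilon_A \phi(t) = 0 = \varepsilon(t)$ on any nonempty tree by induction, while $\varepsilon_A \phi(1) = 1 = \varepsilon(1)$.

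The main technical obstacle is the bialgebra step, and specifically getting the tensor-leg bookkeeping right in the induction: the cocycle identity must be applied to the second factor of $\Delta_A \phi(f)$, so one needs the inductive hypothesis before invoking the cocycle property, and must identify $(\id \otimes B_+^{(d)}) \Delta(f)$ on the $\CK(D)$ side with $(\id \otimes \Lambda^{(d)})(\phi \otimes \phi) \Delta(f)$ on the $A$ side via $\phi B_+^{(d)} = \Lambda^{(d)} \phi$. Once this calculation is in place, uniqueness of the algebra morphism already proven in the first part immediately gives uniqueness in the bialgebra category as well, so no extra argument is needed for that half.
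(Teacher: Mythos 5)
The paper does not prove this statement; it is quoted directly from Foissy (Theorem 31 of the cited reference), just as the undecorated version (\cref{ck universal property}) is quoted from Connes--Kreimer. Your argument is correct and is the standard proof: the recursion $\phi(B_+^{(d)}(f)) = \Lambda^{(d)}(\phi(f))$ on the unique root-decomposition of a tree, multiplicative extension using freeness of $\CK(D)$ on decorated trees, induction for uniqueness, and the cocycle identity paired with the decorated analogue of \cref{coprod B+} for the bialgebra statement, including the deduction $\varepsilon_A \Lambda^{(d)} = 0$ for counit compatibility. This matches the argument in the cited source, so there is nothing to flag.
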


The generalization of \cref{dse algebraic} to the decorated Hopf algebra $\CK(\ZZ_{\geq 1})$ is the \emph{multi-primitive combinatorial DSE}
\begin{equation} \label{dse algebraic general}
    T(x) = 1 + \sum_{k \ge 1} x^k B_+^{(k)} \big(T(x)^{1 + sk} \big).
\end{equation}

\begin{lemma} \label{thm multi primitive comb dse}
The multi-primitive combinatorial DSE \cref{dse algebraic general} has the unique solution
\begin{equation*}
    T(x) = 1 + \sum_t \left(\prod_{v \in V(t)} (1 + sw(v))_{\od v}\right)
    \frac{tx^{w(t)}}{|\operatorname{Aut}(t)|},
\end{equation*}
where $t$ now ranges over weighted rooted trees and the vertex weight $w(v)$  is defined before \cref{def tube}.
\end{lemma}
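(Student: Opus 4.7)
The plan is to prove the lemma in two steps: establishing uniqueness first and then verifying that the proposed formula is a solution.

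\textbf{Uniqueness.} Write $T(x) = 1 + \sum_{n \ge 1} T_n x^n$ with $T_n \in \CK(\ZZ_{\ge 1})$. Substituting into \cref{dse algebraic general}, the coefficient of $x^n$ on the right-hand side is
\[
    [x^n] \sum_{k \ge 1} x^k B_+^{(k)}(T(x)^{1+sk}) = \sum_{k=1}^{n} B_+^{(k)}\!\left([x^{n-k}] T(x)^{1+sk}\right),
\]
and each term $[x^{n-k}]T(x)^{1+sk}$ depends only on $T_0, T_1, \ldots, T_{n-k}$. Hence the equation recursively determines $T_n$ for every $n$, proving uniqueness.

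\textbf{Existence.} Let $S(x) = 1 + \sum_t \alpha_t\, t\, x^{w(t)}$, where $\alpha_t = c_t / |\Aut(t)|$ and $c_t = \prod_{v \in V(t)} (1+sw(v))_{\od v}$. To verify $S$ satisfies \cref{dse algebraic general}, I would extract the coefficient of $t\,x^{w(t)}$ on both sides for an arbitrary weighted rooted tree $t$ with root $r$ of weight $k = w(r)$ and children forest $f = s_1^{n_1} \cdots s_p^{n_p}$ (the $s_i$ being the pairwise non-isomorphic subtrees hanging off $r$ with multiplicities $n_i$, so $d := \od r = \sum_i n_i$). Since $t = B_+^{(k)}(f)$ is produced uniquely from this forest by $B_+^{(k)}$, and no other decoration $k'$ can create a root of weight $k$,
\[
    [t\,x^{w(t)}]\, x^k B_+^{(k)}\!\bigl(S(x)^{1+sk}\bigr) = [f\,x^{w(t)-k}]\, S(x)^{1+sk}.
\]

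\textbf{Coefficient extraction.} Using the generalized binomial expansion $S(x)^{1+sk} = \sum_{m \ge 0} \binom{1+sk}{m}\bigl(S(x)-1\bigr)^m$, and noting that the forest $f$ has exactly $d$ tree-factors (with multiplicities), only the $m=d$ term contributes. The multinomial coefficient for picking the trees $s_i$ with multiplicities $n_i$ gives
\[
    [f\,x^{w(t)-k}]\, S(x)^{1+sk} = \binom{1+sk}{d} \frac{d!}{\prod_i n_i!} \prod_i \alpha_{s_i}^{n_i} = \frac{(1+sk)_d}{\prod_i n_i!} \prod_i \frac{c_{s_i}^{n_i}}{|\Aut(s_i)|^{n_i}}.
\]

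\textbf{Matching.} Finally I would use the two standard factorizations
\[
    c_t = (1+sk)_d \prod_i c_{s_i}^{n_i}, \qquad |\Aut(t)| = \prod_i n_i!\, |\Aut(s_i)|^{n_i},
\]
the first because the product defining $c_t$ splits off the root, the second because a rooted-tree automorphism fixes $r$ and is determined by permuting isomorphic subtrees and then choosing automorphisms of each. Dividing, the extracted coefficient equals $c_t/|\Aut(t)| = \alpha_t$, matching the coefficient of $t\,x^{w(t)}$ in $S(x)$.

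The main obstacle is the careful bookkeeping around the generalized binomial coefficient: writing $\binom{1+sk}{d}d! = (1+sk)_d$ is what lets non-integer exponents $1+sk$ work here, and one must be careful that in $(S-1)^d$ one is extracting the coefficient of an unordered forest, which is where the factor $d!/\prod n_i!$ comes in. Once this is done correctly, the matching of automorphism factors is the standard one used in the single-cocycle case (cf.\ Foissy's proof for \cref{lem cdse sols}), and the argument goes through with the decoration $k$ carried harmlessly along.
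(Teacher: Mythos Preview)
Your proof is correct and is essentially the single-equation specialization of the argument the paper gives for \cref{thm cdse system sols}, from which it says this lemma follows: the same binomial expansion of $T(x)^p$, the same forest-counting with $d!/\prod n_i!$ and the automorphism factorization $|\Aut(t)| = \prod_i n_i!\,|\Aut(s_i)|^{n_i}$, and the same splitting of the vertex product at the root. The paper additionally mentions an alternative route---proving the $s=1$ case as in Bergbauer--Kreimer, extending to positive integers, and then interpolating since both sides are polynomials in $s$ at each tree---but does not develop it; your direct verification is cleaner and works uniformly for all $s\in K$ without that detour.
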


For $s = 1$ this is precisely \cite[Lemma 4]{bergbauer_hopf_2006}. The argument there can be extended to positive integer $s$, and the general case follows by uniqueness of interpolating polynomials. It can also be derived from our result \cref{thm cdse system sols}.

\begin{remark}
    While restricting to $D = \ZZ_{\ge 1}$ is sufficient for solving the single equation
    \cref{dse_differential_general}, we will make use of the more general setup when working with
    \textit{systems} of Dyson--Schwinger equations in \cref{sec systems}.
\end{remark}

So far, we have worked in the Hopf algebra of rooted trees. 
To solve equations of a similar form to \cref{dse algebraic} or \cref{dse algebraic general} in
other algebras, we can apply the map $\phi$ from \cref{thm ck universal property dec} to $T(x)$.
Most important for us is the algebra $K[L]$ of polynomials in the single variable $L$. (We will treat the $G(x, L)$ in \cref{dse_differential_general} as a series in $x$ with coefficients from $K[L]$.) This algebra is a Hopf algebra with coproduct given by
\begin{equation} \label{eq polynomial coproduct}
    \Delta L = 1 \otimes L + L \otimes 1
\end{equation}
extended uniquely as an algebra homomorphism.

\begin{theorem}[{\cite[Theorem 2.6.4]{panzer_hopf_2012}}] \label{thm polynomial cocycles}
    Consider a power series $A(u) \in K[[u]]$ and let $\partial_u = \frac\partial {\partial u}$. The operator on $K[L]$ given by
    \begin{equation} \label{polynomial cocycle 1}
        f(L) \mapsto \int_0^L \textnormal{d}u\;A( \partial_u)f(u)
    \end{equation}
     is a Hochschild 1-cocycle. Moreover, all 1-cocycles on $K[L]$ are of this form.
\end{theorem}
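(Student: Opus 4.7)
My plan has two parts: showing the given integral formula defines a 1-cocycle, and showing every 1-cocycle arises this way.

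For Part 1, I would exploit the algebra isomorphism $K[L] \otimes K[L] \cong K[L_1, L_2]$ (via $L \otimes 1 \mapsto L_1$, $1 \otimes L \mapsto L_2$), under which \cref{eq polynomial coproduct} becomes the translation $\Delta f = f(L_1 + L_2)$. Setting $g(u) := A(\partial_u) f(u) \in K[u]$, the key (easy) observation is that for any polynomial $f$, $A(\partial_v) f(L_1 + v) = g(L_1 + v)$ as a polynomial in $v$---each $\partial_v$ produces $f'$ at $L_1+v$, and only finitely many coefficients of $A$ contribute since $f$ has finite degree. Then the left-hand side of the cocycle identity evaluates to $\int_0^{L_1 + L_2} g(u)\,du$, while the right-hand side becomes $\int_0^{L_1} g(u)\,du + \int_0^{L_2} g(L_1 + v)\,dv$; substituting $u = L_1 + v$ in the second integral turns it into $\int_{L_1}^{L_1 + L_2} g(u)\,du$, and the cocycle identity reduces to additivity of the integral.

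For Part 2, given an arbitrary 1-cocycle $\Lambda$, write $p_n(L) := \Lambda(L^n)$. In the $(L_1, L_2)$ picture the cocycle identity reads
\begin{equation*}
    p_n(L_1 + L_2) = p_n(L_1) + \sum_{k=0}^n \binom{n}{k} L_1^k\, p_{n-k}(L_2).
\end{equation*}
Setting $L_2 = 0$ and comparing coefficients of $L_1^k$ forces $p_m(0) = 0$ for every $m$, so each $p_n$ is the integral of its derivative from $0$. Differentiating the displayed identity in $L_2$ and then setting $L_2 = 0$ yields $p_n'(L_1) = \sum_{k=0}^n \binom{n}{k} a_{n-k} L_1^k$, where $a_j := p_j'(0)$. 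Thus the entire cocycle is determined by the single sequence $(a_j)_{j \ge 0}$.

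I would then define $A(u) := \sum_{j \ge 0} \frac{a_j}{j!} u^j \in K[[u]]$ and verify that \cref{polynomial cocycle 1} reproduces $\Lambda$: expanding $\int_0^L A(\partial_u) u^n\,du$ using $\partial_u^j u^n = \frac{n!}{(n-j)!} u^{n-j}$ and reindexing $k = n - j$ gives $\sum_{k=0}^n \binom{n}{k} a_{n-k} \frac{L^{k+1}}{k+1} = \int_0^L p_n'(u)\,du = p_n(L)$. So the two cocycles agree on each basis element $L^n$, and hence on all of $K[L]$ by linearity, establishing uniqueness as well. The main obstacle I anticipate is purely clerical---keeping the binomial and falling-factorial identities aligned between the two expansions of $p_n$---while the substantive content is just translation invariance of integration and a first-order Taylor-type recovery of the cocycle from the scalars $p_n'(0)$.
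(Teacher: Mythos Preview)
Your proof is correct. Note, however, that the paper does not actually prove this theorem---it is quoted from Panzer's thesis (the citation in the theorem header), so there is no in-paper argument to compare against. The subsequent \cref{rem polynomial cocycles} does carry out the explicit basis computation $\Lambda(L^n/n!) = \sum_{k=0}^n a_k \frac{L^{n-k+1}}{(n-k+1)!}$, which matches your verification at the end of Part~2, but the paper uses that calculation only to rewrite the cocycle in the form \cref{polynomial cocycle 2}, not to establish the theorem.

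Your argument itself is clean: the translation identity $A(\partial_v)f(L_1+v) = g(L_1+v)$ reduces Part~1 to additivity of $\int$, and in Part~2 the evaluation $L_2=0$ forces $p_m(0)=0$ while differentiation at $L_2=0$ recovers $p_n'$ from the scalars $a_j = p_j'(0)$, so the cocycle is determined by (and reproduced from) the single power series $A(u) = \sum_j a_j u^j/j!$. The bookkeeping with binomials is accurate; your anticipated clerical obstacle does not materialize.
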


\begin{remark} \label{rem polynomial cocycles}
    Let $\Lambda$ be the operator in \cref{thm polynomial cocycles} and write $a_k$ for the
    coefficient of $u^k$ in $A(u)$. We can explicitly calculate the action of $\Lambda$ on a basis
    of $K[L]$:
    \begin{align*}
        \Lambda\left(\frac{L^n}{n!}\right)
        &= \int_0^L \mathrm{d}u\; \sum_{k=0}^\infty a_k \partial_u^k \frac{u^n}{n!} \\
        &= \int_0^L \mathrm{d}u\; \sum_{k=0}^n a_k \frac{u^{n-k}}{(n-k)!} \\
        &= \sum_{k=0}^n a_k \frac{L^{n-k+1}}{(n-k+1)!}.
    \end{align*}
    This is equal to the coefficient of $\rho^n$ in the power series $(e^{L\rho} - 1) A(\rho)/\rho$.
    It follows by linearity that $\Lambda$ can also be written as
    \begin{equation}\label{polynomial cocycle 2}
        \Lambda f(L) = f(\partial_\rho) \left(e^{L\rho} - 1\right)
        \frac{A(\rho)}{\rho}\bigg|_{\rho=0}
    \end{equation}
    since in the case of $f(L) = L^n/n!$ this extracts the same coefficient. The form
    \cref{polynomial cocycle 2} is more physically natural (see \cref{sec mellin transform}) since
    it appears in the Dyson--Schwinger equation \cref{dse_differential_general}. Nonetheless, we
    will make use of the form \cref{polynomial cocycle 1} in the proof of our main theorem.
\end{remark}

One of our main goals will be to understand explicitly what the bialgebra morphism from \cref{ck
universal property} looks like when $\Lambda$ is a 1-cocycle on $K[L]$. It will be useful to better
understand what morphisms into $K[L]$ look like in general. Recall that a \textit{character} of a
Hopf algebra $H$ is simply an algebra morphism $H \to K$; these form a group under the convolution
product $*$ defined as
\begin{equation}\label{convolution product}
    \alpha * \beta = (\alpha \otimes \beta) \Delta.
\end{equation}
An algebra morphism $H \to K[L]$ then defines a one-parameter family of characters; in the case of a
bialgebra morphism this is a one-parameter subgroup.  An \textit{infinitesimal character} is a map
$\sigma\colon H \to K$ such that
\begin{equation}\label{infinitesimal character}
    \sigma(ab) = \varepsilon(a)\sigma(b) + \sigma(a)\varepsilon(b)
\end{equation}
for all $a, b \in H$. Infinitesimal characters form a Lie algebra under convolution commutator.  More details can be found in \cite{bogfjellmo_character_2016,bogfjellmo_geometry_2016} and references therein. An
analogue in this setting of the exponential relationship between Lie algebras and groups is the
following theorem. 

\begin{theorem}  \label{thm character exponential}
    Suppose $H$ is a bialgebra and $\phi\colon H \to K[L]$ is an algebra morphism. Further, suppose
    that the constant term of $\phi$ is given by the counit. Let $\sigma$ be the linear term of
    $\phi$.  Then $\sigma$ is an infinitesimal character. Moreover, $\phi$ is a bialgebra morphism
    if and only if
    \begin{equation*}
        \phi = \exp_*(L\sigma) = \varepsilon + L\sigma + \frac 12 L^2 (\sigma * \sigma) +
        \cdots.
    \end{equation*}
\end{theorem}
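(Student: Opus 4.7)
The approach is to expand $\phi = \sum_{n\ge 0} \phi_n L^n$ with each $\phi_n \colon H \to K$ a linear functional, noting that $\phi_0 = \varepsilon$ and $\phi_1 = \sigma$ by hypothesis, and then translate each property of $\phi$ into relations on the coefficients $\phi_n$. The algebra morphism identity $\phi(ab) = \phi(a)\phi(b)$, after extracting the coefficient of $L^n$, becomes $\phi_n(ab) = \sum_{k=0}^{n}\phi_k(a)\phi_{n-k}(b)$. Specializing to $n=1$ reproduces exactly \cref{infinitesimal character} for $\sigma$, establishing the first claim.

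For the forward implication of the equivalence, I would use \cref{eq polynomial coproduct}, which extends to $\Delta L^n = \sum_{k=0}^{n}\binom{n}{k} L^k \otimes L^{n-k}$. Expanding both sides of $\Delta \phi = (\phi \otimes \phi)\Delta$ and comparing the coefficient of $L^k \otimes L^m$ shows that this condition is equivalent to the identities $\phi_k * \phi_m = \binom{k+m}{k}\phi_{k+m}$ for all $k,m \ge 0$. Setting $m = 1$ rearranges to the recursion $\phi_{k+1} = \frac{1}{k+1}\,\phi_k * \sigma$, and a direct induction starting from $\phi_0 = \varepsilon$ gives $\phi_n = \frac{1}{n!}\sigma^{*n}$, hence $\phi = \exp_*(L\sigma)$.

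For the converse, assume $\phi_n = \sigma^{*n}/n!$. The coproduct relation $\phi_k * \phi_m = \binom{k+m}{k}\phi_{k+m}$ is automatic from $\binom{k+m}{k}/(k+m)! = 1/(k!\,m!)$, so $\Delta \phi = (\phi \otimes \phi)\Delta$ holds. What remains is to check that $\phi$ is an algebra morphism, which after matching coefficients of $L^n$ reduces to the identity $\sigma^{*n}(ab) = \sum_{k=0}^{n}\binom{n}{k}\sigma^{*k}(a)\,\sigma^{*(n-k)}(b)$. I would prove this by induction on $n$: the base case $n=0$ is the counit axiom, and for the inductive step I would write $\sigma^{*(n+1)}(ab) = (\sigma^{*n} * \sigma)(ab)$, apply $\Delta(ab) = \Delta(a)\Delta(b)$ to split the Sweedler components, use the inductive hypothesis on $\sigma^{*n}(a_{(1)}b_{(1)})$ together with \cref{infinitesimal character} on $\sigma(a_{(2)}b_{(2)})$, absorb the $\varepsilon$ factors via the counit axiom, and finally combine via Pascal's identity $\binom{n}{j} + \binom{n}{j-1} = \binom{n+1}{j}$.

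The principal obstacle is this inductive verification of the Leibniz-type identity for $\sigma^{*n}$ on a product: it requires carefully unfolding the coproduct on $ab$, tracking which Sweedler factors absorb into the counit, and reindexing the two binomial sums arising from the two terms of $\sigma(a_{(2)}b_{(2)})$ so that Pascal's identity closes the recursion. The rest of the argument is bookkeeping with coefficients of $L^n$ and standard convolution identities.
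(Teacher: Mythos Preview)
Your argument is correct and follows essentially the same route as the paper: both reduce the coalgebra compatibility $\Delta\phi=(\phi\otimes\phi)\Delta$ to a relation among the $L$-coefficients of $\phi$ and then solve the resulting recursion. The paper packages this via the two-variable identity $\phi_{L_1+L_2}=\phi_{L_1}*\phi_{L_2}$ and differentiates, which is exactly your ``set $m=1$'' step in disguise.

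One point worth noting: your ``principal obstacle'' is not actually needed. The theorem \emph{assumes} $\phi$ is an algebra morphism, so for the converse direction you only have to check the coproduct identity $\phi_k*\phi_m=\binom{k+m}{k}\phi_{k+m}$ (and counit preservation, which is the hypothesis $\phi_0=\varepsilon$). You do this in one line from $\sigma^{*k}*\sigma^{*m}=\sigma^{*(k+m)}$. The inductive Leibniz-type verification that $\exp_*(L\sigma)$ is multiplicative is a true statement, but it proves something stronger than required and can be dropped from the proof of this theorem.
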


\begin{proof}
    By assumption we have
    \[
        \phi(a) = \varepsilon(a) + \sigma(a)L + \text{(higher-order terms)}
    \]
    for all $a \in H$. Then
    \[
        \phi(ab) = \phi(a)\phi(b) = \varepsilon(a)\varepsilon(b) + (\varepsilon(a)\sigma(b) +
        \sigma(a)\varepsilon(b))L + \text{(higher-order terms)}
    \]
    and hence
    \[
        \sigma(ab) = \varepsilon(a)\sigma(b) + \sigma(a)\varepsilon(b)
    \]
    so $\sigma$ is an infinitesimal character.

    For any element $u$ of a $K$-algebra $A$, let us write $\phi_u$ for the map $H \to A$ given by
    evaluating $\phi$ at $L = u$ (so $\phi_L = \phi$). Then by the definition \cref{eq polynomial
    coproduct} of the coproduct we have $\Delta \phi = \phi_{L \otimes 1 + 1 \otimes L}$. Since
    $\phi$ preserves the counit by hypothesis, $\phi$ is a bialgebra morphism if and only if
    \begin{equation} \label{eqn:phi preserves coproduct}
        \Delta\phi = (\phi \otimes \phi)\Delta.
    \end{equation}
    Now, we may identify $K[L] \otimes K[L]$ with the polynomial
    algebra $K[L_1, L_2]$ in two variables via the map $f(L) \otimes g(L) \mapsto f(L_1)g(L_2)$. Applying this to both sides of \cref{eqn:phi preserves coproduct} we see that $\phi$ is a bialgebra morphism if and only if
    \begin{equation} \label{eq bialgebra morphism equivalent}
        \phi_{L_1 + L_2} = \phi_{L_1} * \phi_{L_2}.
    \end{equation}
    If $\phi = \exp_*(L\sigma)$ it clearly satisfies \cref{eq bialgebra morphism equivalent};
    this is the defining property of the exponential function. Conversely suppose $\phi$ satisfies
    \cref{eq bialgebra morphism equivalent}. Then
    \begin{align*}
        \partial_L \phi
        &= [h^1] \phi_{L + h} \\
        &= \phi_L * [h^1] \phi_h \\
        &= \phi * \sigma
    \end{align*}
    and hence
    \begin{align*}
        [L^n] \phi
        &= \frac{1}{n!} \partial_L^n \phi\bigg|_{L = 0} \\
        &= \frac{1}{n!} \phi * \sigma^{*n}\bigg|_{L = 0} \\
        &= \frac{\sigma^{*n}}{n!}
    \end{align*}
    since $\phi_0 = \varepsilon$ by assumption. Of course, this coefficient is the same as in
    $\exp_*(L\sigma)$, so the two are equal.
\end{proof}

\subsection{Physical Dyson--Schwinger equations and Mellin transforms}
\label{sec mellin transform}

Returning to the physical motivation, we will now make the connection between the Dyson--Schwinger equations as set up so far and their more physical form and meaning. A more detailed exposition can be found in  \cite{kreimer_etude_2006,kreimer_etude_2008,kreimer_renormalization_2013,balduf_dyson_2024}.

As introduced in \cref{sec physical set up}, the Green function in question, $G(x,L)$, represents the 2-point function, that is, the propagator including quantum corrections. Schematically,  a Dyson--Schwinger equation in terms of Feynman integrals is a fixed point integral equation of the form
\begin{align}\label{dse_integral}
    G(x,p^2) = 1 +(1-\mathcal R) \sum_{k \geq 1} x^k \int \textnormal{d}^{kD} q\;  \Big( K_k (p^2,q^2) G(x,q_e^2)^{1+sk} \Big).
\end{align}
Here, $K_k$ denotes the sum of all $k$-loop kernel graphs. These are the  Feynman graphs with two external edges and without subdivergences. The integration $\textnormal{d}^{kD}$ is over the $k$ independent loop momenta of the kernel, each of which is a $D$-dimensional vector in Minkowski space. The variable $q_e$ denotes the momentum of the edge $e\in K_k$, it is a linear combination of the external momentum $p$ and the loop momenta $q$. Finally, $\mathcal R$ denotes the renormalization operator, which in our case is the subtraction at the fixed external momentum scale $\mu^2$. In general, the monomial $G(x,q_e^2)^{1+sk}$ is to be understood as a product of $G(x,q_e)$ where the individual $q_e$ belong to the different edges. The parameter $s\in \mathbb R$ indicates how many new edges arise per order $k$ of the coupling. For a theory with only  $n$-valent interaction vertices, the physical choice would be $s=-n$, but we leave $s$ as a free parameter. The choice $s\neq -n$ amounts to a sum of Feynman graphs with non-standard combinatorial prefactors.

We simplify the DSE \cref{dse_integral} by the assumption that the propagator corrections are inserted into only a single edge of the kernel graphs.  This is a loss of generality, but it implies that all factors in $G(x,q_e)^{1+sk}$ have the same argument $q_e$ -- no longer belonging to different edges $e$ -- and we can choose $q_e$ to be one of the integration momenta $q$. 

\begin{example}
\label{eg yukawa 1}
    The most basic example is the fermion propagator in 4-dimensional Yukawa theory, restricted to a single kernel graph ($k=1$), the 1-loop multi-edge with one fermion and one meson edge shown in \cref{fig yukawa kernel}. The case $s=0$ in the DSE \cref{dse_integral} for the Yukawa fermion,  
    \begin{align*}
    G(x,p^2) = 1 +(1-\mathcal R)\;  x \int \textnormal{d}^{D} q\;  \Big( K_1 (p^2,q^2) G(x,q^2) \Big),
    \end{align*}
    amounts to the \emph{rainbow approximation} shown in \cref{fig yukawa rainbows}. 
    
    Inserting the kernel into itself iteratively in all possible ways corresponds to $s=-2$ in the DSEs \cref{dse_integral,dse algebraic}, discussed in \cref{ex trees negative s}. We obtain graphs like \cref{fig yukawa all}.  These are the class of graphs for which Broadhurst and Kreimer solved the Dyson--Schwinger equation in \cite{broadhurst_exact_2001}, and in terms of which the original chord diagram expansion of one of us with Marie \cite{marie_chord_2013} was formulated.
\end{example}

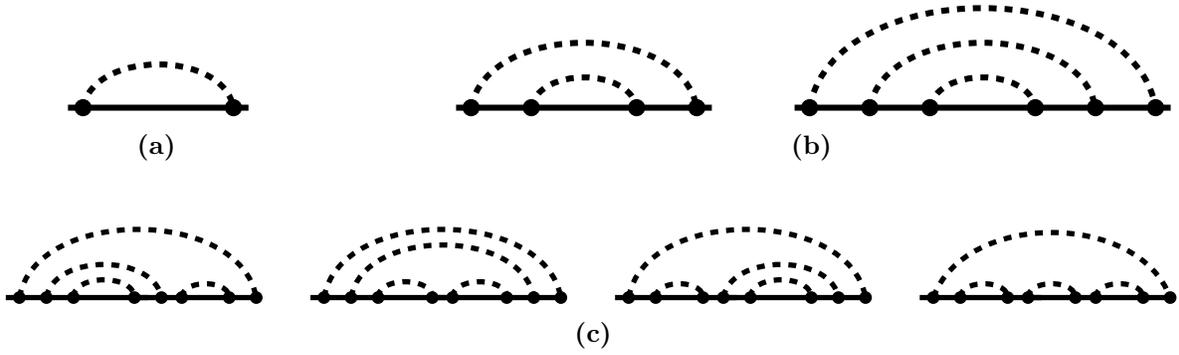
\begin{figure}[htb]
    \centering
    \begin{subfigure}[b]{.3\linewidth}
        \centering
        \begin{tikzpicture}

\coordinate (v1) at (0,0){};
            \coordinate (v2) at (2,0){};

            \node[vertex] at (v1){};
            \node [vertex] at (v2){};

            \draw[line width=.8mm, dashed, bend angle=80,bend left] (v1) to (v2);
            \draw[line width = .8mm] (v1) to (v2);
            \draw[line width = .8mm] (v1) to +(-.2,0);
            \draw[line width = .8mm] (v2) to +(.2,0);
        \end{tikzpicture}
        \caption{}
        \label{fig yukawa kernel}
    \end{subfigure}
    \hfill
    \begin{subfigure}[b]{.65\linewidth}
        \centering
        \begin{tikzpicture}

\coordinate (v1) at (.5,0){};
            \coordinate (v2) at (1.3,0){};
            \coordinate (v3) at (2.7,0){};
            \coordinate (v4) at (3.5,0){};
            
            \node [vertex] at (v1){};
            \node [vertex] at (v2){};
            \node [vertex] at (v3){};
            \node [vertex] at (v4){};

            \draw[line width=.8mm, dashed, bend angle=80,bend left] (v1) to (v4);
            \draw[line width=.8mm, dashed, bend angle=80,bend left] (v2) to (v3);
            \draw[line width = .8mm] (v1) -- (v2) -- (v3) -- (v4);
            \draw[line width = .8mm] (v1) to +(-.2,0);
            \draw[line width = .8mm] (v4) to +(.2,0);

            \coordinate (v1) at (5,0){};
            \coordinate (v2) at (5.8,0){};
            \coordinate (v3) at (6.6,0){};
            \coordinate (v4) at (8,0){};
            \coordinate (v5) at (8.8,0){};
            \coordinate (v6) at (9.6,0){};
            
            \node [vertex] at (v1){};
            \node [vertex] at (v2){};
            \node [vertex] at (v3){};
            \node [vertex] at (v4){};
            \node [vertex] at (v5){};
            \node [vertex] at (v6){};

            \draw[line width=.8mm, dashed, bend angle=80,bend left] (v1) to (v6);
            \draw[line width=.8mm, dashed, bend angle=80,bend left] (v2) to (v5);
            \draw[line width=.8mm, dashed, bend angle=80,bend left] (v3) to (v4);
            \draw[line width = .8mm] (v1) -- (v2) -- (v3) -- (v4) -- (v5) -- (v6);
            \draw[line width = .8mm] (v1) to +(-.2,0);
            \draw[line width = .8mm] (v6) to +(.2,0);
        \end{tikzpicture}
        \caption{}
        \label{fig yukawa rainbows}
    \end{subfigure}

    \vspace{.5cm}

    \begin{subfigure}[b]{\linewidth}
        \centering

\begin{tikzpicture}[scale=.9]

    \coordinate (v1) at (0,0){};
    \coordinate (v2) at ($(v1)+ (.4,0)$){};
    \coordinate (v3) at ($(v1)+ (.8,0)$){};
    \coordinate (v4) at ($(v1)+ (1.7,0)$){};
    \coordinate (v5) at ($(v1)+ (2.1,0)$){};
    \coordinate (v6) at ($(v1)+ (2.4,0)$){};
    \coordinate (v7) at ($(v1)+ (3.1,0)$){};
    \coordinate (v8) at ($(v1)+ (3.5,0)$){};
            
    \node [smallVertex] at (v1){};
    \node [smallVertex] at (v2){};
    \node [smallVertex] at (v3){};
    \node [smallVertex] at (v4){};
    \node [smallVertex] at (v5){};
    \node [smallVertex] at (v6){};
    \node [smallVertex] at (v7){};
    \node [smallVertex] at (v8){};

    \draw[line width=.7mm, dashed, bend angle=80,bend left] (v1) to (v8);
    \draw[line width=.7mm, dashed, bend angle=80,bend left] (v2) to (v5);
    \draw[line width=.7mm, dashed, bend angle=90,bend left] (v3) to (v4);
    \draw[line width=.7mm, dashed, bend angle=90,bend left] (v6) to (v7);
    \draw[line width = .7mm] (v1) -- (v2) -- (v3) -- (v4) -- (v5) -- (v6) -- (v7) -- (v8) ;
    \draw[line width = .7mm] (v1) to +(-.2,0);
    \draw[line width = .7mm] (v4) to +(.2,0);

    \coordinate (v1) at (4.5,0){};
    \coordinate (v2) at ($(v1)+ (.4,0)$){};
    \coordinate (v3) at ($(v1)+ (.8,0)$){};
    \coordinate (v4) at ($(v1)+ (1.6,0)$){};
    \coordinate (v5) at ($(v1)+ (1.9,0)$){};
    \coordinate (v6) at ($(v1)+ (2.7,0)$){};
    \coordinate (v7) at ($(v1)+ (3.1,0)$){};
    \coordinate (v8) at ($(v1)+ (3.5,0)$){};
            
    \node [smallVertex] at (v1){};
    \node [smallVertex] at (v2){};
    \node [smallVertex] at (v3){};
    \node [smallVertex] at (v4){};
    \node [smallVertex] at (v5){};
    \node [smallVertex] at (v6){};
    \node [smallVertex] at (v7){};
    \node [smallVertex] at (v8){};

    \draw[line width=.7mm, dashed, bend angle=80,bend left] (v1) to (v8);
    \draw[line width=.7mm, dashed, bend angle=80,bend left] (v2) to (v7);
    \draw[line width=.7mm, dashed, bend angle=90,bend left] (v3) to (v4);
    \draw[line width=.7mm, dashed, bend angle=90,bend left] (v5) to (v6);
    \draw[line width = .7mm] (v1) -- (v2) -- (v3) -- (v4) -- (v5) -- (v6) -- (v7) -- (v8) ;
    \draw[line width = .7mm] (v1) to +(-.2,0);
    \draw[line width = .7mm] (v4) to +(.2,0);

    \coordinate (v1) at (9,0){};
    \coordinate (v2) at ($(v1)+ (.4,0)$){};
    \coordinate (v3) at ($(v1)+ (1.1,0)$){};
    \coordinate (v4) at ($(v1)+ (1.4,0)$){};
    \coordinate (v5) at ($(v1)+ (1.8,0)$){};
    \coordinate (v6) at ($(v1)+ (2.7,0)$){};
    \coordinate (v7) at ($(v1)+ (3.1,0)$){};
    \coordinate (v8) at ($(v1)+ (3.5,0)$){};
            
    \node [smallVertex] at (v1){};
    \node [smallVertex] at (v2){};
    \node [smallVertex] at (v3){};
    \node [smallVertex] at (v4){};
    \node [smallVertex] at (v5){};
    \node [smallVertex] at (v6){};
    \node [smallVertex] at (v7){};
    \node [smallVertex] at (v8){};

    \draw[line width=.7mm, dashed, bend angle=80,bend left] (v1) to (v8);
    \draw[line width=.7mm, dashed, bend angle=90,bend left] (v2) to (v3);
    \draw[line width=.7mm, dashed, bend angle=80,bend left] (v4) to (v7);
    \draw[line width=.7mm, dashed, bend angle=90,bend left] (v5) to (v6);
    \draw[line width = .7mm] (v1) -- (v2) -- (v3) -- (v4) -- (v5) -- (v6) -- (v7) -- (v8) ;
    \draw[line width = .7mm] (v1) to +(-.2,0);
    \draw[line width = .7mm] (v4) to +(.2,0);

    \coordinate (v1) at (13.5,0){};
    \coordinate (v2) at ($(v1)+ (.4,0)$){};
    \coordinate (v3) at ($(v1)+ (1.1,0)$){};
    \coordinate (v4) at ($(v1)+ (1.4,0)$){};
    \coordinate (v5) at ($(v1)+ (2.1,0)$){};
    \coordinate (v6) at ($(v1)+ (2.4,0)$){};
    \coordinate (v7) at ($(v1)+ (3.1,0)$){};
    \coordinate (v8) at ($(v1)+ (3.5,0)$){};
            
    \node [smallVertex] at (v1){};
    \node [smallVertex] at (v2){};
    \node [smallVertex] at (v3){};
    \node [smallVertex] at (v4){};
    \node [smallVertex] at (v5){};
    \node [smallVertex] at (v6){};
    \node [smallVertex] at (v7){};
    \node [smallVertex] at (v8){};

    \draw[line width=.7mm, dashed, bend angle=70,bend left] (v1) to (v8);
    \draw[line width=.7mm, dashed, bend angle=90,bend left] (v2) to (v3);
    \draw[line width=.7mm, dashed, bend angle=90,bend left] (v4) to (v5);
    \draw[line width=.7mm, dashed, bend angle=90,bend left] (v6) to (v7);
    \draw[line width = .7mm] (v1) -- (v2) -- (v3) -- (v4) -- (v5) -- (v6) -- (v7) -- (v8) ;
    \draw[line width = .7mm] (v1) to +(-.2,0);
    \draw[line width = .7mm] (v4) to +(.2,0);

\end{tikzpicture}
        \caption{}
        \label{fig yukawa all}
    \end{subfigure}
    
    \caption{ In all three figures, bold edges represent fermions and dashed edges represent mesons. \textbf{(a)}  1-loop kernel graph $K_1 $ of the fermion propagator in Yukawa theory.  Corrections are to be inserted recursively into the fermion edge.  \textbf{(b)} The first two graphs of the \emph{rainbow approximation} to the fermion propagator. Their insertion trees are the ladder trees $\ell_2$ and $\ell_3$, respectively. \textbf{(c)}  4-loop graphs for the choice $s=-2$. The 4-loop rainbow is not shown.}
    \label{fig yukawa}
\end{figure}

The integral DSE \cref{dse_integral} is the counterpart of the combinatorial DSE \cref{dse algebraic general}; the Hochschild 1-cocycle $B^{(k)}_+$ corresponds to insertion into the $k$-th kernel graph $K_k$. To see this more explicitly, we rewrite the series expansion \cref{G_log_expansion} in terms of derivatives, using \cref{def L} and the shorthand $\partial_\rho = \frac  \partial {\partial \rho}$:
\begin{align}\label{G_log_expansion2}
    G(x,L) = 1 + \sum_{k=1}^\infty \gamma_k(x) L^k = 1+\sum_{k=1}^\infty \gamma_k(x) \partial_\rho^k \left( \frac{q^2}{\mu^2}\right)^{\rho} \Big|_{\rho=0} = G(x,\partial_\rho) \left( \frac{q^2}{\mu^2}\right)^{\rho} \Big|_{\rho=0}.
\end{align}
Inserting \cref{G_log_expansion2} into the Dyson--Schwinger equation \cref{dse_integral}, we obtain
\begin{align}\label{dse_integral_mellin}
    G(x,p^2) = 1 +(1-\mathcal R) \sum_{k \geq 1} x^k G(x,\partial_\rho)^{1+sk} (\mu^{-2})^\rho  \int \textnormal{d}^{kD} q\;  \Big( K_k (p^2,q^2) (q^2)^\rho  \Big)\Big|_{\rho=0}.
\end{align}
For a kernel $K_k$, the integral on the righthand side is the definition of the \emph{Mellin transform} $F_k(\rho)$, which in turn is expressed by its power series coefficients $c_{j,k}$:
\begin{align}\label{mellin_transform}
    \int \text{d}^{kD} q \; \Big(K_k (p^2,q^2)  \cdot \left(  q^2 \right)^{\rho} \Big)  &=  (p^2)^{\rho} F_k(\rho)=  e^{\rho \ln p^2}\sum_{j=0}^\infty c_{j,k} \rho^{j-1}.
\end{align}
In this definition, all powers of $4\pi$ and $\gamma_E$, which might arise in the loop integral, are absorbed by a redefinition of the coupling $x^k$. The simple pole $c_{0,k} \rho^{-1}$ in $F_k(\rho)$ indicates that $K_k$ is a primitively divergent Feynman graph.

The kinematic renormalization operator $\mathcal R$ in \cref{dse_integral} amounts to subtraction at the renormalization point $q^2 =\mu^2$. This produces a prefactor $((p^2)^{\rho} - (\mu^2)^\rho)$ to the integral in \cref{dse_integral}. Rescaling all momenta by $\mu^2$, and using \cref{def L}, we finally arrive at \cref{dse_differential_general}, 
\begin{align}\label{analytic dse}
    G(x,L) &= 1 + \sum_{k \geq 1} x^k G(x,\partial_\rho)^{1+sk} \Big( (e^{L \rho}-1) F_k(\rho) \Big)\Big|_{\rho=0}.
\end{align}
In \cref{analytic dse}, the insertion into kernel $K_k$ takes exactly the form \cref{polynomial cocycle 2}, where the series $A(\rho)/\rho = F_k(\rho)$ is the Mellin transform \cref{mellin_transform}. By this identification, the integral DSE \cref{dse_integral} -- restricted to insertions into a single edge -- indeed corresponds to the combinatorial DSE \cref{dse algebraic general}. 

\begin{example}\label{eg yukawa mellin transform}
    For the Yukawa fermion propagator introduced in \cref{eg yukawa 1}, in $D=4$ dimensions, the Mellin transform \cref{mellin_transform} of the kernel is (remember that we rescaled $x$ to absorb  $4\pi$)
\begin{align*}
    \int \frac{\text{d}^4 q}{(2\pi)^4} \; \frac{1}{(p+q)^2}\frac{1}{q^2} (q^2)^\rho &\propto (p^2)^\rho \frac{\Gamma(-\rho) \Gamma(1+\rho)}{\Gamma(1-\rho) \Gamma(2+\rho)} = \frac{e^{\rho \ln p^2} }{(-\rho)(\rho+1)} \nonumber \\
    \Rightarrow  F(\rho) &= \frac 1 {(-\rho)(\rho+1)} = \sum_{j=0}^\infty - (-1)^j \cdot \rho^{j-1}, \qquad c_{j,1} =c_j = -(-1)^j.
\end{align*}
\end{example}

The morphism $\phi$ from \cref{thm ck universal property dec} operates not on Feynman graphs, but on the Connes--Kreimer Hopf algebra of their insertion trees (\cref{def insertion tree}). Consequently, $\phi$ is sometimes called \emph{tree Feynman rules}: it assigns a polynomial in $L$ to each insertion tree, and turns the combinatorial Green function $T(x)$, which is a series in rooted trees, into the analytic Green function  $G(x,L) = \phi (T(x))$. By \cref{thm character exponential}, the linear term of the Feynman rules is given by the infinitesimal character \cref{infinitesimal character}, and the order-$k$ term is given by $\sigma^{*k}$. In the (log) expansion \cref{G_log_expansion}, the coefficient of $L^k$ is the function $\gamma_k(x)$, therefore the infinitesimal character extracts the anomalous dimension:
\begin{align}\label{anomalous dimension character}
    \gamma(x) &= \sigma(T(x)), \qquad \gamma_k(x) = \sigma^{\star k} (T(x)), \qquad G(x,L) = e^{\star \sigma L}(T(x)).
\end{align}

Conversely, the  Feynman rules discussed in the present subsection are a map from Feynman graphs directly to polynomials in $L$, without the intermediate step of insertion trees. This map is a composition of the tree Feynman rules $\phi$ and the map that sends a Feynman graph to its decorated insertion trees. Likewise, the combinatorial Dyson--Schwinger equations \cref{dse algebraic,dse_differential_general} can equivalently be seen as equations for Feynman graphs, where $T(x)$ is a series of Feynman graphs and $B^{(k)}_+$ denotes insertion into the corresponding kernel graph.

\begin{example}\label{eg yukawa insertion trees}

The Yukawa propagator from \cref{eg yukawa 1} involves only a single kernel graph, hence the resulting insertion trees are undecorated. It can be generalized to multiple kernel graphs, see e.g.~\cite{bierenbaum_nexttoladder_2007}, in which case the insertion trees obtain non-trivial labelling or decoration.

Let $Y$ be  a Yukawa graph with $n$ meson lines, where the interaction vertices are numbered $\{1,\ldots, 2n\}$.  We construct an explicit map $d :Y \mapsto t$   to its insertion tree $t$.
Each meson line is expressible by its two end vertices,  the outermost meson line is $c_0=(1,2n)$ and maps to the root $v_0 = d(c_0)$ of the plane rooted tree.  For each meson line $(a_j,b_j)$ of $Y$, consider the mesons which are outermost inside $(a_j,b_j)$, calling them $\{a_{i_1},b_{i_1}\}, \{a_{i_2},b_{i_2}\}$, $\dots, \{a_{i_k},b_{i_k}\}$ such that $a_j  <  a_{i_1}<b_{i_1} <  a_{i_2}<b_{i_2}< \dots < a_{i_k}<b_{i_k}  <  b_j$. Then the children of the vertex $v_j$ are $v_{i_1},v_{i_2},\dots, v_{i_k}$, where $v_j = d(a_j,b_j)$ is associated to the meson $(a_j,b_j)$ and $v_{i_l} = d(a_{i_l},b_{i_l})$ is associated with $(a_{i_l},b_{i_l})$.
In other words, if there exist components $c_1,c_2, \dots, c_k$ nested within each meson $c_i$ of the Yukawa graph, then there is a vertex $v_i=d(c_i)$ in the plane rooted tree with children $d(c_1),d(c_2), \dots, d(c_k)$.

The so-defined map $d$  is a bijection between the set of these Yukawa graphs and the set of plane rooted trees. Furthermore, the number of meson lines corresponds to the number of vertices under $d$.
To see this, proceed inductively. The result holds for one meson $c_0$ and one vertex $v_0$ immediately from the definition.  Given $Y$ with more than one meson, applying the definition to the outermost meson $c_0$, we see that the children of the root $v_0$ are given by the subgraphs inserted under $c_0$.  Inductively, $d$ is bijective on these subgraphs and subtrees, and so $d(Y)$ can be uniquely obtained from $Y$, and the number of mesons corresponds to the number of vertices. 

\cref{figure yukawa insertion trees} shows the insertion trees corresponding to the Yukawa Feynman graphs in \cref{fig yukawa}. The insertion tree of the $n$-loop rainbow graph is the ladder $\ell_n$ from \cref{ex dse trees ladder}.

So far, $d$ has been a bijection between individual graphs and rooted trees. If we set $s=-2$ in the Dyson--Schwinger equation, we obtain precisely \emph{all} possible Yukawa graphs, and $d$ is a bijection to the set of \emph{all} plane rooted trees. Indeed, for $s=-2$,  the structure of the combinatorial Dyson--Schwinger equation for the Feynman graphs is identical to that of the combinatorial Dyson--Schwinger equation for plane rooted trees as described in \cref{ex trees negative s}.

As will be discussed in \cref{eg Yukawa chord}, the Yukawa propagator Feynman graphs can be viewed as indecomposable non-crossing chord diagrams, where the meson lines are the chords. Then, the above bijection $d$ is between indecomposable non-crossing chord diagrams and plane rooted trees. This bijection $d$ is different from the later $\theta$ (\cref{map tubed RTs to connected CDs}), which is between chord diagrams and \emph{tubed} rooted trees.  Rather, $d$ gives us the tree of a Yukawa graph, and then that is the tree we tube. $\theta$ then translates each tubing to a chord diagram to give the contribution of the Yukawa graph to the original chord diagram expansions. Note further that this concrete form of $d$ is valid only for the Yukawa example, whereas the bijection $\theta$, and the tubings, operate on the insertion trees and are agnostic with respect to the physical theory in question.

\end{example}

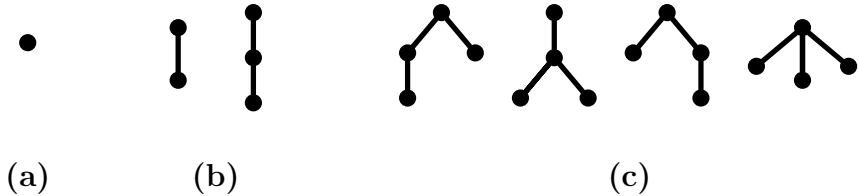
\begin{figure}[htb]
    \centering
    \begin{tikzpicture}
        \coordinate (dy) at (0,-.7);
    
        \node[vertex] at (-.5,.8){};
        \node at (-.5,-1){{\bfseries (a)}};

        \node[vertex](v1) at (1.5,1){};
        \node[vertex](v2) at ($(v1) + (dy)$){};
        \draw[edge] (v1)--(v2);
        \node[vertex](v1) at (2.5, 1.2){};
        \node[vertex](v2) at ($(v1) + (0,-.6)$){};
        \node[vertex](v3) at ($(v2) + (0,-.6)$){};
        \draw[edge] (v1) -- (v2) -- (v3);
        \node at (2,-1){{\bfseries (b)}};

        \node[vertex] (v1) at (5, 1.2){};
        \node[vertex] (v2) at ($(v1) + (230:.7)$){};
        \node[vertex] (v3) at ($(v1) + (310:.7)$){};
        \node[vertex] (v4) at ($(v2) + (270:.6)$){};
        \draw[edge] (v4) -- (v2) -- (v1) -- (v3);
        \node[vertex] (v1) at (6.5, 1.2){};
        \node[vertex] (v2) at ($(v1) + (270:.6)$){};
        \node[vertex] (v3) at ($(v2) + (230:.7)$){};
        \node[vertex] (v4) at ($(v2) + (310:.7)$){};
        \draw[edge] (v1) -- (v2) -- (v3) -- (v2) --(v4);
        \node[vertex] (v1) at (8, 1.2){};
        \node[vertex] (v2) at ($(v1) + (310:.7)$){};
        \node[vertex] (v3) at ($(v1) + (230:.7)$){};
        \node[vertex] (v4) at ($(v2) + (270:.6)$){};
        \draw[edge] (v4) -- (v2) -- (v1) -- (v3);
        \node[vertex] (v1) at (9.8, 1){};
        \node[vertex] (v2) at ($(v1) + (220:.8)$){};
        \node[vertex] (v3) at ($(v1) + (270:.7)$){};
        \node[vertex] (v4) at ($(v1) + (320:.8)$){};
        \draw[edge] (v2) -- (v1) -- (v3) -- (v1) -- (v4);

        \node at (7.5, -1){{\bfseries (c)}};

    \end{tikzpicture}
    \caption{Insertion trees corresponding to the Feynman graphs in \cref{fig yukawa}. {\bfseries (a)} the kernel itself is represented by a single vertex. {\bfseries (b)} Rainbow graphs correspond to ladder trees $\ell_n$ (\cref{sec ladders}). {\bfseries (c)} the choice $s=-2$ gives rise to all plane rooted trees, not just ladders. }
    \label{figure yukawa insertion trees}
\end{figure}

The Dyson--Schwinger equation implies the validity of the renormalization group equation, or Callan--Symanzik equation \cite{callan_broken_1970,symanzik_small_1970}.  For the higher coefficients $\gamma_{k>1}(x)$ in the series expansion \cref{G_log_expansion}, the latter reads \cite{kreimer_anatomy_2006,bergbauer_hopf_2006,kreimer_etude_2006,kreimer_etude_2008}
\begin{align}\label{gammak_rge}
   \gamma_1(x) = \gamma(x), \qquad  k \gamma_k(x) &= \gamma(x) \left( 1+sx \partial_x \right) \gamma_{k-1}(x) \qquad \forall k>1.
\end{align}
By virtue of \cref{gammak_rge}, the full solution \cref{G_log_expansion} of a DSE can be reconstructed entirely from the anomalous dimension $\gamma(x)$, that is, the linear coefficient $[L^1]G(x,L)$. 
The case of a linear Dyson--Schwinger equation, that is $s=0$, is particularly simple.  From \cref{gammak_rge} we obtain
\begin{align}
    \gamma_k(x) = \frac{1}{k!}\gamma(x)^k, \qquad G(x,L) = \exp \left( \gamma(x) \cdot L \right) = \left( \frac{q^2}{\mu^2}\right)^{\gamma(x)}.
\end{align}

In the case of only a single kernel graph and $s\neq 0$, one obtains the (pseudo) differential equation \cite{balduf_dyson_2024}
\begin{align}\label{anomalous_dimension_nonlinear}
    \frac{1}{\rho \cdot F(\rho)} \Big|_{\rho \rightarrow \gamma(x)(1+sx \partial_x)} \gamma(x) &= x
\end{align}
for the  anomalous dimension $\gamma(x)$, by inserting the expansion \cref{G_log_expansion} into the DSE \cref{analytic dse} and using \cref{gammak_rge}.

\begin{example}\label{eg yukawa ode}
For the Yukawa example, the Mellin transform was computed in \cref{eg yukawa mellin transform}, and \cref{anomalous_dimension_nonlinear} becomes
\begin{align*}
     -\left( 1+\gamma(x) (1+s x \partial_x) \right) \gamma(x) = x.
\end{align*}
This ODE, and its close cousin for $D=6$ dimensions, recently attracted interest for its non-perturbative solutions \cite{borinsky_nonperturbative_2020,borinsky_semiclassical_2021,borinsky_resonant_2022}.
\end{example}

For a linear DSE, where $s=0$, \cref{anomalous_dimension_nonlinear} is replaced by the algebraic equation \cite{kreimer_etude_2008}
  \begin{align}\label{anomalous_dimension_linear}
    \frac 1 {F(\gamma(x))} =  x \qquad \Leftrightarrow \qquad \gamma(x) = F^{-1} \left(  \frac 1 x \right).
\end{align}

For the bijection to tubings examined later, it will be useful to express \cref{anomalous_dimension_linear} in terms of the individual series coefficients. This is done by the following lemma:

\begin{lemma}\label{lem linear coefficients}
    Consider a linear DSE with a single kernel graph with Mellin transform \cref{mellin_transform} $F(\rho) = \sum_{j=0}^\infty c_j \rho^{j-1}$ (that is, we ware in the unweighted setting). Then, the coefficients of the anomalous dimension are $ [x^1]\gamma(x) = c_0$ and
    \begin{align*}
       [x^n] \gamma(x) &=  \sum_{j=1}^{n-1} \frac{c_0^{n-j}}{(n-j)!} B_{n-1,j} (1! c_1, 2! c_2, 3! c_3, \ldots),\qquad n>1,
    \end{align*}
    where $B_{n,k}(\ldots)$ is the partial Bell polynomial.
\end{lemma}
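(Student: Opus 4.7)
The plan is to recast the algebraic equation \cref{anomalous_dimension_linear} as a Lagrange-inversion problem and then repackage the resulting coefficient as a sum of Bell polynomials. The key observation is that the relation $F(\gamma(x)) = 1/x$, when multiplied through by $\gamma(x)$, becomes
\[
    \gamma(x) = x \sum_{j \ge 0} c_j \, \gamma(x)^j = x\, G(\gamma(x)),
\]
where $G(\rho) := \rho F(\rho) = \sum_{j \ge 0} c_j \rho^j$ is an ordinary power series with $G(0) = c_0$. This is precisely the implicit-equation form $\gamma = x G(\gamma)$ to which classical Lagrange inversion applies, yielding
\[
    [x^n]\gamma(x) = \frac{1}{n}\,[\rho^{n-1}]\, G(\rho)^n.
\]
The case $n=1$ gives $[x^1]\gamma(x) = [\rho^0] G(\rho) = c_0$ directly, so from here on I would fix $n > 1$.

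Next, I would separate the constant term $c_0$ from the rest by writing $G(\rho) = c_0 + H(\rho)$ with $H(\rho) = \sum_{m \ge 1} c_m \rho^m$, and expand via the binomial theorem:
\[
    G(\rho)^n = \sum_{k=0}^{n} \binom{n}{k} c_0^{\,n-k}\, H(\rho)^k.
\]
Since $H$ vanishes at $0$, the term $k=0$ contributes only a constant and hence nothing to $[\rho^{n-1}]$ for $n > 1$, and $H(\rho)^k$ starts at $\rho^k$, so the relevant range is $1 \le k \le n-1$.

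The final step is to identify $[\rho^{n-1}]H(\rho)^k$ with a partial Bell polynomial. The defining exponential-generating-function identity for the partial Bell polynomials,
\[
    \frac{1}{k!}\left(\sum_{m \ge 1} x_m \frac{\rho^m}{m!}\right)^{\!k}
    = \sum_{N \ge k} B_{N,k}(x_1, x_2, \ldots)\, \frac{\rho^N}{N!},
\]
applied with the substitution $x_m = m!\, c_m$ (so that $x_m \rho^m/m! = c_m \rho^m$), gives
\[
    H(\rho)^k = k!\sum_{N \ge k} B_{N,k}(1!\,c_1,\, 2!\,c_2,\, \ldots)\, \frac{\rho^N}{N!}.
\]
Extracting $[\rho^{n-1}]$ and substituting back yields
\[
    [x^n]\gamma(x)
    = \frac{1}{n}\sum_{k=1}^{n-1} \binom{n}{k}\, c_0^{\,n-k}\, \frac{k!}{(n-1)!}\, B_{n-1,k}(1!\,c_1, 2!\,c_2, \ldots),
\]
and the elementary simplification $\binom{n}{k} k!/(n \cdot (n-1)!) = 1/(n-k)!$ produces exactly the claimed formula.

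There is essentially no hard obstacle; the proof is a chain of standard identities. The only mild care needed is (i) verifying that the formal-power-series manipulations are valid (which follows because $\gamma(x)$ has no constant term, so $\gamma(x)^j$ starts in degree $j$ and the multiplication by $\gamma(x)$ to clear the $c_0/\gamma$ term is legitimate in $K[[x]]$), and (ii) bookkeeping the summation range and the $c_0$-separation so that the $k=0$ and $k=n$ terms are correctly excluded.
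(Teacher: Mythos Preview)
Your proof is correct and follows essentially the same route as the paper: rewrite \cref{anomalous_dimension_linear} as $\gamma(x)=xG(\gamma(x))$ with $G(\rho)=\rho F(\rho)$, apply Lagrange inversion to get $[x^n]\gamma(x)=\tfrac{1}{n}[\rho^{n-1}]G(\rho)^n$, split off the constant term $c_0$, and identify the remaining coefficients via the generating-function definition of the partial Bell polynomials. The paper's version differs only cosmetically, writing $F(\rho)=\tfrac{c_0}{\rho}(1+\bar F(\rho))$ with $\bar F=H/c_0$ and extracting $[\rho^{-1}]F(\rho)^n$ instead of $[\rho^{n-1}]G(\rho)^n$, which amounts to the same computation.
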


\begin{proof}
 
For the reader's convenience, we recall from Comtet's textbook \cite[p133]{comtet_advanced_1974} the well-known formula
\begin{align}\label{partial bell series}
    \frac{1}{j!}
    \Big(
    \sum_{m > 0} c_m t^m  
    \Big)^j 
    = \sum_{n \ge j} B_{n,j}(1!c_1, 2!c_2
    \ldots, (n-j+1)!c_{n-j+1}) \frac{t^n}{n!},
\end{align}
which defines the partial Bell polynomials 
$$
    B_{n,j}(1!c_1, 2!c_2
    \ldots, (n-j+1)!c_{n-j+1})
    :=\sum 
    \frac{n!}{i_1!i_2! \cdots i_{n-j+1}!}
    c_1^{i_1}c_2^{i_2} \cdots c_{n-j+1}^{i_{n-j+1}},
$$
where the sum on the righthand side runs over all sequences of non-negative integers $i_1,\ldots, i_{n-j+1}$ such that
$i_1+i_2+ \cdots + i_{n-j+1}= j$ and $i_1+ 2i_2+ \cdots + ({n-j+1})i_{n-j+1} = n$.

    The  claimed expression in \cref{lem linear coefficients} is a series expansion of \cref{anomalous_dimension_linear}. To derive it, we use the Lagrange inversion formula \cite{merlini_lagrange_2006}, starting from the following statement: If two power series $A(x)$ and $B(x)$ satisfy 
    \begin{align}\label{lif}
        A(x) =   x B(A(x)),
    \end{align}
    then the $n$-the coefficient of $A(x)$ can be computed from the series $B(x)$ according to
    \begin{align}\label{lif2}
        [x^n] A(x) =  \frac 1 n [x^{n-1}] B(x)^n
    \end{align}
    for $n > 0$.
    In our case, we let $A(x) =\gamma(x)$ be the anomalous dimension and we choose $B(x) =  x F(x)$. With this choice, \cref{lif} becomes $\gamma(x) =  x \cdot \gamma(x) \cdot F(\gamma(x))$, which is \cref{anomalous_dimension_linear}. 

    Now define $\bar F(\rho)$ by the equation
    \begin{align*}
        F(\rho) = \sum_{j=0}^\infty c_j \rho^{j-1} = \frac{c_0}\rho \left( 1+ \bar F(\rho)\right), \qquad \bar F (\rho) = \sum_{j=1}^\infty \frac{c_j}{c_0} \rho^j.
    \end{align*}
    From Faà di Bruno's formula and the binomial theorem, one has
    \begin{align*}
        \bar F(\rho)^j &=  \sum_{k=j}^\infty \frac{j!}{k!}c_0^{-j} B_{k,j} \left( 1! c_1, 2! c_2, 3! c_3, \ldots \right) \rho^k\\
		F(\rho)^n &= \frac{c_0^n}{\rho^n} \sum_{j=0}^n \binom{n}{j}\bar F(\rho)^j= \sum_{j=0}^n  \sum_{k=j}^\infty \frac{n!}{(n-j)!k!}c_0^{n-j} B_{k,j} \left( 1! c_1, 2! c_2, 3! c_3, \ldots \right) \rho^{k-n}.
    \end{align*}
    By \cref{lif2}, we need to extract the coefficient $[\rho^{n-1}]$ of $(\rho F(\rho))^n$, which amounts to extracting $[\rho^{-1}]$ from $(F(\rho))^n$. Using \cref{partial bell series}, we find
    \begin{align*}
        [x^n]\gamma(x) &= \frac{1}{n} [\rho^{-1}] \sum_{j=0}^n  \sum_{k=j}^\infty \frac{n!}{(n-j)!k!}c_0^{n-j} B_{k,j} \left( 1! c_1, 2! c_2, 3! c_3, \ldots \right) \rho^{k-n}.
    \end{align*}
    Taking the coefficient with respect to $k=n-1$ from this sum produces the claimed result. Observe that $j=0$ and $j=n$ do not contribute since the Bell polynomial vanishes.
\end{proof}

\begin{example}
\label{eg Yukawa ladders physically}
The rainbow approximation to the Yukawa fermion, introduced in \cref{eg yukawa 1}, is a linear DSE. Using the Mellin transform from \cref{eg yukawa mellin transform} in \cref{anomalous_dimension_linear}, we obtain the well-known  anomalous dimension  \cite{delbourgo_dimensional_1996,kreimer_etude_2008}
\begin{align*}
    \gamma(x) = \frac{\sqrt{1-4x}-1}{2}, \qquad \text{(for $s=0$ in Yukawa theory)}.
\end{align*}
Observe that our choice $G=1+L+\cdots$ in \cref{G_log_expansion} flips the sign of $x$ compared to some of the literature.  We'll return to this and the connection to Catalan numbers in \cref{sec ladders}.
\end{example}

\section{Special cases of tubings}
\label{sec examples} 

Having introduced the algebraic formalism and the physical motivation of the Dyson--Schwinger equations in question, we now return to the main topic of the present paper: the solution of the DSE in terms of tubings, claimed in \cref{thm main thm}. Before we prove this theorem, we illustrate its consequences in some more detailed examples.

\smallskip

Recall that a \emph{decreasing labelling} of a rooted tree $t$ with $n$ vertices is a labelling of the vertices by $1, \ldots, n$ such that every child has a smaller label than its parent.  This is a classical and well-studied notion, particularly in the equivalent form of \emph{increasing labellings} \cite{bergeron_varieties_1992}.

Our first example is a specific type of tubing called \emph{leaf tubing} which is defined to be a tubing where every bipartition of a tube divides the tube into a single leaf and the rest of the tube. The following lemma illustrates the relationship between decreasing labelling and leaf tubing.

\begin{lemma} \label{leaf_label_lemma}
Let $t$ be a rooted tree with $n$ vertices.  The number of leaf tubings of $t$ is the number of decreasing labellings of $t$.
\end{lemma}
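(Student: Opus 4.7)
The plan is to build an explicit bijection between leaf tubings of $t$ and decreasing labellings of $t$, based on the recursive splitting procedure from \cref{rem tau prime and tau double prime}.

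First, given a leaf tubing $\tau$, I would describe the bijection as follows. Using the recursive characterization, $\tau$ is built by repeatedly bipartitioning the current tube, and by the definition of leaf tubing each such bipartition separates off a single vertex which must be a leaf of the subtree induced by the current tube. So $\tau$ determines a sequence $v_1, v_2, \ldots, v_{n-1}$ of vertices, where $v_i$ is the vertex peeled off at the $i$-th step; the remaining vertex after step $n-1$ is the root $r$. Define a labelling $\ell \colon V(t) \to \{1,\ldots,n\}$ by $\ell(v_i) = i$ and $\ell(r) = n$. I would verify this is a decreasing labelling: if $u$ is a child of $v$ in $t$, then $u$ must be peeled off before $v$ (since at the moment $v$ is removed it has to be a leaf of the induced subtree, so all its original children have already been removed). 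Hence $\ell(u) < \ell(v)$.

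Next, for the inverse direction, given a decreasing labelling $\ell$, I would define the tubing by reversing this recipe: at step $i$ remove the vertex $v_i := \ell^{-1}(i)$ from the current subtree, forming the singleton tube $\{v_i\}$ and the remaining tube $t \setminus \{v_1,\ldots,v_i\}$, for $i=1,\ldots,n-1$. The only thing to check is that $v_i$ is indeed a leaf of the subtree $t_{i-1} := t \setminus \{v_1,\ldots,v_{i-1}\}$. Any child $u$ of $v_i$ in the induced subtree is also a child of $v_i$ in $t$, so $\ell(u) < \ell(v_i) = i$, which means $u$ has already been removed — contradiction. Hence $v_i$ is a leaf of $t_{i-1}$, the process is well defined, and it produces a valid leaf tubing.

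Finally, I would observe that these two constructions are mutually inverse essentially by construction: the labelling produced from the tubing built from $\ell$ agrees with $\ell$, and vice versa. This gives the required equality of cardinalities. The main (and only mild) subtlety is the verification that every intermediate $v_i$ is a leaf of the correct induced subtree; both directions hinge on this simple observation about the decreasing property along parent-child edges, so no serious obstacle arises.
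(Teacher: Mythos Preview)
Your proof is correct and uses essentially the same bijection as the paper: both associate to a leaf tubing the labelling in which the vertex peeled off at step $i$ receives label $i$ (the paper phrases this statically as ``one less than the number of tubes containing the vertex,'' which equals the peel-off index). Your version is in fact slightly more complete, since you construct the inverse map explicitly and verify it is well defined, whereas the paper only checks that the forward map lands in the set of decreasing labellings and leaves injectivity/surjectivity implicit.
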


Note that when a tube is divided into a single leaf and the rest, that leaf is a leaf in the induced subgraph of the vertices of the tube, and may or may not be a leaf in the full tree.

\begin{proof}

Label the root by the number of tubes it is contained in and label every other vertex by one less than the number of tubes it is contained in.  The claim is that this gives a decreasing labelling.

The leaf partitioned off at the first step is in exactly 2 tubes, the outermost tube and its own tube.  The single leaf partitioned of the remaining part at the next step (which may or may not be a leaf of the full tree), is in exactly 3 tubes, the outermost tube, the root tube from the first partitioning and its own tube.  Inductively, the leaf partitioned off at the $i$th step is in exactly $i+1$ tubes.  At the last step, which is the $n-1$st step, both the root and the leaf partitioned off are in $n$ tubes.   Therefore, the labelling given above does label the vertices by $1,\ldots, n$ with no repetition.

It remains to check that every child's label is smaller than its parent's label.  This is true of the root and the vertex partitioned off from the root at the last step since each is in $n$ tubes, but only the non-root has this number decreased to get its label.  For every other pair of parent and child, the two vertices must be in a different number of tubes so it suffices to check that the child is in no more tubes than the parent.  To see this, note that by the choice of kind of tubing we are considering, every tube of size $>1$ includes the root and this means that a child can be in as most as many tubes of size $>1$ as its parent, then that the fact that every vertex is in exactly one tube of size 1 gives the result.

\end{proof}

Under the bijection with rooted connected chord diagrams we will define in \cref{sec chord}, the special tubings considered in this example correspond to a particularly interesting class of chord diagrams; see \cref{prop interesting chord classes}.

\medskip

Another interesting pair of examples comes from looking at special trees, such as ladders and corollas, rather than special tubings.

\subsection{Ladders}
\label{sec ladders}

Recall the ladders, $\ell_n$ from \cref{ex dse trees ladder}, which arise as the solution of a linear DSE. For concreteness, we assume that there is only one cocycle; otherwise the vertices of the ladders would be decorated. The Yukawa rainbows discussed in \cref{eg Yukawa ladders physically} give one such example, but we make no assumption on $F(\rho)$ at this point.
Now we have $s=0$  (for being linear) and $k=1$ (for having only one cocycle) in the combinatorial DSE \cref{dse algebraic general}:
\begin{align}\label{eq lin comb DSE}
   T(x) = 1+ x B_+ \big( T(x) \big).
\end{align}
The generating function of ladders, that is, the series $T(x) = \sum_{n\geq 0}x^n\ell_n$, is the unique power series solution of \cref{eq lin comb DSE}, as can be seen iteratively, starting from $B_+(1)=\rtLine 1$.
In this way, we say that \cref{eq lin comb DSE} generates the family of ladders.  The physical DSE corresponding to \cref{eq lin comb DSE} is  
\[
G(x,L)=1+xG(x, \partial_\rho)(e^{L\rho}-1)F(\rho)\bigg|_{\rho=0}
\]
which are \cref{dse_differential_general,analytic dse} with $k=1$ and $s=0$.

\medskip

\begin{figure}[htb]
    \centering
    \begin{tikzpicture}

        \coordinate (v1) at (0,1.5){};
        \coordinate (v2) at (0,1){};
        \coordinate (v3) at (0,.5){};
        \coordinate (v4) at (0,0){};
        \draw [tube2] (v1) -- (v3);
        \draw [tube1] (v1) -- (v2);
        \draw [edge] (v1) to (v2) to (v3) to (v4);
        \node [vertex] at (v1){};
        \node [vertex] at (v2){};
        \node [vertex] at (v3){};
        \node [vertex] at (v4){};

        \coordinate (v1) at (1.5,1.5){};
        \coordinate (v2) at (1.5,1){};
        \coordinate (v3) at (1.5,.5){};
        \coordinate (v4) at (1.5,0){};
        \draw [tube2] (v1) -- (v3);
        \draw [tube1] (v2) -- (v3);
        \draw [edge] (v1) to (v2) to (v3) to (v4);
        \node [vertex] at (v1){};
        \node [vertex] at (v2){};
        \node [vertex] at (v3){};
        \node [vertex] at (v4){};

        \coordinate (v1) at (3,1.5){};
        \coordinate (v2) at (3,1){};
        \coordinate (v3) at (3,.5){};
        \coordinate (v4) at (3,0){};
        \draw [tube2] (v2) -- (v4);
        \draw [tube1] (v2) -- (v3);
        \draw [edge] (v1) to (v2) to (v3) to (v4);
        \node [vertex] at (v1){};
        \node [vertex] at (v2){};
        \node [vertex] at (v3){};
        \node [vertex] at (v4){};

        \coordinate (v1) at (4.5,1.5){};
        \coordinate (v2) at (4.5,1){};
        \coordinate (v3) at (4.5,.5){};
        \coordinate (v4) at (4.5,0){};
        \draw [tube2] (v2) -- (v4);
        \draw [tube1] (v3) -- (v4);
        \draw [edge] (v1) to (v2) to (v3) to (v4);
        \node [vertex] at (v1){};
        \node [vertex] at (v2){};
        \node [vertex] at (v3){};
        \node [vertex] at (v4){};

        \coordinate (v1) at (6,1.5){};
        \coordinate (v2) at (6,1){};
        \coordinate (v3) at (6,.5){};
        \coordinate (v4) at (6,0){};
        \draw [tube1] (v1) -- (v2);
        \draw [tube1] (v3) -- (v4);
        \draw [edge] (v1) to (v2) to (v3) to (v4);
        \node [vertex] at (v1){};
        \node [vertex] at (v2){};
        \node [vertex] at (v3){};
        \node [vertex] at (v4){};
    \end{tikzpicture}
    \caption{The five different tubings of $\ell_4$. Note that we skipped the inner- and outermost tubes.}
    \label{figure ladders}
\end{figure}
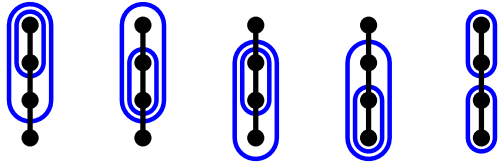

Let us consider the content of the main theorem (\cref{thm main thm}) in this case. 
The tubings of ladders can be described very nicely. We saw in \cref{fig:small_example,figure ladders} that $\ell_3$ has two and $\ell_4$ has five tubings.

\begin{lemma}\label{lem ladders tubings}
For $n\geq0$, $N(\ell_{n+1})=C_n$, where $C_n = \frac{1}{n+1}\binom{2n}{n} $ are the Catalan numbers.
\end{lemma}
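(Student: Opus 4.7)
The plan is to apply \cref{lem recursive tubing count} directly. The ladder $\ell_{n+1}$ has exactly $n$ edges, and cutting the $i$-th edge from the root (for $1 \le i \le n$) separates $\ell_{n+1}$ into a top piece that is itself a ladder $\ell_i$ (rooted at the original root) and a bottom piece that is a ladder $\ell_{n+1-i}$ (rooted at what was the lower endpoint of the cut edge). Because each piece is again a ladder, \cref{lem recursive tubing count} yields
\begin{equation*}
    N(\ell_{n+1}) = \sum_{i=1}^{n} N(\ell_i)\, N(\ell_{n+1-i}).
\end{equation*}

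Setting $a_n := N(\ell_{n+1})$ for $n \ge 0$, this becomes
\begin{equation*}
    a_n = \sum_{i=1}^{n} a_{i-1}\, a_{n-i} = \sum_{j=0}^{n-1} a_j\, a_{n-1-j},
\end{equation*}
with base case $a_0 = N(\ell_1) = 1$ since the single-vertex tree has the unique tubing consisting of one tube. This is exactly the Catalan recurrence $C_n = \sum_{j=0}^{n-1} C_j C_{n-1-j}$ with $C_0 = 1$, so induction on $n$ gives $a_n = C_n$, as desired.

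No step is really an obstacle here: the main content is already packaged in \cref{lem recursive tubing count}, and the key observation is simply that both pieces produced by cutting any edge of a ladder are themselves ladders, which is what makes the recursion close. An alternative, even more conceptual route would be to invoke \cref{rem tubings binary tree}: tubings of $\ell_{n+1}$ correspond bijectively to full binary rooted trees with left/right children distinguished and with $n+1$ leaves in a fixed linear order (the order coming from the ladder), since every partition of a tube in a ladder must cut a unique edge and thus split the underlying vertex sequence into two nonempty contiguous subsequences. Such full binary trees are counted by $C_n$, recovering the result.
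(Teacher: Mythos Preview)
Your proof is correct and follows essentially the same approach as the paper: both apply \cref{lem recursive tubing count}, observe that cutting any edge of a ladder yields two smaller ladders, and identify the resulting recursion as the Catalan recurrence. Your presentation is in fact slightly cleaner, needing only the single base case $N(\ell_1)=1$, and your alternative bijective remark via \cref{rem tubings binary tree} anticipates the paper's later \cref{rem ladders trees}.
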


\begin{proof}
We use induction. There is exactly one ladder tree $\ell_n$ for each $n\in \mathbb{N}$. Removing one edge $e$ amounts to splitting $\ell_n$ into two ladders, $\ell_n = \ell_{j} \cup e \cup \ell_{n-j}$, where $0< j <n$. 

As seen explicitly above, the claim of the lemma holds for $\ell_{n\leq 4}$. Assume, $N(\ell_n)= C_{n-1}$ holds up to some number $n$ of vertices. Then, by \cref{lem recursive tubing count}, 
\begin{align*}
    N(\ell_{n+1}) = \sum_{j=1}^{n} N(\ell_j)N(\ell_{n+1-j}) = \sum_{j=0}^{n-1} C_{j} C_{n-1-j} = C_n.
\end{align*}
The sum is the well-known recursive definition of the Catalan numbers \cite{stanley_catalan_2015}.
\end{proof}

\begin{example}
    In the special case of the Yukawa theory, \cref{eg Yukawa ladders physically}, the Mellin transform \cref{mellin_transform} assigns the value $c_i=-(-1)^i$ to all $i$. Any tubing of $\ell_n$ contains exactly $2n-1$ tubes, the Mellin monomial (\cref{def Mellin monomial}) along with the order $L^1$ factor for the root, $c(\tau) c_{b(\tau)-1}$, involves exactly $n$ factors. The sum of the indices of these factors is therefore $(2n-1)-n\cdot 1 = n-1$ and the product of the Mellin coefficients is $(-1)$ for every tubing of $\ell_n$.  By \cref{thm main thm}, the contribution of any tubing $\tau$ of a ladder to the anomalous dimension is $[L^1] \phi_L(\tau) =-1$.
Consequently, the contribution of $\ell_n$ to the $L^1$ coefficient of $G(x,L)$, that is to the anomalous dimension, is exactly the number of tubings of $\ell_n$, namely  $-1\cdot C_{n-1}$. The constant $\ell_0=1$ does not contribute to the anomalous dimension. Summing over all orders $n\in [1,\infty)$, we obtain the generating function of the Catalan numbers, which reproduces  \cref{eg Yukawa ladders physically}:
\begin{align*}
    [L^1] G(x,L) = \gamma(x) = \sum_{n=1}^\infty (-1)  C_{n-1}  x^n = \frac{\sqrt{1-4x}-1}{2}.
\end{align*}
\end{example}

\noindent
For the ladder trees, it is possible to find an explicit formula for the sum of their tubings:

\begin{proposition}\label{prop tubings ladder}
    Consider a single Mellin transform \cref{mellin_transform} $F(\rho)=\sum_{j\geq 0}c_j \rho^{j-1}$ (that is, unweighted trees) and let $\ell_n$ be the ladder tree on $n$ vertices.  Then, the contribution of  $\ell_n$ to the anomalous dimension is
        \begin{align*}
             \sum_{\tau \in \Tub(\ell_n)} [L^1] \phi_L(\tau)=   \sum_{j=1}^{n-1} \frac{c_0^{n-j}}{(n-j)!} B_{n-1,j} (1! c_1, 2! c_2, 3! c_3, \ldots).
        \end{align*}
\end{proposition}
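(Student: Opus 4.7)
The plan is to recognize the right-hand side as the coefficient extraction formula of \cref{lem linear coefficients}, and to identify the tubings of $\ell_n$ as indexing exactly the contributions to the anomalous dimension of the linear, single-cocycle DSE that produces ladders.

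First, I would invoke \cref{ex dse trees ladder}: the unique combinatorial Dyson--Schwinger equation whose solution $T(x)$ is the series of ladders corresponds to the choice $s=0$ and a single cocycle ($k=1$ only) in \cref{dse algebraic general}. By \cref{thm main thm} applied to this case, the $L^1$ coefficient of $G(x,L)=\phi(T(x))$ is the anomalous dimension $\gamma(x)$. Since $s=0$, the product $\prod_{v\in V(\ell_n)}(1+sw(v))_{\od v}$ collapses to $1$ (every internal ladder vertex has out-degree $1$ and $(1)_1=1$, while leaves give $(1)_0=1$), and $|\Aut(\ell_n)|=1$, so part 3 of \cref{thm main thm} specializes to
\[
    [x^n]\gamma(x) \;=\; \sum_{\tau\in\Tub(\ell_n)}\prod_{v\in V(\ell_n)} c_{b(v,\tau)-1}.
\]

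Second, I would rewrite the right-hand side of this display as $\sum_\tau [L^1]\phi_L(\tau)$. From the definition of $\phi_L(\tau)$ in part 1 of \cref{thm main thm},
\[
    [L^1]\phi_L(\tau) \;=\; c(\tau)\, c_{b(\tau)-1}
    \;=\; \Bigl(\prod_{v\neq\rootv(\ell_n)} c_{b(v,\tau)-1}\Bigr)\, c_{b(\rootv,\tau)-1}
    \;=\; \prod_{v\in V(\ell_n)} c_{b(v,\tau)-1},
\]
so the two tubing sums agree term by term.

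Third, I would apply \cref{lem linear coefficients} directly: since the DSE in question is linear with the single Mellin transform $F(\rho)=\sum_{j\geq 0}c_j\rho^{j-1}$, that lemma gives
\[
    [x^n]\gamma(x) \;=\; \sum_{j=1}^{n-1}\frac{c_0^{n-j}}{(n-j)!}\, B_{n-1,j}(1!c_1,2!c_2,3!c_3,\ldots).
\]
Combining the two equal expressions for $[x^n]\gamma(x)$ yields the proposition.

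There is no real obstacle here; the statement is essentially a repackaging of \cref{lem linear coefficients} through the combinatorial lens provided by \cref{thm main thm}. The only point to be careful about is ensuring that the prefactors in \cref{thm main thm} genuinely trivialize for ladders at $s=0$, and that the root vertex is included correctly when converting between $\prod_v c_{b(v,\tau)-1}$ and $[L^1]\phi_L(\tau)$; both are routine checks from the definitions.
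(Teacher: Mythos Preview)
Your proposal is correct and follows essentially the same approach as the paper: both recognize that ladders arise from the linear ($s=0$), single-cocycle DSE, specialize \cref{thm main thm} so that the automorphism and falling-factorial prefactors trivialize, and then equate the resulting tubing expression for $[x^n]\gamma(x)$ with the Bell-polynomial formula of \cref{lem linear coefficients}. Your explicit verification that $[L^1]\phi_L(\tau)=\prod_{v}c_{b(v,\tau)-1}$ is a nice touch that the paper leaves implicit.
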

\begin{proof}
	This statement is a special case of point 3 of \cref{thm main thm}. 
	Recall that we are still working in the case of trees without explicit weights. As defined above \cref{def tube}, the quantity $w(t)$ of a tree  in that case simply amounts to the number of vertices. As $\ell_n$ has $n$ vertices, we have $w(\ell_n)=n$. 
 There is exactly one ladder tree $\ell_n$ for each $n\in \mathbb{N}$. Consequently, a sum over all trees $t$ in \cref{thm main thm} amounts to a sum over $n\in \mathbb N$.
 
 As discussed in \cref{ex dse trees ladder}, ladders correspond to the choice $s=0$ in the   DSE \cref{dse_differential_general}, and hence in \cref{thm main thm}.  Except for the leaf, every vertex has  $\od v=1$ children, and the falling factorials (defined in \cref{def falling factorial}) become $(1)_1=1=(1)_0$ for all vertices.  Therefore, the Green function $G(x,L)$ as given by point 2 of \cref{thm main thm} reduces to a sum over $\phi_L(\tau)$, 
 \begin{align*}
 G(x,L) &= 1+\sum_{n=1}^\infty \frac{x^n}{1} \left( \prod_{v\in V(\ell_n)} 1 \right) \sum_{\tau \in \Tub(\ell_n)} \phi_L(\tau) = 1+\sum_{n=1}^\infty  x^n  \sum_{\tau \in \Tub(\ell_n)} \phi_L(\tau).
 \end{align*}
 Recall that here, and in all computations in this paper, the order-zero term of $G(x,L)$ is unity by definition since we are using kinematic renormalization conditions.
 By \cref{G_log_expansion}, the term of order $L^1$ in  $G(x,L)$ is the anomalous dimension. Using point 3 of \cref{thm main thm} (in the unweighted case), the term in question is 
 \begin{align*}
    \sum_{n=1}^\infty x^n  [L^1] \phi_L(\ell_n)=\gamma(x)  = \sum_{n=1}^{\infty} x^n \sum_{\tau \in \Tub(\ell_n)}\prod_{ v\in V(\ell_n)} c_{b(v,\tau)-1} .
  \end{align*}
 On the other hand, the coefficients of the solution of a linear DSE are given by \cref{lem linear coefficients}. Equating the two formulas for $\gamma(x)$ and comparing coefficients results in the claimed expression for the contribution of a particular ladder $\ell_n$.
\end{proof}

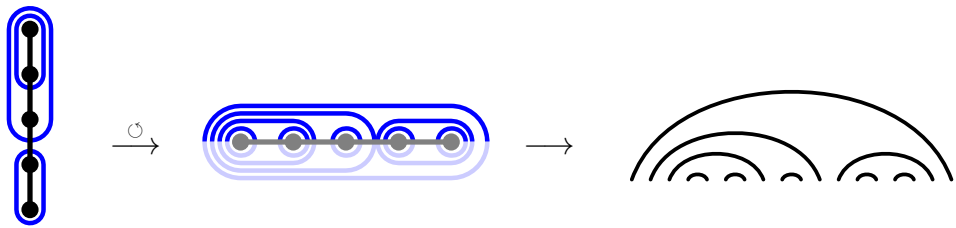
\begin{figure}[htb]
    \centering
    \begin{tikzpicture}

        \coordinate (v1) at (0,0){};
        \coordinate (v2) at (0,-.6){};
        \coordinate (v3) at (0,-1.2){};
        \coordinate (v4) at (0,-1.8){};
        \coordinate (v5) at (0,-2.4){};
        \draw [tube2] (v1) -- (v3);
        \draw [tube1] (v1) -- (v2);
        \draw [tube1] (v4) -- (v5);
        \draw [edge] (v1) to (v2) to (v3) to (v4) to (v5);
        \node [vertex] at (v1){};
        \node [vertex] at (v2){};
        \node [vertex] at (v3){};
        \node [vertex] at (v4){};
        \node [vertex] at (v5){};

        \node[anchor=south] at (1.4, -1.8) {$\overset \circlearrowleft \longrightarrow$};

        \coordinate (v1) at (2.8,-1.5){};
        \coordinate (v2) at (3.5,-1.5){};
        \coordinate (v3) at (4.2,-1.5){};
        \coordinate (v4) at (4.9,-1.5){};
        \coordinate (v5) at (5.6,-1.5){};
        \draw [blue, line width = 10.2mm, line cap=round] (v1) -- (v5);
        \draw [white, line width = 9mm,line cap=round] (v1) -- (v5);
        \draw [tube3] (v1) -- (v3);
        \draw [tube2] (v1) -- (v2);
        \draw [tube2] (v4) -- (v5);
        \draw [tube1] (v1) -- (v1);
        \draw [tube1] (v2) -- (v2);
        \draw [tube1] (v3) -- (v3);
        \draw [tube1] (v4) -- (v4);
        \draw [tube1] (v5) -- (v5);
    
        \draw [white, fill=white, opacity=.8] (2.2,-1.5) rectangle ( 6.5,-2.5);
        \draw [edge,gray] (v1) to (v2) to (v3) to (v4) to (v5);
        \node [vertex,gray] at (v1){};
        \node [vertex,gray] at (v2){};
        \node [vertex,gray] at (v3){};
        \node [vertex,gray] at (v4){};
        \node [vertex,gray] at (v5){};

        \node[anchor=south] at (6.9, -1.8) {$\longrightarrow$};

        \coordinate (v1) at (8,-2){};
        \coordinate (v2) at (8.25,-2){};
        \coordinate (v3) at (8.5,-2){};
        \coordinate (v4) at (8.75,-2){};
        \coordinate (v5) at (9,-2){};
        \coordinate (v6) at (9.25,-2){};
        \coordinate (v7) at (9.5,-2){};
        \coordinate (v8) at (9.75,-2){};
        \coordinate (v9) at (10,-2){};
        \coordinate (v10) at (10.25,-2){};
        \coordinate (v11) at (10.5,-2){};
        \coordinate (v12) at (10.75,-2){};
        \coordinate (v13) at (11,-2){};
        \coordinate (v14) at (11.25,-2){};
        \coordinate (v15) at (11.5,-2){};
        \coordinate (v16) at (11.75,-2){};
        \coordinate (v17) at (12,-2){};
        \coordinate (v18) at (12.25,-2){};
            
        \draw [chord] (v1) to (v18);
        \draw [chord] (v2) to (v11);
        \draw [chord] (v3) to (v8);
        \draw [chord] (v4) to (v5);
        \draw [chord] (v6) to (v7);
        \draw [chord] (v9) to (v10);
        \draw [chord] (v12) to (v17);
        \draw [chord] (v13) to (v14);
        \draw [chord] (v15) to (v16);
    \end{tikzpicture}
    \caption{Turning a tubing of $\ell_5$ into a non-crossing chord diagram $C$. Observe that, as usual, we skipped the innermost and outermost tubes in the figure on the left.}
    \label{fig:ladders_chords}
\end{figure}

\begin{remark}\label{rem ladders chord diagrams}

The tubings $\tau$ of a ladder $\ell_n$ are in bijection to binary non-crossing chord diagrams $C$, that is, chord diagrams (\cref{rooted chord diagram}) where the chords do not intersect and below every chord, there are either exactly two or zero smaller chords. This bijection is constructed graphically by turning $\ell_n$ horizontally and removing the lower halves of all tubes of $\tau$. The remaining arcs form the chords of the chord diagram $C$, see \cref{fig:ladders_chords}. 

Later, in \cref{sec chord}, we construct a bijection $\theta$ (\cref{map tubed RTs to connected CDs}) between binary tubings and chord diagrams which is different from the \enquote{naive} mapping to $C$ shown in \cref{fig:ladders_chords}, compare \cref{figure bijection example ladders}. Namely, $C$ contains one chord for every tube, while the latter map gives one chord for every vertex.  Restricting $\theta$ to only those chord diagrams obtained from tubings of ladders will define a different Catalan subclass of chord diagrams, see \cref{prop interesting chord classes}. 

\end{remark}
\begin{remark}\label{rem ladders trees}
 
    As seen in \cref{rem tubings binary tree}, a binary tubing $\tau$ has the structure of a binary rooted tree $B$, where the tubes correspond to vertices. For a generic rooted tree $t$, the underlying rooted tree structure can not be recovered from the corresponding $B$ alone, only the information contained in the sizes of tubes.  However, for $t=\ell_n$ this information is sufficient; the sizes of the tubes is the only information needed to reconstruct $t$. Given a tube containing $\ell_n$, there is one way to partition it into subtubes containing $\ell_i$ and $\ell_{n-i}$ as non-root and root part respectively.  Thus every full binary tree on $n$ vertices appears through this construction from a tubing of $\ell_n$ and every tubing of $\ell_n$ gives a distinct full binary tree, see \cref{fig:ladders_trees}.

The binary tree $B$  is related to the chord diagram $C$ constructed in \cref{rem ladders chord diagrams}: If $C$ were a Yukawa-type Feynman graph, then $B$ would be the corresponding insertion tree by the map $d$ constructed in \cref{eg yukawa insertion trees}. This correspondence is coincidence and comes from the fact that both $C$ and $B$ are equivalent ways of drawing a tree structure, it is \emph{not} related to Yukawa theory. In particular, the Feynman graphs corresponding to ladders $\ell_n$ are rainbows (\cref{eg yukawa 1}), which are generally not of the shape $C$.
\end{remark}

\begin{figure}[htb]
    \centering
    \begin{tikzpicture}

        \coordinate (v1) at (0,0){};
        \coordinate (v2) at (0,-.6){};
        \coordinate (v3) at (0,-1.2){};
        \coordinate (v4) at (0,-1.8){};
        \coordinate (v5) at (0,-2.4){};
        \draw [tube2] (v1) -- (v3);
        \draw [tube1] (v1) -- (v2);
        \draw [tube1] (v4) -- (v5);
        \draw [edge] (v1) to (v2) to (v3) to (v4) to (v5);
        \node [vertex] at (v1){};
        \node [vertex] at (v2){};
        \node [vertex] at (v3){};
        \node [vertex] at (v4){};
        \node [vertex] at (v5){};

        \node at (1.5, -1.5) {$\rightarrow$};

        \coordinate (v1) at (2.8,-2){};
        \coordinate (v2) at (3.5,-2){};
        \coordinate (v3) at (4.2,-2){};
        \coordinate (v4) at (4.9,-2){};
        \coordinate (v5) at (5.6,-2){};

        \draw [line width = .7mm, blue,rounded corners=3mm,line cap=round,opacity=.4] ($(v1) + (-.25,.5)$) rectangle ($(v2) + (.25,-.5)$);
        \draw [line width = .7mm, blue,rounded corners=5mm,line cap=round,opacity=.4] ($(v1) + (-.4,1)$) rectangle ($(v3) + (.25,-.5)$);
        \draw [line width = .7mm, blue,rounded corners=3mm,line cap=round,opacity=.4] ($(v4) + (-.25,.5)$) rectangle ($(v5) + (.25,-.5)$);
        \draw [line width = .7mm, blue,rounded corners=7mm,line cap=round,opacity=.4] ($(v1) + (-.55,1.5)$) rectangle ($(v5) + (.4,-.8)$);

        \draw [white, fill=white ] (2,-2.1) rectangle ( 6.5,-2.9);

        \node[vertex] at (3.15, -1.5){};
        \node[vertex] at (3.5, -1){};
        \node[vertex] at (5.3, -1.5){};
        \node[vertex] at (4.2, -.5){};

        \draw [edge,gray] (v1) to (v2) to (v3) to (v4) to (v5);
        \node [vertex] at (v1){};
        \node [vertex] at (v2){};
        \node [vertex] at (v3){};
        \node [vertex] at (v4){};
        \node [vertex] at (v5){};

        \node at (6.75, -1.5) {$\rightarrow$};

        \coordinate (v1) at (8,-2.5){};
        \coordinate (v2) at (9,-2.5){};
        \coordinate (v3) at (10,-2.5){};
        \coordinate (v4) at (11,-2.5){};
        \coordinate (v5) at (12,-2.5){};

        \node [vertex] at (v1){};
        \node [vertex] at (v2){};
        \node [vertex] at (v3){};
        \node [vertex] at (v4){};
        \node [vertex] at (v5){};

        \coordinate (v6) at (8.5,-1.7){};
        \coordinate (v7) at (9,-.8){};
        \coordinate (v8) at (11.5,-1.2){};
        \coordinate (v9) at (10,0){};
    
        \node [vertex] at (v6){};
        \node [vertex] at (v7){};
        \node [vertex] at (v8){};
        \node [vertex] at (v9){};

        \draw[edge] (v9)--(v8);
        \draw[edge] (v9)--(v7);
        \draw[edge] (v7)--(v6);
        \draw[edge] (v7)--(v3);
        \draw[edge] (v6)--(v1);
        \draw[edge] (v6)--(v2);
        \draw[edge] (v8)--(v4);
        \draw[edge] (v8)--(v5);
    \end{tikzpicture}
    \caption{Turning a tubing of $\ell_5$ into a binary tree. Every tube is identified with one vertex of the binary tree. Observe that we skipped the innermost and outermost tubes in the figure on the left.}
    \label{fig:ladders_trees}
\end{figure}
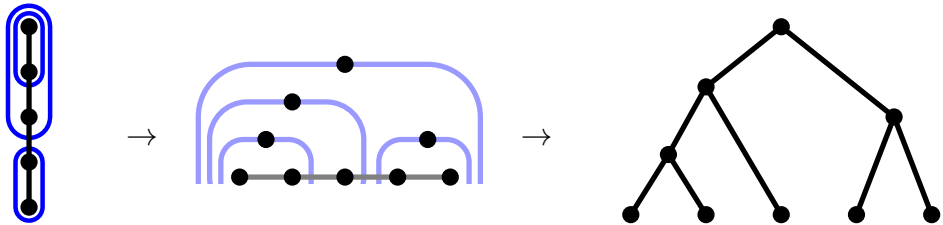

\FloatBarrier

\subsection{Corollas or chains} \label{sec corollas}

Corollas are trees where all non-root vertices are leaves.
\[
s_0 = 1, \qquad  s_1  =\rtLine 1, \qquad s_2 = \rtLine 2, \qquad s_3 = \rtV, \qquad s_4 = \rtW, \ldots
\]

The tubings of $s_n$ are easy to describe.  The only way to partition a corolla into two pieces, both of which are connected, is to have one piece be the corolla of size one smaller and the other piece be a single leaf.  A tubing then is a choice of leaf at this outermost level, a choice of one of the remaining leaves at the next level, and so on, see \cref{figure corollas}.  That is, tubings of $s_n$ are in bijection with permutations of the leaves.  Consequently, using \cref{lem recursive tubing count}, the number of tubings is $N(s_{n+1}) = n\cdot N(s_n)$. This proves the following lemma.

\begin{lemma}\label{lem chains tubings}
$s_{n+1}$ has $n!$ tubings, $N(s_{n+1})=n!$
\end{lemma}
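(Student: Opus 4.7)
The plan is to induct on $n$ using the recursive tubing count in \cref{lem recursive tubing count}. The base case $N(s_1) = 1 = 0!$ is immediate, since the single-vertex tree $s_1 = \bullet$ admits only the trivial tubing consisting of the (simultaneously outermost and innermost) tube.

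For the inductive step, I would rely on the key structural fact that every edge of $s_{n+1}$ joins the root to a leaf. Consequently, removing any edge $e$ splits $s_{n+1}$ into a single vertex $\bullet$ and a corolla isomorphic to $s_n$. Since $s_{n+1}$ has exactly $n$ edges and $N(\bullet) = 1$, \cref{lem recursive tubing count} gives
\[
N(s_{n+1}) \;=\; \sum_{e \in E(s_{n+1})} N(\bullet)\cdot N(s_n) \;=\; n\cdot N(s_n).
\]
Combined with the inductive hypothesis $N(s_n) = (n-1)!$, this yields $N(s_{n+1}) = n \cdot (n-1)! = n!$, as desired.

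A bijective rephrasing (essentially the discussion preceding the statement) is also worth recording: by the recursive characterization in \cref{rem tau prime and tau double prime}, a tubing of $s_{n+1}$ is determined by first choosing which leaf gets split off at the outermost partition and then tubing the remaining corolla $s_n$; iterating produces an ordered list of the $n$ leaves, that is, a permutation. There is no serious obstacle; the only thing worth explicitly verifying is the claim used above, that any bipartition of a corolla into two connected tubes must isolate a single leaf. This holds because the leaves of a corolla are pairwise non-adjacent, so any connected subgraph containing two or more leaves must also contain the root, forcing the complementary tube to be a single leaf.
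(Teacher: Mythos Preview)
Your proof is correct and follows essentially the same approach as the paper: both use the structural fact that any bipartition of a corolla into two connected tubes must isolate a single leaf, then apply \cref{lem recursive tubing count} to obtain the recursion $N(s_{n+1}) = n\cdot N(s_n)$, with the bijection to permutations of leaves noted alongside. Your write-up is slightly more explicit about the base case and the justification of the structural fact, but the argument is the same.
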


\begin{figure}[htb]
    \centering
    \input{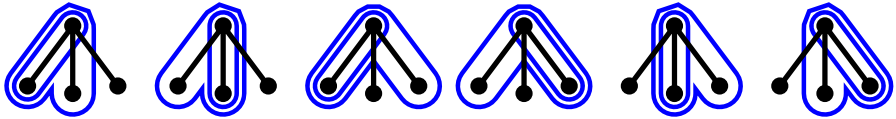}
    \caption{The six different tubings of $s_4$.}
    \label{figure corollas}
\end{figure}

The analogue of \cref{prop tubings ladder} for the case of corollas follows directly from the above characterization of the tubings. 
The exact form for the contribution of the corollas to $G(x,L)$ in \cref{thm main thm} (in the unweighted case) is
\begin{align}
    \phi_L(s_n) = n!c_0^{n-1}\left(\sum_{i=1}^{n}c_{n-i} \frac{L^i}{i!}\right),
\end{align}
and in particular the contribution of $c_n$ to the anomalous dimension is $n!c_0^{n-1}c_{n-1}L$.

As Feynman graphs, the corollas correspond to a chain of one loop bubbles inside one outer insertion.  The sum of them amounts to the \emph{chain approximation}. Unlike the ladder approximation, the chain approximation does not arise from a Dyson--Schwinger equation and so does not have all the properties one would want, physically, see \cite{balduf_dyson_2023}. Nonetheless, it can serve as an interesting mathematical example.

Ladders (\cref{lem ladders tubings}) and corollas (chains) (\cref{lem chains tubings}) lead to the following bound.  Another approach to these bounds is via the connection to associahedra in \cref{sec associahedra} and then using vertex counting results on polytopes, but that would bring us too far afield. 

\begin{lemma}\label{tubings_bound}
The number of tubings $N(t)$ of a rooted tree $t$ with $n$ vertices is bounded between the Catalan number $C_{n-1}$ and the factorial $(n-1)!$
\end{lemma}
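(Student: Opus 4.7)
Both bounds proceed by strong induction on $n$ using the edge-cutting recursion from \cref{lem recursive tubing count}, namely $N(t) = \sum_{e \in E(t)} N(t'_e) N(t''_e)$ where $(t'_e, t''_e)$ are the two components of $t - e$ of sizes $(k_e, n - k_e)$. The base case $n = 1$ trivially gives $N(\bullet) = 1 = 0! = C_0$.

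For the upper bound, the inductive hypothesis yields $N(t'_e) N(t''_e) \leq (k_e - 1)!\,(n - k_e - 1)!$. Since $\binom{n-2}{k_e - 1} \geq 1$ for every $k_e \in \{1, \ldots, n-1\}$, each such product is at most $(n-2)!$, and summing over the $n - 1$ edges of $t$ gives $N(t) \leq (n-1)(n-2)! = (n-1)!$. Equality holds exactly when every edge satisfies $k_e \in \{1, n-1\}$, which recovers the corolla case from \cref{lem chains tubings}.

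For the lower bound, the inductive hypothesis yields $N(t) \geq \sum_e C_{k_e - 1} C_{n - k_e - 1}$, so by the Catalan recursion $C_{n-1} = \sum_{k=1}^{n-1} C_{k-1} C_{n-k-1}$ the task reduces to proving
\[
  \sum_{e \in E(t)} C_{k_e - 1} C_{n - k_e - 1} \;\geq\; \sum_{k=1}^{n-1} C_{k-1} C_{n-k-1}. \qquad (\ast)
\]
This compares the edge-split multiset of $t$ with the uniform set $\{1, \ldots, n-1\}$ realized by the path $\ell_n$; since the weight $f(k) := C_{k-1} C_{n-k-1}$ is symmetric about $n/2$ and $U$-shaped, $(\ast)$ asserts that no tree can be more balanced at every level than the path.

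My plan for $(\ast)$ is to root $t$ at a centroid $c$, which confines every non-root subtree size $n_v$ to $\{1, \ldots, \lfloor n/2 \rfloor\}$, the range on which $f$ is non-increasing. If $m_1, \ldots, m_d$ are the sizes of the subtrees at $c$, a short case analysis on how many of the $m_i$ exceed $k$ (using $\sum_i m_i = n - 1$ together with the centroid bound $m_i \leq n/2$) gives the refined estimate $\sum_i \min(k, m_i) \geq 2k$ for every $k \leq \lfloor (n-1)/2 \rfloor$. Combined with the classical in-subtree bound $n^{T_i}_{(j)} \leq j$ applied inside each subtree $T_i$, this yields the cumulative control $|\{e : k_e \leq k\}| \geq 2k$ in the same range. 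A single Abel summation against the non-increasing $f$ then converts this into $(\ast)$, with equality precisely when $t$ is a path. The delicate step is the refined centroid estimate, which is where the centroid hypothesis is essential: the general subtree-size bound $n_{v_{(i)}} \leq i$ alone is not sharp enough to overcome the $U$-shape of $f$, since it controls only the decreasing half.
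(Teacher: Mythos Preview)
Your upper bound matches the paper's exactly. For the lower bound, both you and the paper reduce to the same intermediate inequality --- that the cumulative edge-split counts satisfy $|\{e : k_e \leq k\}| \geq 2k$ for every $k \leq \lfloor(n-1)/2\rfloor$ --- and both finish by the same Abel/telescoping summation against the $U$-shaped weight $f(k) = C_{k-1}C_{n-k-1}$ (the paper proves the monotonicity $C_{i-1}C_{n-i-1} \leq C_{i-2}C_{n-i}$ for $i \leq n/2$ explicitly and then telescopes). The genuine difference is in how the cumulative inequality is established. The paper argues by contradiction via a tree modification: it takes a longest path $P$, locates the branching vertex $v$ nearest an end, and slides the extra branch $t_2$ hanging at $v$ to the far end of the short segment $P_1$; the resulting tree is strictly more evenly broken and has a longer longest path, contradicting an extremal choice of $t$. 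Your route is direct: rooting at a centroid forces every branch size $m_i \leq \lfloor n/2\rfloor$, the in-tree subtree-count lemma (at least $\min(k,m_i)$ vertices of $T_i$ have subtree size $\leq k$) supplies $\min(k,m_i)$ small splits per branch, and the three-case check on how many $m_i$ exceed $k$, together with $\sum_i m_i = n-1$, gives $\sum_i \min(k,m_i) \geq 2k$. Your argument is shorter and avoids the extremal/contradiction machinery; the paper's, in exchange, exhibits an explicit monotone chain of trees interpolating from any $t$ down to the ladder $\ell_n$, which is structurally informative even if not needed for the bare inequality.
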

The idea here is that the ladders and corollas are the extreme cases for the number of tubings a rooted tree can have.

\begin{proof}
We use induction. The claim is correct for all trees which contain 3 or fewer vertices, by explicit enumeration as shown above. Let $t$ be a tree which contains $k<n$ vertices, and assume that $C_{k-1}\leq N(t) \leq (k-1)!$ holds. We want to show that the same holds for a tree which has $n$ vertices. 
Using \cref{lem recursive tubing count}, the number of tubings of a tree $t$ is $ N(t) = \sum_{e\in E(t)} N(t_1) N(t_2)$, where $t_1,t_2$ are the two trees that arise from removing edge $e$. The trees $t_1$ and $t_2$ contain $1\leq n_1 <n$ and $n-n_1$ vertices respectively.

First, consider the upper bound. By assumption, $N(t_1) \leq (n_1-1)!$ and $N(t_2)\leq (n-n_1-1)!$. For a fixed $n$, the product $(|V(t_1)|-1)!(|V(t_2)|-1)!$ is a concave function, it reaches its maximum when either $n_1=1$ or $n_1=n-1$, and in both cases $\max\left((|V(t_1)|-1)!(|V(t_2)|-1)!\right) \leq 1\cdot (n-2)!$. The tree $t$ contains $(n-1)$ edges, hence, the sum in \cref{lem recursive tubing count} has $(n-1)$ terms. The maximum value it can attain is therefore
\begin{align*}
    N(t) &= \sum_{e\in E(t)} N(t_1) N(t_2) \leq (n-1) \cdot 1\cdot (n-2)!=(n-1)!
\end{align*}
as claimed. The upper bound corresponds to the corollas (\cref{lem chains tubings}), they are \enquote{as dense as possible}, removing any of the edges splits off only a single vertex.

\medskip

For the lower bound, first recall that by \cref{lem recursive tubing count} the number of tubings is not affected by the choice of root of the tree, that is, two rooted trees with the same underlying unrooted tree have the same number of tubings.

The next thing we need to consider is a vector recording how many ways a tree can be broken into pieces of particular sizes.  Specifically, given a tree $t$ with $n$ vertices (which may be unrooted or may be rooted or plane rooted, but neither the root nor the plane structure are needed), let $(n_1, n_2, n_3, \ldots, n_{\lfloor n/2\rfloor})$ be defined so that $n_i$ is the number of edges $e$ of $t$ with the property that removing $e$ leaves a tree of size $i$ and a tree of size $n-i$.

Let $t_1$ and $t_2$ be trees with $n$ vertices and $(n_1^{(1)}, n_2^{(1)}, \ldots, n_{\lfloor n/2 \rfloor}^{(1)})$ and $(n_1^{(2)}, n_2^{(2)}, \ldots, n_{\lfloor n/2 \rfloor}^{(2)})$ respectively the vectors defined above.  Say a tree $t_1$ is \emph{more evenly broken} than $t_2$ if for each $1\leq i \leq \lfloor n/2\rfloor$, $n_1^{(1)} + \cdots + n_i^{(1)} \leq n_1^{(2)} + \cdots + n_i^{(2)}$ and at least one of the inequalities is strict.

The claim is that no tree on $n$ vertices is more evenly broken than $\ell_n$.  To prove this claim let $t$ be any tree on $n$ vertices which does not have the same underlying unrooted tree as $\ell_n$, that is, $t$ is not a path in the graph theory sense.  

Suppose for a contradiction that $t$ is more evenly broken than $\ell_n$ and that the length of $t$'s longest path is longest among all trees on $n$ vertices which are more evenly broken than $\ell_n$.  Let $P$ be a longest path in $t$.  Since $t$ is not a path, there is at least one vertex on $P$ which has a neighbour not on $P$. Let $v$ be such a vertex that is as close a possible to an end of $P$.  Note that $v$ cannot be an end of $P$ since then $P$ would not be longest, and consequently $v$ cannot be a leaf.  Now we will consider three important connected subgraphs of of $t$.  Let $P_1$ be the part of $P$ from $v$ to the nearest end of $P$.  $P_1$ has at least 2 vertices, is a path, and  $v$ is the only vertex of $P_1$ with neighbours which are not in $P_1$.  Let $t_2$ be the component of $t-E(P)$ which includes $v$.  Note that $t_2$ contains at least two vertices and is a tree.  $t_2$ can be alternately characterized as the maximal subtree of $t$ containing $v$ but not containing any edges of $P$.  Finally, let $t_1$ be the subgraph of $t$ given by removing the edges of $P_1$ and the edges of $t_2$ and removing all isolated vertices this creates.  Note that $t_1$ is a tree and has at least as many vertices as $P_1$ by choice of $v$.   Also, note that $P_1$, $t_1$ and $t_2$ all include $v$ and identifying their three copies of $v$ we reobtain $t$. An example of this decomposition is shown in \cref{figure proof N}.

\begin{figure}[htb]
    \begin{center}
        \input{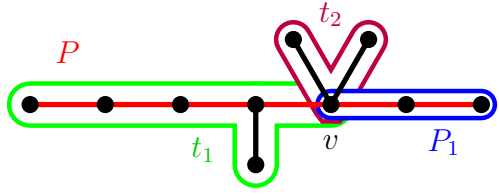}
    \end{center}
    \caption{Decomposition of a tree into $t_1, t_2$ and $P_1$. The longest path $P$ is indicated in red.}
    \label{figure proof N}
\end{figure}

Now using $P_1$, $t_1$ and $t_2$ build another tree $t'$ by identifying the copies of $v$ in $P_1$ and $t_1$ and identifying the copy of $v$ in $t_2$ with the other end of $P_1$.  Then $t'$ is more evenly broken than $t$.  To see this, removing any edge that is not in $P_1$ results in components of the same size in $t$ and in $t'$. Consider then the edge which breaks $P_1$ into a path of size $i_1$ including $v$ and a path of size $i_2$ not including $v$.  In $t$ cutting this edge gives a component of size $i_2$ and a component of size $|t_1| + i_1 + |t_2| -2$, while in $t'$ cutting this edge gives a component of size $|t_2|+i_2-1$ and a component of size $|t_1| + i_1-1$.  Now, $i_2 < |P_1| \leq |t_1|$ and $i_1$, $i_2$ and $|t_2|$ are each at least 1, so the smallest of the four components whose sizes are listed above is the one of size $i_2$, so this cut is less even in $t$ than in $t'$.  Putting all this together $t'$ is more evenly broken than $t$.

\begin{figure}[htb]
	\begin{center}
		\input{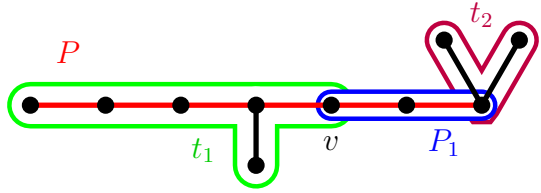}
	\end{center}
	\caption{The tree $t'$ constructed from recombining the pieces $t_1, t_2$ and $P_1$ from \cref{figure proof N} as described in the proof.}
	\label{figure proof N 2}
\end{figure}

Now by assumption $t$ is more evenly broken than $\ell_n$ and we just proved $t'$ is more evenly broken than $t$, so $t'$ is more evenly broken than $\ell_n$, but $t'$ has a longer longest path than $t$, contradicting the choice of $t$.  Therefore no tree on $n$ vertices is more evenly broken than $\ell_n$.

With this claim proven we can now prove that no tree on $n$ vertices has fewer tubings than $\ell_n$ by induction.  The result holds for $n=1$, suppose it holds for all trees on $<n$ vertices and consider $t$ on $n$ vertices and let $(n_1, n_2, \ldots, n_{\lfloor n/2 \rfloor})$ be the vector counting how it breaks.  $t$ is no more evenly broken than $\ell_n$ so $n_1+\cdots + n_i \geq 2i$ for $1\leq i < \lfloor n/2 \rfloor$ and $n_1+\cdots + n_{\lfloor n/2 \rfloor} = |E(t)| = n-1$.  By \cref{lem recursive tubing count} we can count tubings by counting tubings of the two connected subgraphs for each way of cutting one edge of $t$.  Using the inductive bound for those subgraphs, which are themselves rooted trees, we see that the number of tubings of $t$ is at least
\[
\sum_{i=1}^{\lfloor n/2\rfloor} n_i C_{i-1}C_{n-i-1}
\]
Now we need a final claim: we claim that for $i\leq n/2$,  $C_{i-1}C_{n-i-1}\leq C_{i-2}C_{n-i}$.  Let us notice why this claim will suffice to prove the lower bound and then we will prove the claim.  If the claim holds then since $t$ is no more evenly broken than $\ell_n$ we have, using a telescoping rewriting,
\begin{align*}
|\Tub(t)| & \geq \sum_{i=1}^{\lfloor n/2\rfloor} n_i C_{i-1}C_{n-i-1} \\
& = (C_{\lfloor n/2\rfloor-1}C_{n-\lfloor n/2\rfloor -1})\sum_{i=1}^{\lfloor n/2\rfloor} n_i  + \sum_{j=2}^{\lfloor n/2 \rfloor} (C_{j-2}C_{n-j} - C_{j-1}C_{n-j-1}) \sum_{i=1}^{j-1}n_i \\
& \geq  (C_{\lfloor n/2\rfloor-1}C_{n-\lfloor n/2\rfloor -1})\sum_{i=1}^{\lfloor n/2\rfloor} n_i^{(\ell_n)} + \sum_{j=2}^{\lfloor n/2 \rfloor} (C_{j-2}C_{n-j} - C_{j-1}C_{n-j-1}) \sum_{i=1}^{j-1}n_i^{(\ell_n)} \\
& = \sum_{i=1}^{\lfloor n/2\rfloor} n_i^{(\ell_n)} C_{i-1}C_{n-i-1} \\
& = |\Tub(\ell_n)| = C_{n-1}
\end{align*}
where $n_i^{(\ell_n)} = 2$ for $i<n/2$ and for $n$ even $n_{n/2}=1$, proving the lower bound.

It remains now to prove the final claim.  This is a small algebraic exercise in binomial coefficients:
\[
    \frac{C_{i-1}C_{n-i-1}}{C_{i-2}C_{n-i}} = \frac{(2i-3)(n-i+1)}{i(2n-2i-1)}
\]
We wish to prove this rational function is $\leq 1$ for $1\leq i\leq n/2$.  The denominator is positive in this range so it suffices to show that the numerator minus the denominator is less than 0.  The numerator minus the denominator is $(2i-3)(n-i+1) - i(2n-2i-1) = 6i-3n-3$ which is less than $0$ provided $i\leq n/2 + 1/2$ and proving the claim.
\end{proof}

Note that this bound is very weak for high numbers of vertices since $C_n \sim \frac{4^n}{(n+1)\sqrt n}$ grows slower than factorially.

\begin{example}\label{ex yukawa bound}
    For the Yukawa theory in $D=4$ dimensions, the bound on the number of tubings, \cref{tubings_bound}, allows us to deduce a bound on the coefficients of the anomalous dimension itself.  This is because the Mellin transform, computed in \cref{eg yukawa mellin transform}, has the particularly simple form $c_j = (-1)^{j+1}$.
    A tubing $\tau$ of a tree on $n$ vertices consists of $2n-1$ tubes (\cref{lem recursive tubing}). The $b$-statistic (\cref{def
    b statistic}) is the number of tubes of which a given vertex is root. Consequently, the $n$
    vertices are, together, root of $2n-1$ tubes, and the Mellin monomials and factors for the root evaluate to an overall prefactor
\begin{align*}
    \prod_{v \in V(t)} c_{b(v,\tau)-1} = \prod_{v\in V(t)} (-1)^{b(v,\tau)} = (-1)^{2n-1} = -1.
\end{align*}
By \cref{thm main thm}, still in the unweighted case, the anomalous dimension    is 
\begin{align*}
    \gamma(x) = (-1) \cdot \sum_t \frac{x^{w(t)}}{|\operatorname{Aut}(t)|} \left( \prod_{v\in V(t)} (1+sw(v))_{\od v} \right) \sum_{\tau \in \Tub(t)} 1 .
\end{align*}
For a tree $t$ with $n$ vertices, the  sum on the right is bounded by \cref{tubings_bound}: $C_{n-1} \leq \sum_{\tau \in \Tub(t)}1 \leq (n-1)!$.
\end{example}

\FloatBarrier

\section{Tubing Feynman rules}
\label{sec tubing feynman rules}

In the present section, we show that tubings solve Dyson--Schwinger equations as stated in \cref{thm main thm}. We work in the algebraic setting introduced in \cref{sec algebraic set up}.

\subsection{Algebraic results and single-equation case}\label{sec single equation}

Let $D$ be a set of decorations and suppose for each $d \in D$ we are given a formal Laurent series
\begin{equation}
    F_d(\rho) = \sum_{n \ge 0} c_{n,d} \rho^{n-1}.
\end{equation}
We then define a family of Hochschild 1-cocycles $\{\Lambda^{(d)}\}_{d \in D}$ on $K[L]$ by
\begin{equation}\label{eq Lambda d}
    \Lambda^{(d)} p(L) = p(\partial_\rho) (e^{L\rho} - 1) F_d(\rho)|_{\rho=0}.
\end{equation}
(That these are indeed 1-cocycles follows from \cref{rem polynomial cocycles} or by a tedious but straightforward computation.)
By \cref{thm ck universal property dec} there is a unique map $\phi\colon \CK(D) \to K[L]$
satisfying $\phi B_+^{(d)} = \Lambda^{(d)}\phi$. We aim to give a formula for this map in terms of
tubings.

\begin{remark} \label{rem dse with cocycles}
    In the case of $D = \ZZ_{\ge 1}$, the operators we have defined are exactly those that appear in \cref{dse_differential_general}, which can be written as
    \begin{equation}
        G(x, L) = 1 + \sum_{k \ge 1} x^k \Lambda^{(k)}(G(x, L)^{1 + sk}).
    \end{equation}
    Having written it in this form, it is clear that applying the map $\phi$ to the solution of the
    \textit{combinatorial} DSE \cref{dse algebraic general} gives the solution to
    \cref{dse_differential_general}. Physically $\phi$ is a tree version of the \textit{renormalized
    Feynman rules} as discussed in \cref{sec mellin transform}.
\end{remark}

\begin{theorem} \label{phi formula dec}
    The map in question is given on trees by
    \begin{equation} \label{eq phi formula dec}
        \phi(t) = \sum_{\tau \in \Tub(t)} c(\tau) \sum_{i=1}^{b(\tau)} c_{b(\tau) - i, d(\rootv(t))}
        \frac{L^i}{i!}
    \end{equation}
    where $c(\tau)$ is the Mellin monomial (\cref{def Mellin monomial}) and $d(\rootv(t))$ is the decoration of the root of $t$.
\end{theorem}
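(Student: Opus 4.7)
The plan is induction on $|t|$, leveraging the defining property $\phi B_+^{(d)} = \Lambda^{(d)}\phi$ together with multiplicativity of $\phi$. For the base case $t = \bullet_d$, the unique tubing $\tau$ has $b(\tau) = 1$ and $c(\tau) = 1$ (empty product), so the right-hand side of \cref{eq phi formula dec} evaluates to $c_{0,d} L$; this matches $\phi(\bullet_d) = \Lambda^{(d)}(1) = c_{0,d} L$, which follows from the explicit action in \cref{rem polynomial cocycles}.

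For the inductive step, write $t = B_+^{(d)}(t_1 \cdots t_k)$ with $d = d(\rootv(t))$ and $r_i = \rootv(t_i)$. Then $\phi(t) = \Lambda^{(d)}\!\bigl(\prod_{i=1}^k \phi(t_i)\bigr)$. A useful observation, obtained by re-indexing the formula in \cref{rem polynomial cocycles}, is that
\[
\sum_{i=1}^{b(\tau)} c_{b(\tau)-i,\,d}\,\frac{L^i}{i!} \;=\; \Lambda^{(d)}\!\left(\frac{L^{b(\tau)-1}}{(b(\tau)-1)!}\right),
\]
so \cref{eq phi formula dec} can be rewritten as $\phi(t) = \Lambda^{(d)}(\tilde\phi(t))$ where $\tilde\phi(t) := \sum_\tau c(\tau) L^{b(\tau)-1}/(b(\tau)-1)!$. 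The inductive step thus reduces to the purely combinatorial identity $\tilde\phi(t) = \prod_{i=1}^k \phi(t_i)$.

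I would prove this identity bijectively. Expanding the right-hand side via the induction hypothesis yields a sum over tuples $((\tau_i, j_i))_{i=1}^k$ with $\tau_i \in \Tub(t_i)$ and $1 \le j_i \le b(\tau_i)$, weighted by $\prod_i c(\tau_i)\,c_{b(\tau_i)-j_i,\,d(r_i)}$ together with a multinomial $\binom{N}{j_1,\ldots,j_k}$ coming from $\prod_i L^{j_i}/j_i! = \binom{N}{j_1,\ldots,j_k}L^N/N!$, where $N = \sum j_i$. Matching coefficients of $L^N/N!$, the identity reduces to
\[
\sum_{\tau:\,b(\tau)=N+1} c(\tau) \;=\; \sum_{\substack{(\tau_i,\,j_i)_i\\ \sum_i j_i = N}} \binom{N}{j_1,\ldots,j_k} \prod_{i=1}^k c(\tau_i)\,c_{b(\tau_i)-j_i,\,d(r_i)}.
\]
Given $\tau$ with $b(\tau)=N+1$, its chain of root-containing tubes $V(t) = B_1 \supsetneq \cdots \supsetneq B_{N+1}=\{\rootv(t)\}$ determines $N$ peels $S_m = B_m \setminus B_{m+1}$, each contained in exactly one $V(t_{i_m})$. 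The $S_m$'s lying inside $V(t_i)$ partition $V(t_i)$ into $j_i$ subtrees (the last of which, in the order of increasing $m$, is rooted at $r_i$), and together with their sub-tubings inherited from $\tau$ these assemble into a unique tubing $\tau_i$ of $t_i$ whose outer chain peels the $j_i-1$ non-root pieces before descending into the subtube at $r_i$. The sequence $(i_1,\ldots,i_N) \in \{1,\ldots,k\}^N$ supplies the shuffle accounting for the multinomial.

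It remains to match the Mellin monomials. For $v \in V(t_i) \setminus \{r_i\}$, every tube of $\tau$ rooted at $v$ lies in the particular $S_m$ containing $v$ and coincides with a tube of $\tau_i$ rooted at $v$, so $b(v,\tau) = b(v,\tau_i)$; for $v = r_i$, the tubes of $\tau$ rooted at $r_i$ are exactly those in the innermost sub-tubing of $\tau_i$ (after the $j_i-1$ non-root peels), giving $b(r_i,\tau) = b(\tau_i) - j_i + 1$ so that the factor $c_{b(\tau_i)-j_i,\,d(r_i)}$ appears with the correct index. The main obstacle is verifying that the recipe genuinely defines a bijection — in particular that every choice of $(\tau_i, j_i)_i$ together with any shuffle really yields a valid tubing of $t$, with each intermediate peel being a connected rooted subtree attached to the current $B_m$ along an available parent edge. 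This requires careful tracking of which vertices remain exposed as ancestors within $B_m$ at each step, but follows from the observation that $B_m$ is always closed under taking ancestors in $t$.
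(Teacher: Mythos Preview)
Your proof is correct, and it takes a genuinely different route from the paper's argument.

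The paper defines $\psi$ by the right-hand side of \cref{eq phi formula dec}, extracts its linear term $\sigma$, and then works Hopf-algebraically: it first proves a formula for the convolution powers $\sigma^{*k}(t)$ via the outermost-tube recursion $(\tau',\tau'')$ of \cref{rem tau prime and tau double prime}, deduces $\psi = \exp_*(L\sigma)$ (so $\psi$ is a bialgebra morphism), proves $\sigma B_+^{(d)} = \sum_i c_{i,d}\sigma^{*i}$, and finally checks $\partial_L \psi B_+^{(d)} = \partial_L \Lambda^{(d)}\psi$ using these ingredients together with a cocycle convolution identity. The tubing combinatorics enters only through the one-step $(\tau',\tau'')$ peel.

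Your argument instead performs induction along the $B_+$ recursion and reduces everything to the single identity $\tilde\phi(t)=\prod_i\phi(t_i)$, which you prove by an explicit bijection between tubings $\tau$ of $t$ with $b(\tau)=N+1$ and tuples $((\tau_i,j_i)_i,\text{shuffle})$. This is more elementary in that it never mentions convolution, infinitesimal characters, or $\exp_*$; the observation that $B_m$ is ancestor-closed cleanly handles the validity of the inverse construction. The paper's approach, on the other hand, makes the Hopf-algebraic structure visible and yields the intermediate results ($\sigma^{*k}$ formula, $\psi=\exp_*(L\sigma)$) which are of independent use for the renormalization-group interpretation in \cref{anomalous dimension character} and for understanding the higher $L$-coefficients. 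Your bijection is essentially a repackaging of the same combinatorics that underlies the paper's Lemma on $\sigma B_+^{(d)}$, but organised around the $B_+$-decomposition rather than the single outermost peel.
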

The proof of \cref{phi formula dec} will be given at the \hyperlink{phi formula dec proof} {end of the current section}, it relies on some lemmas to be established in the following. 

For the moment, let $\psi$ denote the algebra map defined by the right side of \cref{eq phi formula
dec}. Let $\sigma$ be the linear term of $\psi$. Explicitly, for a tree $t$ we have
\begin{equation} \label{eq sigma dec}
    \sigma(t) =
    [L^1]\psi(t)=\sum_{\tau \in \Tub(t)} c_{b(\tau) - 1, d(\rootv(t))}c(\tau)
\end{equation}
and $\sigma$ vanishes on disconnected forests (including the empty forest) since $\psi$ is multiplicative by definition and has zero constant term on individual trees. By \cref{thm character exponential}, $\sigma$ is an infinitesimal character \cref{infinitesimal character}.

\begin{lemma}
    For any tree $t$ and $k \ge 1$,
    \[
        \sigma^{*k}(t) = \sum_{\genfrac{}{}{0pt}{2}{\tau \in \Tub(t)} { b(\tau) \ge k }} c_{b(\tau) - k, d(\rootv(t))}c(\tau).
    \]
\end{lemma}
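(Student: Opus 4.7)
The plan is to proceed by induction on $k$, using the recursive structure of tubings established in \cref{rem tau prime and tau double prime}. The base case $k=1$ is immediate from the defining formula \cref{eq sigma dec} for $\sigma$, together with the observation that every tubing satisfies $b(\tau) \ge 1$, because the outermost tube has $\rootv(t)$ as its root.

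For the inductive step, I will assume the formula for $k$ and compute $\sigma^{*(k+1)} = \sigma * \sigma^{*k}$ by expanding the coproduct \cref{eqn ck coproduct}:
\[
    \sigma^{*(k+1)}(t) = \sum_f \sigma(f)\, \sigma^{*k}(t \setminus f),
\]
where $f$ ranges over rooted subforests of $t$. Since $\sigma$ vanishes on disconnected forests and on the empty forest, only the $f$ that are single rooted subtrees contribute. Moreover $\sigma^{*k}(1) = 0$ for $k \ge 1$, so the case $f = t$ drops out, and the sum reduces to a sum over proper rooted subtrees $t'$, with $t'' := t \setminus t'$ a rooted tree containing the root $r$ of $t$.

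By \cref{rem tau prime and tau double prime}, for $|t| > 1$ the tubings of $t$ are in bijection with triples $(t', \tau', \tau'')$ where $t'$ is a proper rooted subtree, $\tau' \in \Tub(t')$, and $\tau'' \in \Tub(t'')$. A direct verification then gives the two crucial decompositions
\[
    b(\tau) = b(\tau'') + 1, \qquad c(\tau) = c_{b(\tau') - 1,\, d(\rootv(t'))}\, c(\tau')\, c(\tau''),
\]
the first because the outermost tube of $\tau$ adds exactly one tube rooted at $r$ beyond those in $\tau''$, and the second because the Mellin monomial $c(\tau)$ excludes only $r$, whereas $c(\tau')$ excludes $\rootv(t') \ne r$, so one must restore this missing factor. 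Substituting the inductive hypothesis for $\sigma^{*k}(t'')$ (noting $\rootv(t'') = r$) and the definition of $\sigma(t')$ into the convolution sum, then rewriting $c_{b(\tau'') - k,\, d(r)} = c_{b(\tau) - (k+1),\, d(r)}$ and observing that the condition $b(\tau'') \ge k$ becomes $b(\tau) \ge k+1$, collapses the triple sum into the claimed single sum over $\Tub(t)$.

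The edge case $|t| = 1$ must be handled separately: the only tubing has $b(\tau) = 1$ and $c(\tau) = 1$, so the right-hand side vanishes for $k \ge 2$ and equals $c_{0, d(\rootv(t))}$ for $k = 1$; on the left, $\Delta t = t \otimes 1 + 1 \otimes t$ forces $\sigma^{*k}(t) = 0$ for $k \ge 2$, matching. The main obstacle is the bookkeeping for the Mellin monomial: identifying which vertex's contribution sits in which factor, and in particular recognizing that the factor $c_{b(\tau')-1, d(\rootv(t'))}$ coming from $\sigma(t')$ is precisely the contribution of $\rootv(t')$ that $c(\tau')$ omits but $c(\tau)$ requires.
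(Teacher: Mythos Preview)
Your proof is correct and follows essentially the same approach as the paper's: induction on $k$ with base case \cref{eq sigma dec}, then expanding $\sigma^{*k+1} = \sigma * \sigma^{*k}$ via the coproduct, reducing to proper rooted subtrees using that $\sigma$ is an infinitesimal character and $\sigma^{*k}(1)=0$, and invoking the recursive structure of tubings together with the identities $b(\tau)=b(\tau'')+1$ and $c(\tau)=c_{b(\tau')-1,\,d(\rootv(t'))}\,c(\tau')\,c(\tau'')$ to collapse the sum. The only cosmetic difference is that you treat the one-vertex tree explicitly, whereas the paper absorbs it silently (the sum over proper subtrees is empty there).
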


Note that for $k > 1$, $\sigma^{*k}$ is \textit{not} an infinitesimal character and does not vanish on all disconnected forests, though it trivially must vanish on the empty forest. However, it will
be sufficient for our purposes to understand its value on trees.

\begin{proof}
    By induction on $k$. Since the $b$-statistic (\cref{def b statistic}) of every tubing $\tau$ satisfies $b(\tau) \ge 1$, the base case
    is exactly \cref{eq sigma dec}. Then by \cref{eqn ck coproduct} we have
    \[
        \sigma^{*k+1}(t) = (\sigma * \sigma^{*k})(t) = \sum_f \sigma(f) \sigma^{*k}(t \setminus
        f)
    \]
    as $f$ ranges over subforests. However, since $\sigma$ vanishes on disconnected forests only
    terms where $f$ is a tree contribute, and since $\sigma^{*k}$ vanishes on the empty forest we
    may further restrict to \textit{proper} subtrees $t'\subset t$. Thus, using the induction hypothesis we have
    \begin{align*}
        \sigma^{*k+1}(t)
        &= \sum_{t'} \sigma(t') \sigma^{*k}(t \setminus t') \\
        &= \sum_{t'} \sum_{\tau' \in \Tub(t')} \sum_{\genfrac{}{}{0pt}{2}{\tau'' \in \Tub(t \setminus t') }{ b(\tau'') \ge
        k}} c_{b(\tau') -
        1, d(\rootv(t'))} c(\tau') c_{b(\tau'') - k, d(\rootv(t))} c(\tau'').
    \end{align*}
    Now by the recursive construction of tubings (\cref{rem tau prime and tau double prime}), the
    pair $(\tau', \tau'')$ uniquely determines a tubing $\tau$ of $t$. In this tubing $\tau$,  the root  has one more tube than it has in $\tau''$, so
    \begin{equation} \label{eqn:bstat recurrence}
        b(\tau) = b(\tau'') + 1.
    \end{equation}
    The Mellin monomials (\cref{def Mellin monomial}) satisfy
    \begin{equation} \label{eqn:melmon recurrence}
        c(\tau) = c_{b(\tau') - 1, d(\rootv(t'))} c(\tau') c(\tau'')
    \end{equation}
    as we get a factor for each non-root vertex of $t'$ and $t''$ as well as for the root of $t'$ which does not contribute to $c(\tau')$.
    Hence we can rewrite the above triple sum as
    \[
        \sigma^{*k+1}(t) = \sum_{\genfrac{}{}{0pt}{2}{\tau \in \Tub(t) }{b(\tau) \ge k+1}} c_{b(\tau) - k - 1, d(\rootv(t))}
        c(\tau)
    \]
    as wanted.
\end{proof}

\begin{lemma} \label{lem convolution exponential dec}
    $\psi = \exp_*(L\sigma)$.
\end{lemma}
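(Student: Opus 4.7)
The plan is to show that both $\psi$ and $\exp_*(L\sigma)$ are algebra morphisms $\CK(D) \to K[L]$ that agree on trees; since rooted trees freely generate $\CK(D)$ as an algebra, the equality will then follow. That $\psi$ is an algebra morphism is part of its definition. For $\exp_*(L\sigma)$, note that $\sigma$ was identified as an infinitesimal character just before the preceding lemma, and convolution exponentiation of an infinitesimal character produces a character in the character group; so $\exp_*(L\sigma)$, viewed as a map $\CK(D) \to K[L]$, is indeed multiplicative.

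Next, I would extract the $L^i$-coefficient of each side, evaluated on an arbitrary tree $t$. Directly from \cref{eq phi formula dec},
\[
    [L^i]\,\psi(t) \;=\; \frac{1}{i!} \sum_{\substack{\tau \in \Tub(t) \\ b(\tau) \ge i}} c(\tau)\, c_{b(\tau)-i,\, d(\rootv(t))} \qquad (i \ge 1),
\]
and $[L^0]\psi(t) = 0$ because the inner sum in \cref{eq phi formula dec} starts at $i=1$. On the other hand, expanding $\exp_*(L\sigma) = \varepsilon + L\sigma + \tfrac{L^2}{2!}\sigma^{*2} + \cdots$ and invoking the preceding lemma gives, for $i \ge 1$,
\[
    [L^i]\,\exp_*(L\sigma)(t) \;=\; \frac{\sigma^{*i}(t)}{i!} \;=\; \frac{1}{i!} \sum_{\substack{\tau \in \Tub(t) \\ b(\tau) \ge i}} c_{b(\tau)-i,\, d(\rootv(t))}\, c(\tau),
\]
while $[L^0]\exp_*(L\sigma)(t) = \varepsilon(t) = 0$ since $t$ is a non-empty tree. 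Matching term by term, the two expressions coincide, so $\psi(t) = \exp_*(L\sigma)(t)$ for every tree $t$.

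Finally, I would observe that on the empty forest (the unit $1 \in \CK(D)$), both sides evaluate to $1 = \varepsilon(1)$, and so multiplicativity of both maps propagates the equality on generators to an equality on all of $\CK(D)$. There is no real obstacle here: all the combinatorial content was already packaged into the recursion for $\sigma^{*k}(t)$ in the preceding lemma, and this step is essentially a bookkeeping comparison of coefficients together with the character-theoretic fact that $\exp_*$ of an infinitesimal character is multiplicative.
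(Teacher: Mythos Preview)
Your proof is correct and follows essentially the same approach as the paper's: reduce to trees by multiplicativity, then compare the coefficient of $L^i$ on both sides using the preceding lemma's formula for $\sigma^{*i}(t)$. Your version is slightly more explicit about the $L^0$ term and about why $\exp_*(L\sigma)$ is multiplicative, but the argument is the same.
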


By \cref{thm character exponential}, this is equivalent to $\psi$ being a Hopf algebra morphism.

\begin{proof}
    Both sides are algebra morphisms, so we only need to show they agree on trees. We compute
    \begin{align*}
        \psi(t)
        &= \sum_{\tau \in \Tub(t)} c(\tau) \sum_{k=1}^{b(\tau)} c_{b(\tau)-k, d(\rootv(t))}
        \frac{L^k}{k!} \\
        &= \sum_{k \ge 1} \frac{L^k}{k!} \sum_{\genfrac{}{}{0pt}{2}{\tau \in \Tub(t) }{ b(\tau) \ge k}} c_{b(\tau) - k,
        d(\rootv(t))} c(\tau) \\
        &= \sum_{k \ge 1} \frac{L^k}{k!} \sigma^{*k}(t) \\
        &= \exp_*(L\sigma)(t). \qedhere
    \end{align*}
\end{proof}

The following elementary fact about 1-cocycles will be useful to us.

\begin{lemma} 
\label{lem cocycle convolution}
    Let $H$ be a bialgebra, $\Lambda$ be a Hochschild 1-cocycle on $H$, and $a, b\colon H \to A$ for some
    commutative algebra $A$. Then
    \[
        (a * b)\Lambda = b(1) a \Lambda + a * b\Lambda.
    \]
\end{lemma}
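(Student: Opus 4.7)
The plan is to evaluate both sides on an arbitrary element $x \in H$ and reduce the identity to a direct application of the cocycle relation. Specifically, I would start by unfolding the left-hand side using the definition \cref{convolution product} of convolution,
\[
    (a * b)\Lambda(x) = (a \otimes b) \Delta \Lambda(x),
\]
and then substitute the Hochschild 1-cocycle relation
\[
    \Delta \Lambda = \Lambda \otimes 1 + (\id \otimes \Lambda)\Delta
\]
to split this into two pieces.

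The first piece, $(a \otimes b)(\Lambda(x) \otimes 1)$, is simply $a(\Lambda(x)) \cdot b(1) = b(1)\,(a\Lambda)(x)$, which accounts for the first term on the right-hand side. For the second piece, using Sweedler notation $\Delta x = \sum x_{(1)} \otimes x_{(2)}$, one gets
\[
    (a \otimes b)(\id \otimes \Lambda)\Delta(x) = \sum a(x_{(1)})\, b(\Lambda(x_{(2)})) = (a * b\Lambda)(x),
\]
where in the last step we recognize $b \Lambda$ as a map $H \to A$ and apply the definition of convolution once more. Adding the two pieces gives the claimed identity.

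There is no real obstacle here: the statement is essentially a formal rewriting, and the only thing to be careful about is keeping the order of tensor factors straight and remembering that commutativity of $A$ is not even needed for this particular identity, but is implicitly used so that $a * b$ is again an algebra-valued map (as will be relevant in subsequent applications). If one wished to avoid Sweedler notation entirely, the same computation could be written coordinate-free by composing with the multiplication $\mu_A$ of $A$, but this adds no content.
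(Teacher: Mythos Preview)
Your proof is correct and follows essentially the same approach as the paper: both unfold the convolution, apply the cocycle relation $\Delta\Lambda = \Lambda\otimes 1 + (\id\otimes\Lambda)\Delta$, and identify the two resulting summands. The paper's proof is written coordinate-free using the multiplication map $m_A$ (exactly the variant you mention at the end), while you evaluate on an element and use Sweedler notation; these are purely cosmetic differences.
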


\begin{proof}
    A simple computation:
    \begin{align*}
        (a * b)\Lambda
        &= m_A (a \otimes b) \Delta_H \Lambda \\
        &= m_A (a \otimes b) (\Lambda \otimes 1 + (\mathrm{id} \otimes \Lambda)\Delta_H) \\
        &= m_A (a\Lambda \otimes b(1) + (a \otimes b\Lambda)\Delta_H) \\
        &= b(1) a \Lambda + a * b\Lambda. \qedhere
    \end{align*}
\end{proof}

In particular, if $b(1) = 0$ (such as when $b$ is an infinitesimal character), \cref{lem cocycle convolution} simply becomes
$(a * b)\Lambda = a * b\Lambda$.

\begin{lemma} \label{lem sigma bplus dec}
    For each $d \in D$,
    \[
        \sigma B_+^{(d)} = \sum_{i \ge 0} c_{i,d} \sigma^{*i}.
    \]
\end{lemma}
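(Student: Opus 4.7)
The plan is to evaluate both sides on a forest $f = t_1 \cdots t_n$ by a combinatorial reorganization. The base case $f = 1$ gives $\sigma(B_+^{(d)}(1)) = \sigma(\bullet_d) = c_{0, d}$ directly from the definition, matching $\sum_i c_{i, d} \sigma^{*i}(1) = c_{0, d}$, so the work is at $n \geq 1$. Let $t = B_+^{(d)}(f)$ and consider any tubing $\tau$ of $t$ with $m = b(\tau)$. The tubes of $\tau$ containing the root $r$ form a descending chain $V(t) = b_1 \supsetneq b_2 \supsetneq \cdots \supsetneq b_m = \{r\}$, and the complements $a_i := b_i \setminus b_{i+1}$ for $i < m$ are themselves tubes, each carrying a sub-tubing $\tau_i \in \Tub(a_i)$. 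The Mellin monomial factorizes as $c(\tau) = \prod_{i=1}^{m-1} c_{b(\tau_i) - 1, d(\rootv(a_i))} c(\tau_i)$. Summing over the sub-tubings converts each piece into $\sigma(a_i)$, giving
\[
    \sigma(B_+^{(d)}(f)) = \sum_{m \geq 1} c_{m-1, d} \sum_{(a_i)_{i=1}^{m-1}} \prod_{i=1}^{m-1} \sigma(a_i),
\]
where the inner sum ranges over valid cut sequences of $t$, namely those that arise as the root-chain cuts of some tubing.

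Next I would reorganize the cut sequences by which subtree $t_j$ each cut lies in. Since the only edges of $t$ at $r$ go to the vertices $\rootv(t_j)$, each $a_i$ is contained in a single $t_j$, and the validity of a global cut sequence decouples into validity within each $t_j$ together with an arbitrary interleaving. A length-$k$ valid cut sequence of $t$ thus corresponds to a choice of $(i_1, \ldots, i_n)$ with $\sum_j i_j = k$ and $i_j \geq 1$, a length-$i_j$ valid cut sequence $(A_1^{(j)}, \ldots, A_{i_j}^{(j)})$ of each $t_j$ (pieces partitioning $V(t_j)$, with $A_{i_j}^{(j)} \ni \rootv(t_j)$ and a valid removal order), and a $\binom{k}{i_1, \ldots, i_n}$-way interleaving of the $n$ streams. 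This reduces the identity to the single-tree statement
\[
    \sigma^{*i}(t_j) = \sum_{(A_\ell)_{\ell=1}^{i}} \prod_{\ell=1}^{i} \sigma(A_\ell).
\]

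The crux is this single-tree identity, which I would establish by a bijection between pairs (length-$i$ valid cut sequence of $t_j$ together with sub-tubings $\tau_{A_\ell}$ on each piece) and tubings $\tau_j \in \Tub(t_j)$ with $b(\tau_j) \geq i$. The reconstruction concatenates: take the outer chain $V(t_j) \supsetneq V(t_j) \setminus A_1 \supsetneq \cdots \supsetneq A_i$, then extend by the root chain of $\tau_{A_i}$, obtaining $b(\tau_j) = i + b(\tau_{A_i}) - 1$. The weight bookkeeping matches term by term: $\prod_\ell c_{b(\tau_{A_\ell}) - 1, d(\rootv(A_\ell))} c(\tau_{A_\ell})$ equals $c_{b(\tau_j) - i, d(\rootv(t_j))} c(\tau_j)$, where the prefactor arises by absorbing the $\ell = i$ factor via $b(\tau_{A_i}) - 1 = b(\tau_j) - i$ and recognizing $\prod_{\ell < i} c_{b(\tau_{A_\ell})-1, d(\rootv(A_\ell))} c(\tau_{A_\ell}) \cdot c(\tau_{A_i}) = c(\tau_j)$. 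Combining all pieces with the multinomial identity $\sigma^{*k}(f) = \sum_{i_1 + \cdots + i_n = k, i_j \geq 1} \binom{k}{i_1, \ldots, i_n} \prod_j \sigma^{*i_j}(t_j)$, which follows from $\psi = \exp_*(L\sigma)$ (\cref{lem convolution exponential dec}) applied multiplicatively to $f$, yields $\sigma(B_+^{(d)}(f)) = \sum_{k \geq 0} c_{k, d} \sigma^{*k}(f)$. The main obstacle is the weight-matching step of the bijection, where one must carefully track how the Mellin-monomial factor at the root of each cut piece is absorbed either into $c(\tau_j)$ (for $\ell < i$) or into the prefactor $c_{b(\tau_j) - i, d}$ (for $\ell = i$).
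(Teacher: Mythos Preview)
Your argument is correct and proves the lemma by a genuinely different route than the paper.

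The paper introduces auxiliary infinitesimal characters $\sigma_{i,d}$ (recording only tubings with $b(\tau)=i$ and root decoration $d$), observes that $\sigma = \sum_{i,d} c_{i-1,d}\,\sigma_{i,d}$, and then proves $\sigma_{i,d} B_+^{(d)} = \sigma^{*(i-1)}$ by induction on $i$, the inductive step relying on the Hochschild cocycle identity for $B_+^{(d)}$ via \cref{lem cocycle convolution}. In contrast, you bypass the cocycle property entirely: you expand $\sigma(B_+^{(d)} f)$ directly through the root-chain of a tubing, decouple the resulting cut sequence across the subtrees $t_1,\ldots,t_n$ with a multinomial interleaving, and match against $\sigma^{*k}(f)$ using the product expansion furnished by \cref{lem convolution exponential dec}. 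Your single-tree identity $\sigma^{*i}(t_j)=\sum_{(A_\ell)}\prod_\ell \sigma(A_\ell)$ is equivalent to the paper's earlier formula for $\sigma^{*k}$ on trees, but you reprove it by an explicit bijection (cut sequence plus sub-tubings $\leftrightarrow$ tubing with $b\ge i$) rather than the paper's induction on $k$. The paper's proof is shorter and makes the role of the cocycle structure transparent; yours is more hands-on combinatorially and shows concretely how the tubing data reorganizes across a $B_+$ operation, at the cost of tracking the multinomial bookkeeping and the weight-matching for the root piece $A_i$ that you rightly flag as the delicate point.
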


\begin{proof}
    For $i \ge 1$ and $d \in D$ define an infinitesimal character $\sigma_{i, d}$ by
    \[
        \sigma_{i, d}(t) = \sum_{\genfrac{}{}{0pt}{2}{\tau \in \Tub(t) }{ b(\tau) = i}} c(\tau)
    \]
    when $t$ is a tree with root decorated by $d$ and otherwise zero. Clearly, we then have
    \[
        \sigma = \sum_{i \ge 1} \sum_{d \in D} c_{i-1, d} \sigma_{i, d}.
    \]
    By construction we have $\sigma_{i, d'} B_+^{(d)} = 0$ for $d \ne d'$. Thus the result follows
    if we can show that $\sigma_{i, d} B_+^{(d)} = \sigma^{*i-1}$. We do this by induction on $i$.
    For the base case, note that the only tubing $\tau$ satisfying $b(\tau) = 1$ is the unique
    tubing of the one-vertex tree, which also has $c(\tau) = 1$. Thus $\sigma_{1,d}$ sends the
    one-vertex tree with decoration $d$ to 1 and all other trees to 0, so $\sigma_{1,d} B_+^{(d)}$
    sends the empty forest to 1 and all other forests to 0, i.e. $\sigma_{1,d}B_+^{(d)} =
    \varepsilon = \sigma^{*0}$.

    Now consider $\sigma_{i+1, d}$. This vanishes on one-vertex trees. For $t$ a tree with more than
    one vertex and root decorated by $d$ we have
    \begin{align*}
        \sigma_{i+1, d}(t)
        &= \sum_{\genfrac{}{}{0pt}{2}{\tau \in \Tub(t) }{ b(\tau) = i+1}} c(\tau) \\
        &= \sum_{t'} \left(\sum_{\tau' \in \Tub(t')} c_{b(\tau') - 1,d} c(\tau')\right)
        \left(\sum_{\genfrac{}{}{0pt}{2}{\tau'' \in \Tub(t \setminus t') }{ b(\tau'') = i}} c(\tau'')\right) & \text{by \cref{eqn:bstat recurrence} and \cref{eqn:melmon recurrence}}\\
        &= \sum_{t'} \sigma(t') \sigma_{i, d}(t \setminus t') \\
        &= (\sigma * \sigma_{i, d})(t).
    \end{align*}
    (where as usual $t'$ ranges over rooted subtrees of $t$ in both relevant sum) and thus while $\sigma_{i+1, d} \ne \sigma * \sigma_{i, d}$ the two do agree on the image of
    $B_+^{(d)}$.

    Thus
    \begin{align*}
        \sigma_{i+1, d} B_+^{(d)}
        &= (\sigma * \sigma_{i, d}) B_+^{(d)} \\
        &= \sigma * \sigma_{i, d} B_+^{(d)} && \text{by \cref{lem cocycle convolution}} \\
        &= \sigma * \sigma^{*i-1} && \text{by the induction hypothesis} \\
        &= \sigma^{*i}
    \end{align*}
    as wanted.
\end{proof}

Finally, we are ready to prove our formula.

\begin{proof}[Proof of \cref{phi formula dec}] \hypertarget{phi formula dec proof}
    To show $\phi = \psi$, by uniqueness we need only show that $\psi$ satisfies the required
    formula $\psi B_+^{(d)} = \Lambda^{(d)} \psi$. Note that both of these have zero constant term, so it is sufficient to show that they agree after differentiating with respect to $L$. By
    \cref{rem polynomial cocycles}, we can rewrite $\Lambda^{(d)}$ in the integral form
    \[
        \Lambda^{(d)} f(L) = \int_0^L \mathrm{d}u\, \sum_{n \ge 0} c_{i, d} \partial_u^i f(u)
    \]
    and thus
    \[
        \partial_L \Lambda^{(d)} = \sum_{i \ge 0} c_{i, d} \partial_L^i.
    \]
    Now using our lemmas we can compute
    \begin{align*}
        \partial_L \psi B_+^{(d)}
        &= (\psi * \sigma) B_+^{(d)} && \text{by \cref{lem convolution exponential dec}} \\
        &= \psi * \sigma B_+^{(d)} && \text{by \cref{lem cocycle convolution}} \\
        &= \sum_{i \ge 0} c_{i, d} \psi * \sigma^{*i} && \text{by \cref{lem sigma bplus dec}} \\
        &= \sum_{i \ge 0} c_{i, d} \partial_L^i \psi && \text{by \cref{lem convolution exponential
        dec}} \\
        &= \partial_L \Lambda^{(d)} \psi.
    \end{align*}
    The result follows.
\end{proof}

We are now able to prove our main result for the single-equation case.

\begin{proof}[Proof of \cref{thm main thm}]
    As suggested in \cref{rem dse with cocycles}, we apply $\phi$ to the series from \cref{thm
    multi primitive comb dse} that solves the combinatorial DSE. The result follows immediately.
\end{proof}

\subsection{Systems}\label{sec systems}

\Cref{phi formula dec} can also be applied to \textit{systems} of Dyson--Schwinger equations \cite{foissy_classification_2010,kissler_systems_2019}. Let
$A$ be a finite set which indexes the equations in our system. Our set of decorations will be $A
\times \ZZ_{\ge 1}$. Given parameters $\mathbf s = (s_a)_{a \in A}$ we have the combinatorial system
\begin{equation} \label{eq combinatorial system}
    \begin{aligned}
        T_a(x) &= 1 + \sum_{k \ge 1} x^k B_+^{(a, k)}\big(T_a(x)Q(x)^k \big) \\
        Q(x) &= \prod_{a \in A} T_a(x)^{s_a}
    \end{aligned}
\end{equation}
in the algebra $\CK(A \times \ZZ_{\ge 1})$. Our first goal will be to prove an analogue of \cref{thm multi primitive comb dse} for systems.

For a vertex $v$ of an $(A \times \ZZ_{\ge
1})$-decorated tree $t$, write $\alpha(v)$ for the $A$ component of its decoration (which we will
call the \textit{type}) and $w(v)$ for the $\ZZ_{\ge 1}$ (which we continue to call the
\textit{weight}). Write $\od[a]v$ for the number of children of type $a$. Then define
\begin{equation}
    \xi(v, \mathbf s) = (1 + s_{\alpha(v)} w(v))_{\od[\alpha(v)]{v}}
    \prod_{a \ne \alpha(v)} (s_a w(v))_{\od[a]{v}}.
\end{equation}
Note that in the case of a single equation (only one type) this simplifies to $\xi(v, s) = (1 +
sw(v))_{\od v}$, the same factor contributed by the vertex to the solution given in \cref{thm multi primitive comb dse}.

\begin{theorem} \label{thm cdse system sols}
    The solution to the combinatorial system \cref{eq combinatorial system} is given by
    \begin{equation} \label{eq cdse system formula}
        T_a(x) = 1 + \sum_{\alpha(\rootv(t)) = a} \left(\prod_{v \in V(t)} \xi(v, \mathbf s)
        \right) \frac{t x^{w(t)}}{|\operatorname{Aut}(t)|}.
    \end{equation}
\end{theorem}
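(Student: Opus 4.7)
The plan is to check by direct substitution that the proposed formula satisfies the combinatorial system \cref{eq combinatorial system} and then invoke uniqueness. Uniqueness follows from the standard triangularity argument: the coefficient of $x^n$ on the right-hand side of each equation depends only on lower-degree coefficients of the $T_a$, so fixed-point iteration converges in $\CK(A \times \ZZ_{\geq 1})[[x]]$. Let $\tilde T_a(x)$ denote the proposed right-hand side of \cref{eq cdse system formula} and $\tilde Q(x) = \prod_a \tilde T_a(x)^{s_a}$, with real-exponent powers interpreted via the generalized binomial expansion $(1 + X)^s = \sum_m \binom{s}{m} X^m$ applied to $X = \tilde T_b - 1$, which is of positive $x$-degree.

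The first step is to expand
\[
    \tilde T_a(x)\tilde Q(x)^k = \tilde T_a(x)^{1 + s_a k}\prod_{b \ne a}\tilde T_b(x)^{s_b k}.
\]
For each factor, the coefficient of a forest $\prod_i t_{b,i}^{j_{b,i}}$ (distinct trees $t_{b,i}$ with root type $b$) in $\tilde T_b(x)^{e}$ is
\[
    \frac{(e)_{N_b}}{\prod_i j_{b,i}!}\prod_i \beta_{b, t_{b,i}}^{j_{b,i}}, \qquad N_b := \sum_i j_{b,i},
\]
where $\beta_{b,t}$ denotes the coefficient of $t$ in $\tilde T_b(x)$; this combines the generalized binomial factor $\binom{e}{N_b}$ with the multinomial count $N_b!/\prod_i j_{b,i}!$ of orderings. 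Applying $x^k B_+^{(a,k)}$ then builds a tree $t$ whose root $r$ has type $a$, weight $k$, and exactly $N_b$ children of type $b$, with the $t_{b,i}$ appearing as the isomorphism classes of branch subtrees with multiplicities $j_{b,i}$.

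Next, I would match the coefficient of $t$ on both sides. The product of falling factorials at the root collapses to
\[
    (1 + s_a k)_{N_a}\prod_{b \ne a}(s_b k)_{N_b} = \xi(r, \mathbf s),
\]
and substituting $\beta_{b, t_{b,i}} = \bigl(\prod_{v \in V(t_{b,i})}\xi(v, \mathbf s)\bigr)x^{w(t_{b,i})}/|\Aut(t_{b,i})|$ yields the total contribution
\[
    \left(\prod_{v \in V(t)}\xi(v, \mathbf s)\right)\frac{x^{w(t)}}{\prod_{b,i} j_{b,i}! \, |\Aut(t_{b,i})|^{j_{b,i}}}.
\]
This agrees with the claimed coefficient $\bigl(\prod_v \xi(v,\mathbf s)\bigr)x^{w(t)}/|\Aut(t)|$ precisely because of the classical tree-automorphism identity $|\Aut(t)| = \prod_{b,i} j_{b,i}! \, |\Aut(t_{b,i})|^{j_{b,i}}$, which holds since any root-fixing automorphism freely permutes isomorphic branch subtrees and then acts independently within each copy.

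The main obstacle is mostly bookkeeping rather than conceptual: controlling real-exponent powers when several $s_b$ are non-integer and confirming that multiplying three or more such series distributes correctly over the tree basis. This is handled by working in the graded completion of $\CK(A \times \ZZ_{\geq 1})[[x]]$, where $\tilde T_b^{s_b} = \exp(s_b \log \tilde T_b)$ is well-defined and commutes with coefficient extraction, after which the argument reduces to the same pattern as the single-equation case \cref{thm multi primitive comb dse}.
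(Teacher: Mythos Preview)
Your proposal is correct and follows essentially the same approach as the paper's proof: both expand $\tilde T_b^{e}$ via the generalized binomial series, use the wreath-product identity $|\Aut(t)| = \prod_{b,i} j_{b,i}!\,|\Aut(t_{b,i})|^{j_{b,i}}$ to convert forest coefficients into tree coefficients, and recognize the resulting product of falling factorials at the root as $\xi(r,\mathbf s)$. The only cosmetic difference is that the paper organizes the computation by first deriving closed sums for $T_a^+(x)^r/r!$ and $\prod_a T_a(x)^{p_a}$ over forests before applying $B_+^{(a,k)}$, whereas you extract a single tree coefficient directly; your explicit triangularity remark for uniqueness is a small addition the paper leaves implicit.
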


\begin{proof}
    For each $a \in A$ let $\mathcal T_a$ denote the set of $(A \times \ZZ_{\ge 1})$-decorated trees
    $t$ such that $\alpha(\rootv(t)) = a$, and let $\mathcal F_a$ denote the set of forests such
    that all components lie in $\mathcal T_a$. For a forest $f$, write $\kappa(f)$ for the
    number of components and
    \[
        \xi(f, \mathbf s) = \prod_{v \in V(f)} \xi(v, \mathbf s).
    \]

    Let $T_a(x)$ be given by \cref{eq cdse system formula} and let $T_a^+(x) = T_a(x) - 1$. In terms
    of the notation just defined we have
    \[
        T_a^+(x) = \sum_{t \in \mathcal T_a} \xi(t, \mathbf s) \frac{t x^{w(t)}}{|\Aut(t)|}.
    \]
    Now a forest $f$ with $r$ components can be represented as a product $t_1 \cdots t_r$ of trees.
    These trees can be permuted arbitrarily, so the total number of distinct ways to represent it
    this way is $r!$ divided by the number of permutations that rearrange isomorphic trees. This
    quantity is equivalently the number of orbits of $\Aut(f)$ on the set of orderings of the components. By elementary
    group theory (or the compositional formula for exponential generating functions) we get
    \[
        \frac{T_a^+(x)^r}{r!} = \sum_{\genfrac{}{}{0pt}{2}{f \in \mathcal F_a }{ ~ \kappa(f) = r}} \xi(f, \mathbf s)
        \frac{fx^{w(f)}}{|\Aut(f)|}.
    \]
    (Here we are using the fact that $\xi$ is multiplicative and $w$ is additive over connected components.) It follows that for any scalar $p$,
    \begin{align*}
        T_a(x)^p
        &= \sum_{r \ge 0} \binom{p}{r} T_a^+(x)^r \\
        &= \sum_{r \ge 0} (p)_r \sum_{\genfrac{}{}{0pt}{2}{f \in \mathcal F_a }{  ~ \kappa(f) = r}} \xi(f, \mathbf s)
        \frac{fx^{w(f)}}{|\Aut(f)|} \\
        &= \sum_{f \in \mathcal F_a} (p)_{\kappa(f)} \xi(f, \mathbf s) \frac{f x^{w(f)}}{|\Aut(f)|}.
    \end{align*}

    For a general $(A \times \ZZ_{\ge 1})$-decorated forest, write $\kappa_a(f)$ for the number of
    components that are in $\mathcal T_a$. Then for any exponents $(p_a)_{a \in A}$ we can take the
    product to get
    \[
        \prod_{a \in A} T_a(x)^{p_a} = \sum_f \left(\prod_{a \in A} (p_a)_{\kappa_a(f)} \right)
        \xi(f, \mathbf s) \frac{f x^{w(f)}}{|\Aut(f)|}.
    \]
    Here the appearance of $|\Aut(f)|$ can be explained by the same group-theoretic reasoning as before, but it also follows from the observation that since automorphisms must preserve the decoration of the root, $\Aut(f)$ will factor as a direct product over $a \in A$ of the autormorphism groups of subforests in $\mathcal F_a$.

    Now any $t \in \mathcal T_a$ can be uniquely written as $t = B_+^{(a, k)}f$ for some forest $f$
    and some $k \in \ZZ_{\ge 1}$. Moreover we have $w(t) = w(f) + k$, $\Aut(t) \cong \Aut(f)$, and
    for any type $a'$ we have
    $\od[a']{\rootv(t)} = \kappa_{a'}(f)$. Hence
    \[
        \xi(t, \mathbf s) = (1 + s_ak)_{\kappa_a(f)} \left(\prod_{a' \ne a}
        (s_{a'}k)_{\kappa_{a'}(f)}\right) \xi(f, \mathbf s).
    \]
    Putting all of this together,
    \begin{align*}
        T_a(x)
        &= 1 + \sum_{t \in \mathcal T_a} \xi(t, \mathbf s) \frac{t x^{w(t)}}{|\Aut(t)|} \\
        &= 1 + \sum_{k \ge 1} \sum_f (1 + s_ak)_{\kappa_a(f)} \left(\prod_{a' \ne a}
        (s_{a'}k)_{\kappa_{a'}(f)}\right) \xi(f, \mathbf s) \frac{(B_+^{(a, k)}f) x^{w(f) +
        k}}{|\Aut(f)|} \\
        &= 1 + \sum_{k \ge 1} x^k B_+^{(a, k)} \left(T_a(x)^{1 + s_ak} \prod_{a' \ne a}
        T_{a'}(x)^{s_{a'}k}\right) \\
        &= 1 + \sum_{k \ge 1} x^k B_+^{(a, k)}(T_a(x) Q(x)^k)
    \end{align*}
    and hence these series solve \cref{eq combinatorial system} as desired.
\end{proof}

The analytic system corresponding to \cref{eq combinatorial system} is
\begin{equation} \label{eq analytic system}
    \begin{aligned}
        G_a(x, L) &= 1 + \sum_{k \ge 1} x^k G(x, \partial_\rho) Q(x, \partial_\rho)^k (e^{L\rho} -
        1)F_{a,k}(\rho)\bigg|_{\rho = 0} \\
        Q(x, L) &= \prod_{a \in A} G_a(x, L)^{s_a}
    \end{aligned}
\end{equation}
where
\begin{equation}
    F_{a, k}(\rho) = \sum_{n \ge 0} c_{n,a,k}\rho^{n-1}.
\end{equation}
(Note that for aesthetic reasons we write $c_{n,a,k}$ with three separate subscripts where to perfectly match the notation of the previous subsection we would have $c_{n,(a,k)}$.)

Applying \cref{phi formula dec} to the combinatorial solution from \cref{thm cdse system sols} immediately gives the following.

\begin{theorem}
    The solution to the analytic system \cref{eq analytic system} is given by
    \begin{equation}
        G_a(x, L) = 1 + \sum_{\alpha(\rootv(t)) = a} \left(\prod_{v \in V(t)} \xi(v, \mathbf
        s)\right) \left(\sum_{\tau \in \Tub(t)} c(\tau) \sum_{i=1}^{b(\tau)} c_{b(\tau) - i,
        a, w(\rootv(t))} \frac{L^i}{i!} \right)
        \frac{x^{w(t)}}{|\operatorname{Aut}(t)|}.
    \end{equation}
\end{theorem}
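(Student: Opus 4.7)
The plan is to obtain the analytic system solution as the image under the map $\phi$ (from \cref{phi formula dec}, applied with decoration set $D = A \times \ZZ_{\ge 1}$ and 1-cocycles $\Lambda^{(a,k)}$ as in \cref{eq Lambda d}) of the combinatorial system solution given by \cref{thm cdse system sols}. This is in direct analogy with the proof of the single-equation case in \cref{sec single equation}, so the argument is essentially a matter of assembling earlier results.

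First I would verify that applying $\phi$ to the combinatorial system \cref{eq combinatorial system} yields the analytic system \cref{eq analytic system}. Since $\phi$ is a multiplicative algebra morphism, applying it to $Q(x) = \prod_a T_a(x)^{s_a}$ gives $\phi(Q(x)) = \prod_a \phi(T_a(x))^{s_a} = \prod_a G_a(x,L)^{s_a}$, which matches the definition of $Q(x,L)$ in the analytic system. Applying $\phi$ to $T_a(x) = 1 + \sum_{k\ge 1} x^k B_+^{(a,k)}(T_a(x)Q(x)^k)$ and using $\phi B_+^{(a,k)} = \Lambda^{(a,k)}\phi$, together with the explicit form of $\Lambda^{(a,k)}$ from \cref{eq Lambda d} (with the Mellin series $F_{a,k}(\rho)$), reproduces exactly \cref{eq analytic system}.

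Next, I would apply $\phi$ directly to the explicit combinatorial solution from \cref{thm cdse system sols}, namely
\[
T_a(x) = 1 + \sum_{\alpha(\rootv(t)) = a} \left(\prod_{v\in V(t)} \xi(v,\mathbf s)\right)\frac{t\,x^{w(t)}}{|\Aut(t)|}.
\]
The coefficients $\xi(v,\mathbf s)/|\Aut(t)|$ and the monomials $x^{w(t)}$ are scalars in $K[[x]]$, so by linearity the only nontrivial action of $\phi$ is on the tree $t$ itself. Invoking \cref{phi formula dec} with $d(\rootv(t)) = (a, w(\rootv(t)))$ substitutes $t$ by the tubing sum
\[
\phi(t) = \sum_{\tau\in\Tub(t)} c(\tau)\sum_{i=1}^{b(\tau)} c_{b(\tau)-i,\,a,\,w(\rootv(t))}\frac{L^i}{i!},
\]
which is precisely the inner factor in the claimed formula.

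The only place where any subtlety arises is the compatibility between the decorated tubing formula and the two-parameter indexing of the Mellin coefficients $c_{n,a,k}$. Since the decoration is the pair $(a,k)$ with $a = \alpha(\rootv(t))$ and $k = w(\rootv(t))$, the Mellin monomial $c(\tau)$ from \cref{def Mellin monomial} naturally uses $c_{b(v,\tau)-1,\,\alpha(v),\,w(v)}$ for each non-root vertex, exactly matching the decoration structure inherited from \cref{thm cdse system sols}. With this identification, combining the two substitutions gives the stated expression for $G_a(x,L)$. No further obstacles arise, as the heavy lifting — proving \cref{phi formula dec} and solving the combinatorial system — has already been done; the main theorem of this subsection is essentially their composition.
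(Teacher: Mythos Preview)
Your proposal is correct and matches the paper's own proof, which simply states that the result follows immediately by applying \cref{phi formula dec} to the combinatorial solution from \cref{thm cdse system sols}. Your version spells out in more detail why $\phi$ intertwines the combinatorial and analytic systems and how the decoration $(a,k)$ feeds into the Mellin monomial, but the underlying argument is identical.
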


\section{Chord diagrams}
\label{sec chord}

One of us with other coauthors has used expansions indexed by rooted connected chord diagrams to solve the Dyson--Schwinger equation \cref{dse_differential_general} first in the case $k=1$ and $s=-2$ \cite{marie_chord_2013}, then in the case where $s$ is any negative integer and no restrictions on $k$ \cite{hihn_generalized_2019}.  Physical and combinatorial properties and consequences of these expansions were further studied in \cite{courtiel_terminal_2017, courtiel_connected_2019, courtiel_nexttok_2020}.  

Chord diagrams have long been studied across mathematics and the sciences \cite{touchard_probleme_1952}, including Vassiliev invariants in knot theory \cite{zagier_vassiliev_2001, bollobas_linearized_2000}, graph sampling \cite{acan_enumerativeprobabilistic_2013}, analysis of computer structures \cite{flajolet_sequence_1980}, and bioinformatics \cite{andersen_topological_2013, hofacker_combinatorics_1998, bafna_genome_1996}.  From an enumerative perspective rooted chord diagrams have been studied in their own right see \cite{stein_class_1978, nijenhuis_enumeration_1979, nabergall_combinatorics_2023, nabergall_enumerative_2022} and many others, and are also as a special case of what are known as arc diagrams \cite{burrill_generating_2014, pilaud_hopf_2019}.

These chord diagram expansions were nice in that they were the first way to give combinatorial solutions to the Dyson--Schwinger equation where each indexing combinatorial object contributed simple monomials similar to the Mellin monomials of \cref{def Mellin monomial} and a small polynomial in $L$.  This combinatorial control on the solution to the Dyson--Schwinger equation was particularly suitable to studying the leading log, next to leading log, etc. contributions \cite{courtiel_terminal_2017, courtiel_nexttok_2020}.  The chord diagram expansion also singled out certain parameters on rooted connected chord diagrams, the \emph{terminal chords}, see below, which had not had substantial study in the past\footnote{An equivalent concept has appeared in the combinatorics of associated Hermite polynomials \cite{drake_combinatorics_2009} and certain infinite limits of Gaussian $\beta$-ensembles \cite{benaych-georges_matrix_2022}.} but are interesting in their distribution \cite{courtiel_terminal_2017}, and correspond to reasonable parameters on other combinatorial objects such as vertices in bridgeless combinatorial maps \cite{courtiel_connected_2019}, and lead to interesting enumerative questions and results \cite{nabergall_combinatorics_2023,nabergall_enumerative_2022}.  These expansions rejuvenated the pure combinatorial study of chord diagrams.

However, the chord diagram expansion solutions to Dyson--Schwinger equations also had a significant downside: namely they were proved by grinding certain matching recurrences so they did not give direct insight into questions such as which chord diagrams are contributed by which Feynman graph, how to generalize to other values of $s$ or to systems or beyond, or why really it was chord diagrams that appeared.  The tubing expansions of the present paper resolve these downsides while maintaining the upsides of combinatorially indexed solutions to Dyson--Schwinger equations with each object having a simple contribution to the sum, and being enumeratively interesting combinatorial objects.  To see these benefits clearly, we will lay out the definitions for the chord diagram expansions, and prove that the tubing expansions and chord diagram expansions agree on their common domain.

\subsection{Chord diagram set up} \label{sec chord diagram set up}

\begin{definition}\label{rooted chord diagram}\label{root chord}
A \emph{rooted chord diagram} of \emph{size} $n$ is a set of ordered pairs of the form $(a_i,b_i)$, called \emph{chords}, such that $\{a_1,b_1, a_2, b_2, \dots, a_n,b_n\} = \{1,2, \dots, 2n\}$ and $a_i < b_i$ for each $1 \leq i \leq n$. 
The chord with $a_1=1$ is the \emph{root chord}.
\end{definition}

If we consider the set $\{1,2, \dots, 2n\}$ up to cyclic permutation, then the equivalence classes are unrooted chord diagrams.
In the graphical sense, an unrooted chord diagram is a 1-regular graph on a cyclically ordered vertex set. Here, chords are the edges of the graph. A rooted chord diagram is a 1-regular graph on a linearly ordered vertex set and the ordered pair $(a_i,b_i)$ is a chord where $a_i$ is its first vertex, or \emph{source}, and $b_i$ is its second vertex, or \emph{sink}. The root chord is the first chord. We will only consider rooted chord diagrams, so from now on, \emph{chord diagram} indicates rooted chord diagrams.

 \begin{definition}\label{decomposable}\label{indecomposable}
 A chord diagram is \emph{decomposable} if its vertices can be partitioned into two sets, $S,T$, such that $s < t$ for all $s \in S$ and $t \in T$, and that there are no chords $(a_i,b_i)$ with $a_i \in S$ and $b_i \in T$.  A chord diagram is \emph{indecomposable} if it is not decomposable. 
 \end{definition}

Intuitively, a chord diagram is decomposable if it is a concatenation of smaller nonempty chord diagrams.

 \begin{definition}\label{non-crossing}
We say a pair of chords $(a_i,b_i)$ and $(a_j,b_j)$ \emph{cross} if $a_i<a_j<b_i<b_j$ or $a_j<a_i<b_j<b_i$.  
A chord diagram is \emph{non-crossing} if it contains no pair of crossing chords.
 \end{definition}

This definition has the suggested meaning in the graphical representation as arcs above the linearly ordered endpoints:  two chords cross precisely when the arcs cross.

 \begin{definition}\label{intersection graph}\label{connected chord diagram}
The \emph{intersection graph} of a chord diagram is a graph such that for each chord $c_i$ in a chord diagram, there is a vertex $v_i$ in the intersection graph. Two vertices $v_1$ and $v_2$ are adjacent if the chords $c_1$ and $c_2$ cross in the chord diagram. A chord diagram is \emph{connected} if its intersection graph is connected. A \emph{connected component} of a chord diagram is a subset of chords $S$ such that the chord diagram $S$ is connected.
\end{definition}

\begin{figure}[htb]
    \centering
    \begin{tikzpicture}
        \node[vertex, gray, label = below:$a_1$](a1) at (0,0){};
        \node[vertex, gray, label = below:$a_2$](a2) at ($(a1) + (.8,0)$){};
        \node[vertex, gray,label = below:$a_3$](a3) at ($(a2) + (.8,0)$){};
        \node[vertex, gray,label = below:$b_3$](a4) at ($(a3) + (.8,0)$){};
        \node[vertex, gray,label = below:$b_1$](a5) at ($(a4) + (.8,0)$){};
        \node[vertex, gray,label = below:$a_4$](a6) at ($(a5) + (.8,0)$){};
        \node[vertex, gray,label = below:$a_5$](a7) at ($(a6) + (.8,0)$){};
        \node[vertex, gray,label = below:$b_2$](a8) at ($(a7) + (.8,0)$){};
        \node[vertex, gray,label = below:$b_4$](a9) at ($(a8) + (.8,0)$){};
        \node[vertex, gray,label = below:$b_5$](a10) at ($(a9) + (.8,0)$){};
        
        \draw[chord] (a1) to node[pos=.3,above]{$c_1$} (a5);
        \draw[chord] (a2) to node[pos=.5, above]{$c_2$} (a8);
        \draw[chord] (a3) to node[pos=.5, above]{$c_3$} (a4);
        \draw[chord] (a6) to node[pos=.6, above]{$c_4$} (a10);
        \draw[chord] (a7) to node[pos=.9, above]{$c_5$} (a9);

        \node[vertex, label = below:$v_1$] (v1) at (9,.5){};
        \node[vertex, label = below:$v_2$] (v2) at (10.5,1){};
        \node[vertex, label = below:$v_3$] (v3) at (10,0){};
        \node[vertex, label = below:$v_4$] (v4) at (12,1){};
        \node[vertex, label = below:$v_5$] (v5) at (12,0){};
        \draw[edge] (v1) -- (v2) --(v4);
        \draw[edge] (v2) -- (v5);
        
    \end{tikzpicture}
    \caption{An indecomposable (\cref{indecomposable}), but still non-connected (\cref{connected chord diagram}), chord diagram, and its intersection graph (\cref{intersection graph}).}
    \label{figure chord example}
\end{figure}

Note that connectivity in chord diagrams is not defined in the graph theoretical sense on the chord diagram as a 1-regular graph, but rather on the intersection graph, or on the arcs of the chord diagram, see \cref{figure chord example}. 

\begin{example}\label{eg Yukawa chord}
In the Yukawa graphs of \cref{eg yukawa 1}, the fermion line gives a linear order to the vertices, so labelling these vertices $\{1, \ldots, 2n\}$, and hence the meson lines themselves define a chord diagram according to \cref{rooted chord diagram}. Drawing the meson lines as arcs as in \cref{fig yukawa}, we obtain the usual linear graphical representation. Since these Yukawa graphs are formed by inserting the one loop bubble into itself, there is always a chord $(1,2n)$ and the chords are non-crossing.  Consequently, a Yukawa graph is an indecomposable (\cref{indecomposable}) non-crossing (\cref{non-crossing}) chord diagram, and all indecomposable non-crossing chord diagrams correspond to Yukawa graphs of this type.  
\end{example}

While the earlier definitions were standard, the next definitions are special to the needs of the chord diagram expansion solutions of Dyson--Schwinger equations.

\begin{definition}\label{intersection order}
For a connected chord diagram $C$, the {\em intersection order} on $C$ is defined recursively as follows: starting with 1, label the root chord with the next available label, then remove the root chord and label the resulting sequence of nested connected components recursively in the order of their first points.  
\end{definition}

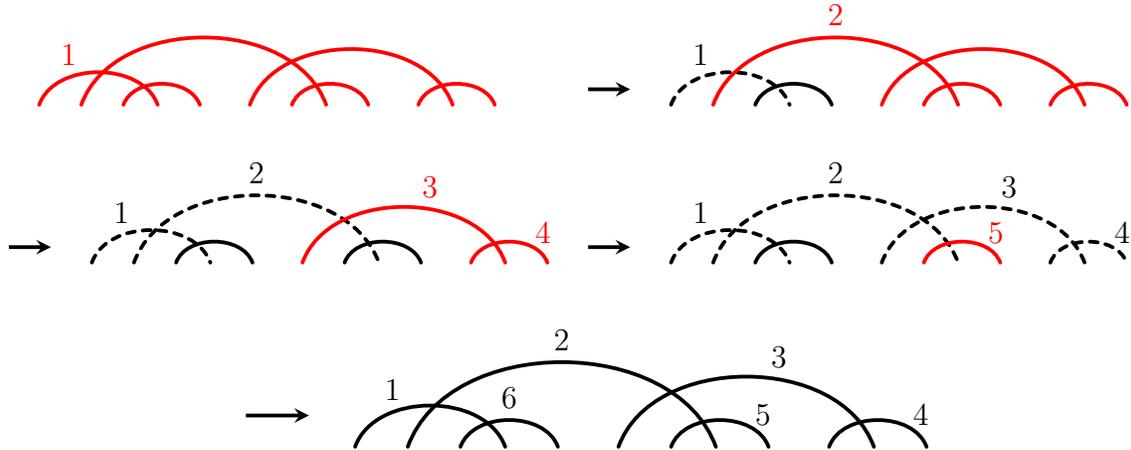
\begin{figure}[htb]
    \centering
    \begin{tikzpicture}[scale=.7]
        \node (a1) at (0,0){};
        \node (a2) at ($(a1) + (.8,0)$){};
        \node (a3) at ($(a2) + (.8,0)$){};
        \node (a4) at ($(a3) + (.8,0)$){};
        \node (a5) at ($(a4) + (.8,0)$){};
        \node (a6) at ($(a5) + (.8,0)$){};
        \node (a7) at ($(a6) + (.8,0)$){};
        \node (a8) at ($(a7) + (.8,0)$){};
        \node (a9) at ($(a8) + (.8,0)$){};
        \node (a10) at ($(a9) + (.8,0)$){};
        \node (a11) at ($(a10) + (.8,0)$){};
        \node (a12) at ($(a11) + (.8,0)$){};
        
        \draw[chord,red ] (a1) to node[pos=.3,above]{$1$} (a4);
        \draw[chord,red] (a2) to  (a8);
        \draw[chord,red] (a3) to  (a5);
        \draw[chord,red] (a6) to   (a11);
        \draw[chord,red] (a7) to   (a9);
        \draw[chord,red] (a10) to  (a12);
        
        \draw[line width=.5mm, -stealth] (10.5,.5)-- +(.8,0);
        
         \node (a1) at (12,0){};
        \node (a2) at ($(a1) + (.8,0)$){};
        \node (a3) at ($(a2) + (.8,0)$){};
        \node (a4) at ($(a3) + (.8,0)$){};
        \node (a5) at ($(a4) + (.8,0)$){};
        \node (a6) at ($(a5) + (.8,0)$){};
        \node (a7) at ($(a6) + (.8,0)$){};
        \node (a8) at ($(a7) + (.8,0)$){};
        \node (a9) at ($(a8) + (.8,0)$){};
        \node (a10) at ($(a9) + (.8,0)$){};
        \node (a11) at ($(a10) + (.8,0)$){};
        \node (a12) at ($(a11) + (.8,0)$){};
        
        \draw[chord,dashed] (a1) to node[pos=.3,above]{$1$} (a4);
        \draw[chord,red] (a2) to node[pos=.5, above]{$2$} (a8);
        \draw[chord] (a3) to  (a5);
        \draw[chord,red] (a6) to   (a11);
        \draw[chord,red] (a7) to  (a9);
        \draw[chord,red] (a10) to  (a12);

        \draw[line width=.5mm, -stealth] (-.5,-2.5)-- +(.8,0);
         \node (a1) at (1,-3){};
        \node (a2) at ($(a1) + (.8,0)$){};
        \node (a3) at ($(a2) + (.8,0)$){};
        \node (a4) at ($(a3) + (.8,0)$){};
        \node (a5) at ($(a4) + (.8,0)$){};
        \node (a6) at ($(a5) + (.8,0)$){};
        \node (a7) at ($(a6) + (.8,0)$){};
        \node (a8) at ($(a7) + (.8,0)$){};
        \node (a9) at ($(a8) + (.8,0)$){};
        \node (a10) at ($(a9) + (.8,0)$){};
        \node (a11) at ($(a10) + (.8,0)$){};
        \node (a12) at ($(a11) + (.8,0)$){};
        
         \draw[chord,dashed ] (a1) to node[pos=.3,above]{$1$} (a4);
        \draw[chord,dashed] (a2) to node[pos=.5, above]{$2$} (a8);
        \draw[chord] (a3) to   (a5);
        \draw[chord,red] (a6) to node[pos=.6, above]{$3$} (a11);
        \draw[chord] (a7) to  (a9);
        \draw[chord,red] (a10) to node[pos=.9, above]{$4$} (a12);

        \draw[line width=.5mm, -stealth] (10.5,-2.5)-- +(.8,0);
         \node (a1) at (12,-3){};
        \node (a2) at ($(a1) + (.8,0)$){};
        \node (a3) at ($(a2) + (.8,0)$){};
        \node (a4) at ($(a3) + (.8,0)$){};
        \node (a5) at ($(a4) + (.8,0)$){};
        \node (a6) at ($(a5) + (.8,0)$){};
        \node (a7) at ($(a6) + (.8,0)$){};
        \node (a8) at ($(a7) + (.8,0)$){};
        \node (a9) at ($(a8) + (.8,0)$){};
        \node (a10) at ($(a9) + (.8,0)$){};
        \node (a11) at ($(a10) + (.8,0)$){};
        \node (a12) at ($(a11) + (.8,0)$){};
        
       \draw[chord,dashed ] (a1) to node[pos=.3,above]{$1$} (a4);
       \draw[chord,dashed] (a2) to node[pos=.5, above]{$2$} (a8);
       \draw[chord] (a3) to (a5);
       \draw[chord,dashed] (a6) to node[pos=.6, above]{$3$} (a11);
       \draw[chord,red] (a7) to node[pos=.9, above]{$5$} (a9);
       \draw[chord,dashed] (a10) to node[pos=.9, above]{$4$} (a12);

       \draw[line width=.5mm, -stealth] (4,-5.7)-- +(1.2,0);
       \node (a1) at (6,-6.5){};
       \node (a2) at ($(a1) + (1,0)$){};
       \node (a3) at ($(a2) + (1,0)$){};
       \node (a4) at ($(a3) + (1,0)$){};
       \node (a5) at ($(a4) + (1,0)$){};
       \node (a6) at ($(a5) + (1,0)$){};
       \node (a7) at ($(a6) + (1,0)$){};
       \node (a8) at ($(a7) + (1,0)$){};
       \node (a9) at ($(a8) + (1,0)$){};
       \node (a10) at ($(a9) + (1,0)$){};
       \node (a11) at ($(a10) + (1,0)$){};
       \node (a12) at ($(a11) + (1,0)$){};
       
       \draw[chord] (a1) to node[pos=.3,above]{$1$} (a4);
       \draw[chord] (a2) to node[pos=.5, above]{$2$} (a8);
       \draw[chord] (a3) to node[pos=.5, above]{$6$} (a5);
       \draw[chord] (a6) to node[pos=.6, above]{$3$} (a11);
       \draw[chord] (a7) to node[pos=.9, above]{$5$} (a9);
       \draw[chord] (a10) to node[pos=.9, above]{$4$} (a12);

    \end{tikzpicture}
    \caption{Finding the intersection order (\cref{intersection order}) for a connected chord diagram. At every stage, the connected component to be labeled next is marked in red. The chords that have been labeled before, and are no longer relevant for determining connectivity of the remainder, are dashed. Solid black chords will be labeled after all red cords are labeled.}
    \label{figure chord intersection order}
\end{figure}

As an example let us consider how to label the chords of the diagram in \cref{figure chord intersection order} in intersection order.  The root chord is labelled 1.  Removing the root chord leaves two connected components.  The first or outer component, call it $C_1$ (marked red in the top right drawing of \cref{figure chord intersection order}), has four chords and the second or inner one, call it $C_2$ has a single chord. Following the definition of the intersection order, we will next label $C_1$ in the intersection order.  The root chord of $C_1$, then, gets label 2.  Next we recursively look at the components resulting from removing the root of $C_1$.  Again two components result.  The first of these (marked red in the third drawing of \cref{figure chord intersection order}) gets labelled next, getting labels 3 and 4 for its two chords according to the intersection order.  Now we are ready to label the second component that resulted from removing the root of $C_1$ (marked red in the fourth drawing).  This component has only one chord which gets label 5.  Finally, we are done labelling $C_1$ and so we move to labelling $C_2$, giving its solitary chord the next available label of 6.  Further examples can be found in \cite{marie_chord_2013, courtiel_terminal_2017}.

\begin{definition}\label{terminal chord}\label{1-terminal}
A chord $c = (a, b)$ in a chord diagram $C$ is {\em terminal} if there is no chord $c' = (a', b')$ such that $a < a' < b < b'$, that is, there is no chord $c'$ crossing $c$ on the right. A diagram $C$ is {\em 1-terminal} if it has exactly one terminal chord. 
\end{definition}

 The diagram in \cref{figure chord intersection order} has three terminal cords, labelled 4,5, and 6 in intersection order.  Note that the intersection order does not agree with the left to right order by sources or by sinks.
Observe that the chord with rightmost endpoint in each connected component of a chord diagram is necessarily terminal. In particular,  1-terminal chord diagrams are connected. Furthermore, we require the following key fact characterizing 1-terminality.

\begin{lemma}[{\cite[Corollary 4.10]{nabergall_combinatorics_2023}}]
\label{1-terminal characterization}
Let $C$ be a connected chord diagram. Then $C$ is 1-terminal if and only if $C - c_{1}$ is 1-terminal, where $c_{1}$ is the root chord.
\end{lemma}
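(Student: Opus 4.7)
The plan is to analyze precisely how the set of terminal chords changes when the root chord is removed. The key structural observation I will exploit is an asymmetry in \cref{terminal chord}: a chord $c = (a,b)$ fails to be terminal only when some other chord \emph{right-crosses} it, meaning the other chord's source lies strictly between $a$ and $b$. Since the root chord is $c_1 = (1, b_1)$ and every other chord has source strictly greater than $1$, the inequality $a' < 1 < b' < b_1$ witnessing that $c_1$ right-crosses some chord $(a', b')$ can never hold. Thus $c_1$ never right-crosses any chord in $C$, even though (in a connected diagram with more than one chord) it will certainly be right-crossed by others.

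From this observation I will deduce that for every $c \in C \setminus \{c_1\}$, $c$ is terminal in $C$ if and only if $c$ is terminal in $C - c_1$. The only chord whose presence could affect the terminality of $c$ when passing between $C$ and $C - c_1$ is $c_1$ itself, and we have just seen that $c_1$ is never a witness of non-terminality. Consequently, the set of terminal chords of $C - c_1$ equals the set of terminal chords of $C$ with $c_1$ omitted (if $c_1$ was terminal) or unchanged (if $c_1$ was not).

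The last step is to use connectedness to rule out $c_1$ being terminal whenever $C$ has more than one chord. Since $C$ is connected, its intersection graph (\cref{intersection graph}) is connected, so $c_1$ is crossed by some other chord $c' = (a',b')$. The crossing condition combined with $a' > 1$ forces $1 < a' < b_1 < b'$, which exhibits $c'$ as a right-crossing of $c_1$; therefore $c_1$ is not terminal. Combined with the previous paragraph, the terminal chords of $C$ and of $C - c_1$ coincide as sets, and hence $C$ is 1-terminal if and only if $C - c_1$ is. The trivial case where $C$ consists of the single root chord is handled separately: $C$ is vacuously 1-terminal and $C - c_1$ is empty, which either falls under the convention in \cite{nabergall_combinatorics_2023} or is excluded from the statement.

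I do not anticipate a serious obstacle. The only subtlety is tracking the asymmetric role of right-crossings in the definition of terminality, and recognizing that the leftmost position of the root's source is precisely what makes this asymmetry work in our favor; once this is isolated, both implications reduce to the identification of the two sets of terminal chords.
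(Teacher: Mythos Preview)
Your argument is correct. The paper does not supply its own proof of this lemma; it simply cites \cite[Corollary~4.10]{nabergall_combinatorics_2023}, so there is nothing internal to compare against. Your approach---observing that the root chord, having the leftmost source, can never serve as the right-crossing witness of non-terminality for any other chord, and that by connectedness it is itself right-crossed and hence not terminal when $|C|>1$---cleanly identifies the terminal-chord sets of $C$ and $C-c_1$ and gives both implications at once. The only loose end you flag yourself is the degenerate case $|C|=1$, where $C-c_1$ is empty; this is indeed a matter of convention in the cited source and is irrelevant for the use the present paper makes of the lemma (in the proof of \cref{prop interesting chord classes}).
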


Our goal is to establish a bijection between chord diagrams and tubings.
Tubings are constructed recursively, so we will work with a corresponding recursive construction for chord diagrams. The constructions are aligned by via insertion places for each type of object. 

\begin{definition}\label{chord diagram insertion place}
Let $C$ be a chord diagram of size $n$. Then the \emph{chord diagram insertion places (CDIPs) } for $C$ are $y_i=\{i,i+1\}$ for $i \in \{1,2, \dots, 2n-1\}$.  We order the chord diagram insertion places by their index.
\end{definition}

The chord diagram insertion places can be visualized as the space between ends of chords in a chord diagram, see \cref{figure CDIP}. 

\begin{figure}[htb]
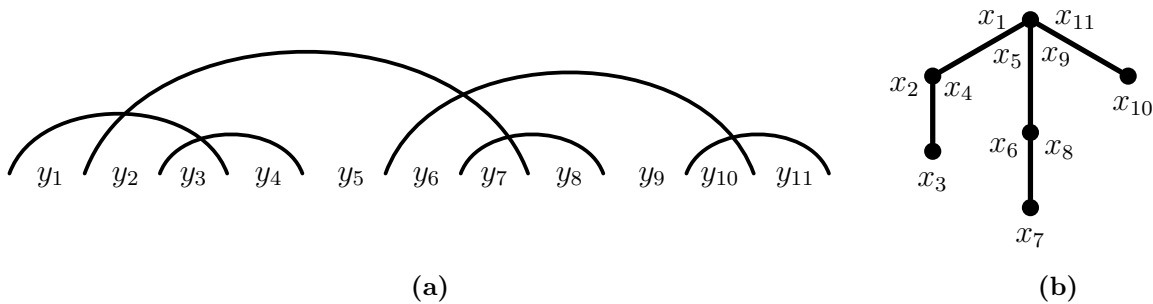

    \centering
    \begin{subfigure}[b]{.7\linewidth}
        \input{figure_CDIP.tikzpicture}
        \vspace{.8cm}
        \caption{}
        \label{figure CDIP}
    \end{subfigure}
    \begin{subfigure}[b]{.29 \linewidth}
        \input{figure_RTIP.tikzpicture}
        \caption{}
        \label{figure RTIP}
    \end{subfigure}
    
    \caption{ {\bfseries(a)} The chord diagram of \cref{figure chord intersection order} with CDIPs (\cref{chord diagram insertion place}) $y_i$  indicated.  {\bfseries (b)} A rooted tree with RTIPs (\cref{rooted tree insertion place}) $x_i$  indicated. }
    
\end{figure}

\begin{definition}\label{rooted tree insertion place}
Given a plane rooted tree $t$ a \emph{rooted tree insertion place (RTIP)} is a place where a new leaf could be added. That is, at each vertex there is an insertion place before any of the children of the vertex, between any two consecutive children, and after the last child, and all insertion places are of this form.

We place an order on the RTIPs as follows.  Let $r$ be the root of $t$ and let $t_1,t_2, \dots, t_k$ be the list of subtrees (whose roots are the children of $r$ in that order). Define the order on the insertion places, $x_i$, of $t$ in this order:
\begin{itemize}
\item the insertion place before $t_1$.

\item all insertion places of $t_1$ in this same order, applied recursively on $t_1$.
\item the insertion place between $t_1$ and $t_2$.
\item all insertion places of $t_2$ in this same order recursively on $t_2$.

$\vdots$

\item the insertion place after $t_k$

\end{itemize}
\end{definition}

To think of this intuitively, this order on insertion places can be visualized by drawing the tree and traversing around it counterclockwise starting to the left of the root, see \cref{figure RTIP}.

\subsection{Bijection between tubings and chord diagrams}\label{sec bijection tubings chord diagrams}

The next order of business is defining a bijection between tubings and connected chord diagrams.  This will be the bijection that gives us the connection between tubing expansions of Dyson--Schwinger equations and chord diagram expansions of Dyson--Schwinger equations.  Some of the special classes of tubings defined previously will correspond to interesting classes of chord diagrams under this bijection.

We first introduce a map from RTIPs (\cref{rooted tree insertion place}) to CDIPs (\cref{chord diagram insertion place}).
\begin{definition}\label{map RTIP to CDIP}
Given a plane rooted tree $t$ with $n$ vertices and a chord diagram $C$ with $n$ chords, let $x_i$ be the $i$th RTIP of $t$ and let $y_i$ be the $i$th CDIP of $C$ under the orders given in \cref{rooted tree insertion place} and \cref{chord diagram insertion place}.  Then define the map $f$ via $f(x_i) = y_i$.
\end{definition}

\begin{lemma}\label{bijection RTIP to CDIP}
The number of RTIPs on a plane rooted tree $t$ with $n$ vertices is the same as the number of CDIPs on a chord diagram $C$ with $n$ chords and so $f$ is a bijection between the RTIPs of $t$ and the CDIPs of $C$.
\end{lemma}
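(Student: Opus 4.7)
The plan is to simply count the RTIPs and CDIPs and show both equal $2n-1$; then the bijectivity of $f$ is immediate from its definition as the order-preserving map between two finite totally ordered sets of equal size.

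For the chord diagram side, a chord diagram with $n$ chords has $2n$ endpoints labelled $1, 2, \ldots, 2n$, and by \cref{chord diagram insertion place} the CDIPs are indexed by $i \in \{1, \ldots, 2n-1\}$, so there are exactly $2n-1$ CDIPs.

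For the tree side, I would count RTIPs vertex by vertex. By \cref{rooted tree insertion place}, each vertex $v$ contributes one insertion place before its first child, one between each pair of consecutive children, and one after its last child, giving $\od v + 1$ RTIPs at $v$ (including the case $\od v = 0$, where the single insertion place sits at the leaf). Summing over all vertices,
\[
    \#\text{RTIPs}(t) = \sum_{v \in V(t)} (\od v + 1) = |V(t)| + \sum_{v \in V(t)} \od v = n + (n-1) = 2n-1,
\]
since $\sum_v \od v$ counts each edge of $t$ exactly once from its upper endpoint and $t$ has $n-1$ edges.

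Thus the two ordered sets $\{x_1, \ldots, x_{2n-1}\}$ and $\{y_1, \ldots, y_{2n-1}\}$ have the same cardinality, and the map $f(x_i) = y_i$ from \cref{map RTIP to CDIP} is a bijection by construction. There is no substantial obstacle here; the only small point worth writing out carefully is the vertex-by-vertex enumeration of RTIPs, to make sure the recursive definition of the order in \cref{rooted tree insertion place} accounts for every ``slot'' around each vertex exactly once.
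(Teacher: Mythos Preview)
Your proof is correct and follows essentially the same strategy as the paper: count both sides and observe they equal $2n-1$. The only difference is that you count RTIPs by the direct summation $\sum_v(\od v+1)=n+(n-1)$, whereas the paper uses induction on $n$ (adding a new leaf at an existing insertion place removes one slot and creates three, for a net gain of two); your argument is arguably cleaner, but both are elementary and equivalent.
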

\begin{proof}
Let $P_n$ be the number of RTIPs on a plane rooted tree with $n$ vertices.
Let $Q_n$ be the number of CDIPs on a chord diagram with $n$ chords.

We first claim that $P_n=2n-1$ by induction. Base case $n=1$. There is only one insertion place on a single vertex. So $P_1=1$.
Now, suppose for a plane rooted tree with $n$ vertices that $P_n=2n-1$.
Suppose a new vertex, $v_j$, is added at an insertion place, $x$, on an existing vertex $v_i$. Note that every plane rooted tree on $n+1$ vertices can be constructed by adding a new vertex in this way to a plane rooted tree on $n$ vertices. Then the new vertex $v_j$ has one insertion place; $v_i$ loses the $x$ insertion place (replaced by the edge $v_0v_1$) and thereby gains two new insertion places on either side of the edge $v_0v_1$. Therefore, $P_{n+1} = P_n -1+2+1=P_n+2$ for $n \geq 1$. Thus $P_{n+1}=2n-1+2=2(n+1)-1$.

We next claim that $Q_n=2n-1$. This follows directly from the definition of chord diagram insertion places. There are exactly $2n-1$ pairs $\{i,i+1\}$ for $i \in \{1,2, \dots , 2n-1\}$. 

Thus, $P_n=Q_n$ and hence $f$ is a bijection.
\end{proof}

Now we are ready to define the main bijection.

\begin{definition}
\label{map tubed RTs to connected CDs}
Let $\tau$ be a tubing of a plane rooted tree $t$ with $n$ vertices. Then, define the chord diagram $\theta(\tau)$ as follows.

Base case: if $t=\bullet$ then $\tau$ is the unique tubing of $\bullet$ and $\theta(\tau)$ is the unique chord diagram with one chord.

Recursive step: Let $t''$, $t'$, $\tau''$, and $\tau'$ be as in \cref{rem tau prime and tau double prime}. Let $C''$ and $C'$ be $\theta(\tau'')$ and $\theta(\tau')$ respectively. Let $x$ be the insertion place of $t''$ where $t'$ is inserted into $t''$. Let $y$ be the CDIP of $C''$ given by $f(x)$. 
Then define $C= \theta(\tau)$ as follows:
Insert $C'$ into $y$ such that the resulting chord diagram is a disconnected indecomposable chord diagram. Let $q'$ be the root chord of $C'$. Take the left end of $ q'$ and move it before all chords of $C''$. The resulting chord diagram is $C$. Graphically, the procedure is shown in \cref{figure bijection construction}.
\end{definition}

\begin{figure}[htb]
    \centering
    \input{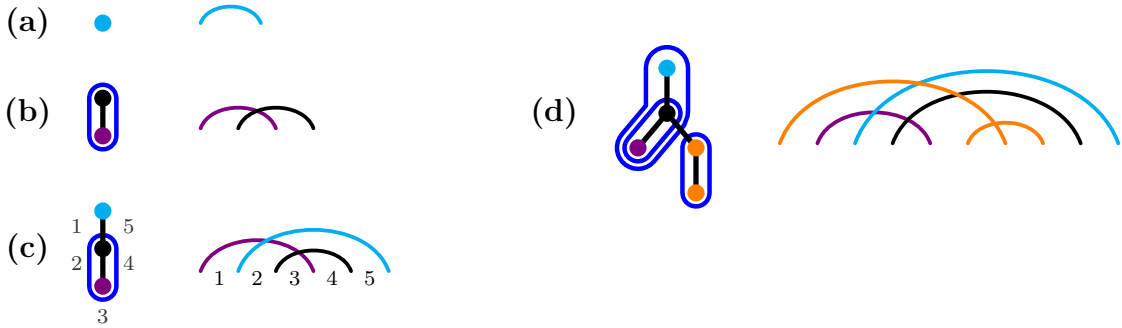}
    \caption{Step by step construction of the chord diagram corresponding to a tubing under $\theta$ (\cref{map tubed RTs to connected CDs}. Corresponding objects have the same color. {\bfseries (a)} a single vertex (with a tube which is not shown) corresponds to a single chord. {\bfseries (b)} A second vertex (purple) has been attached below the upper one. The new chord crosses the old one. {\bfseries (c)} The tube of b) is attached below a single vertex. The resulting chord diagram has five   insertion places (\cref{chord diagram insertion place}), indicated by small numbers.  {\bfseries (d)} another copy of b) is attached at position 4 to the tree c).  }
    \label{figure bijection construction}
\end{figure}

\begin{remark} \label{remark:map tubed RTs to CDs}
The description provided in \cref{map tubed RTs to connected CDs} is a more intuitive way to describe the map $\theta$. The following is a more formal definition of the recursive step of the map.  With $t'', t', \tau'', \tau', C''$, and $C'$ as above, define $C=\theta(\tau)$ as follows:

Let $\{1, \dots, 2n\}$ be the vertices of $C''$ with CDIP $y=\{i,i+1\}$. 
Similarly let $\{1, \dots, 2m\}$ be the vertices of $C'$ with root chord $(1,k)$. The vertices of $C$ are $\{1, \dots, (2n+2m)\}$.
The chords of $C$ are defined from the chords of $C''$ and $C'$ as follows:

\begin{enumerate}[label=(\roman*)]
\item For each chord $\{a_j,b_j\}$ from $C'$:

\begin{itemize}
    \item the root chord $\{1,k\}$ becomes $\{1,k+i\}$
    \item all other chords $\{a_j,b_j\}$ become $\{a_j+i,b_j+i\}$.
\end{itemize}

\item For each chord $\{a_j,b_j\}$ from $C''$:
\begin{itemize}
    \item if $a_j \leq i$ and $b_j \leq i$, then $\{a_j,b_j\}$ becomes $\{a_j+1,b_j+1\}$ 
    \item if $a_j \leq i$ and $b_j > i$, then $\{a_j,b_j\}$ becomes $\{a_j+1,b_j+2m\}$ 
    \item if $a_j > i$ and $b_j \leq i$, then $\{a_j,b_j\}$ becomes $\{a_j+2m,b_j+2m\}$ 
\end{itemize}

(or to say it another way, if $a_j \text{ (or } b_j) \leq i$, then $a_j \text{ (or } b_j)$ becomes  $a_j +1 \text{ (or } b_j +1)$, and if $a_j \text{ (or } b_j) > i$, then $a_j \text{ (or } b_j)$ becomes  $a_j +2m \text{ (or } b_j + 2m)$).
\end{enumerate}
\end{remark}

Write $\tau' \oplus_{x} \tau''$ for the tubing obtained by inserting $\tau'$ into $\tau''$ at insertion place $x$ and write $C' \oplus_{y} C''$ for chord diagram obtained by inserting $C'$ into $C''$ at insertion place $y$ in the above manner.

\begin{lemma}\label{lem:IP shift}
    Let $C''$ be a chord diagram with insertion places $y''_1,\dots,y''_m$. Let $C'$ be a chord diagram with $k$ CDIPs that is inserted into $C''$ at $y''_j$ to form chord diagram $C$. Then the insertion places $y''_1,\dots,y''_{j-1}$ in $C''$ become $y_2,\dots, y_{j}$ in $C$. The insertion place $y''_i$ gets split up by the insertion of $C'$ and together with the insertion places of $C'$ becomes $y_{j+1},\dots y_{j+k+1}$ in $C$. Finally, the insertion places $y''_{j+1},\dots,y''_m$ of $C''$ become $y_{j+k+1},\dots, y_{m+k+1}$ in $C$. 
\end{lemma}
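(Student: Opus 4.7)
The plan is to prove this lemma by directly tracking chord endpoints through the two-stage construction of $C$ from $C''$ and $C'$ described in \cref{map tubed RTs to connected CDs}, and then reading off the resulting insertion places from consecutive pairs of endpoints. If $C''$ has $2n$ endpoints and $C'$ has $2m'$ endpoints, then $m = 2n-1$, $k = 2m'-1$, and $C$ has $2n + 2m'$ endpoints and thus $m+k+1$ insertion places, which matches the index range in the claim.

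The first step is to analyze the intermediate diagram $\widetilde C$ obtained by inserting $C'$ into $C''$ at $y''_j$ \emph{without} yet relocating the left end of the root chord of $C'$. Using the explicit position-level rules in \cref{remark:map tubed RTs to CDs}, the endpoints of $C''$ originally in positions $1,\ldots,j$ stay fixed, the endpoints of $C'$ occupy positions $j+1,\ldots,j+2m'$, and the endpoints of $C''$ from position $j+1$ onwards shift up by $k+1$. Reading off consecutive pairs in order gives, successively, the insertion places $y''_1,\ldots,y''_{j-1}$ (unshifted), the left half and right half of the split of $y''_j$ flanking the $k$ insertion places of $C'$ (each shifted by $j$), and finally $y''_{j+1},\ldots,y''_m$ each shifted by $k+1$.

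The second step accounts for the move of the left endpoint of the root chord of $C'$ from position $j+1$ of $\widetilde C$ to position $1$ of $C$. This operation shifts the endpoints originally in positions $1,\ldots,j$ up by one, leaves positions $\geq j+2$ unchanged, creates a brand-new insertion place $y_1 = \{1,2\}$ at the far left (between the moved endpoint and the former first endpoint of $C''$), and amalgamates the left half of the split of $y''_j$ with $y'_1$ into a single insertion place of $C$, since the endpoint they previously shared (at position $j+1$ of $\widetilde C$) has been moved away. Combining this bookkeeping with the first step yields the claimed correspondence: the shift by one identifies $y''_1,\ldots,y''_{j-1}$ with $y_2,\ldots,y_j$; the amalgamated left half together with $y'_2,\ldots,y'_k$ and the right half occupy the range $y_{j+1},\ldots,y_{j+k+1}$; and the remaining $y''_{j+1},\ldots,y''_m$ fill out the tail.

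The main obstacle is purely bookkeeping — keeping straight the two-stage shift and the simultaneous creation of $y_1$ and absorption of $y'_1$ by the move, so that the counts balance ($k+2$ candidate places in the middle block collapse to $k+1$) and the index ranges come out as stated. No conceptual difficulty arises once one works directly at the level of endpoint positions, which is the natural language of the formal description in \cref{remark:map tubed RTs to CDs}.
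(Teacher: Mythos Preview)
Your proof is correct and follows essentially the same approach as the paper's own proof: both track how endpoint positions shift under the insertion described in \cref{map tubed RTs to connected CDs} and then read off the resulting insertion places. Your version is more detailed, explicitly factoring the construction into two stages (insert $C'$, then slide the root endpoint left), which makes the bookkeeping cleaner; the paper collapses this into a single paragraph of informal counting. One minor point of phrasing: where you describe $y_1$ as a \emph{new} place and $y_{j+1}$ as the amalgamation of the left half of $y''_j$ with $y'_1$, the paper (as the figure and the remark following the lemma indicate) instead views $y'_1$ as being split by the insertion, with one piece landing at $y_1$ --- but these are two equivalent descriptions of the same hybrid gaps, and your accounting is internally consistent.
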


\begin{proof}
    The addition of $C'$ which contains $k$ insertion places adds $k+1$ insertion places to the chord diagram because the insertion cuts $y''_j$ into two IPs: the one to the left of the inserted chords and the one to the right. The indices of the insertion places change because the root chord of $C'$ is moved all the way to the left of $C''$, thereby increasing the index of all insertion places of $C''$ by at least 1. All of the insertion places after $y''_j$ are increased by $k+1$ because of the addition of the $k$ insertion places from $C'$. An example is shown in \cref{figure cdip insertion}.
\end{proof}

\begin{figure}[htb]
	\centering
	\input{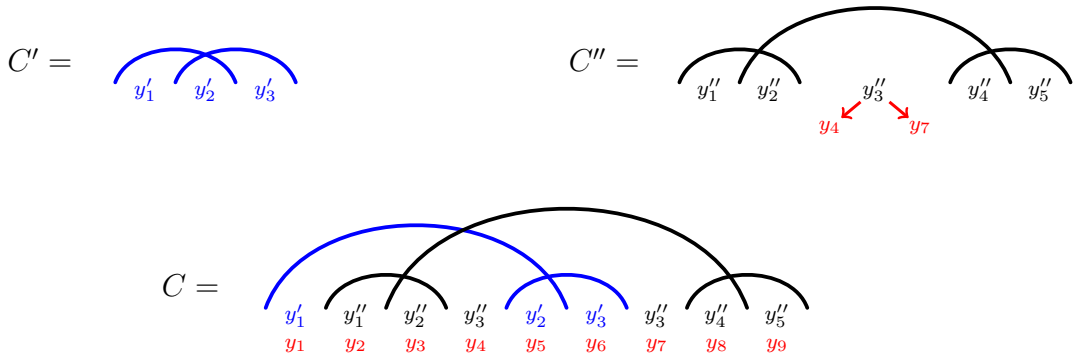}
	\caption{In this example, $C'$ has $k=3$ insertion places. $C'$ is inserted into $C''$ at $y''_3$. In $C$, the insertion places relative to $C'$ and $C''$ are labelled in blue and black, respectively, and the insertion places of $C$ are labelled in red. We can see that $y''_3$ is split into $y_4$ and $y_7$, together with $y_5,y_6$, which come from $C'$.}
	\label{figure cdip insertion}
\end{figure}

Note that the insertion of $C$ into $C''$ similarly splits the insertion place $y'_1$ of $C'$, but in this lemma, aim to see how the insertion places of $C''$ change in $C$.

\begin{theorem} \label{bijection tubed RTs to connected CDs}
$\theta$ is a bijection from the set of tubed rooted plane trees with $n$ vertices to the set of rooted connected chord diagrams with $n$ chords, and its inverse map is called $\mu$.
\end{theorem}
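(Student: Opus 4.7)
My plan is to prove that $\theta$ is a bijection by explicitly constructing the inverse $\mu$, defined recursively using the connected components of $C \setminus \{c_1\}$ (where $c_1$ is the root chord), and then verifying $\mu \circ \theta = \mathrm{id}$ and $\theta \circ \mu = \mathrm{id}$ by induction on the number of chords.

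The first preliminary step is to show by induction that $\theta(\tau)$ is always connected. The base case is immediate. For the inductive step, write $\theta(\tau) = C' \oplus_{f(x)} C''$ where $C' := \theta(\tau')$ and $C'' := \theta(\tau'')$ are connected by the inductive hypothesis. Before the root-chord relocation step in \cref{map tubed RTs to connected CDs}, the intersection graph of the intermediate diagram has exactly two components, namely those of $C'$ and $C''$, since $C'$ sits in a contiguous block of positions that the $C''$-chords either avoid entirely or nest around without crossing. Moving the left end of $C'$'s root chord to position $1$ forces this chord to cross exactly those $C''$-chords whose endpoints straddle the insertion place $f(x)$. Since $C''$ is connected and hence indecomposable, at least one such chord exists, and the two components merge into one.

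Next I would define $\mu$ recursively. The single-chord diagram maps to the unique tubing of $\bullet$. For a connected $C$ with at least two chords and root chord $c_1 = (1, r)$, let $C''$ be the connected component of $C \setminus \{c_1\}$ containing position $2$, let $C' := (C \setminus C'') \cup \{c_1\}$, and let $y$ be the CDIP of $C''$ (after relabeling positions) between the last $C''$-position preceding the $C'$-block in $C$ and the first $C''$-position following it. Recursively set $\mu(C) := \mu(C') \oplus_{f^{-1}(y)} \mu(C'')$, where $f^{-1}$ is interpreted using the bijection of \cref{bijection RTIP to CDIP} applied to $\mu(C'')$ and $C''$. The key lemma needed to show this inverts $\theta$ is that when $C = \theta(\tau) = \theta(\tau') \oplus_{f(x)} \theta(\tau'')$, the chords of $\theta(\tau') \setminus \{c_1\}$ occupy positions in the contiguous interval $[i+2, i+2m]$, while the chords of $\theta(\tau'')$ occupy positions in the complementary set $[2, i+1] \cup [i+2m+1, 2(n+m)]$. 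A direct case analysis shows that no pair of chords split between these two disjoint ranges can cross, so $\theta(\tau'')$ is precisely the connected component of $C \setminus \{c_1\}$ containing position $2$.

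Once this separation property is established, the identities $\mu \circ \theta = \mathrm{id}$ and $\theta \circ \mu = \mathrm{id}$ both follow by induction on the number of chords (respectively vertices), since the recursive decompositions provided by $\theta$ and $\mu$ are inverse to each other. The main technical obstacle is establishing the crossing-free property between $C'$- and $C''$-chords in $C \setminus \{c_1\}$: this requires careful bookkeeping of how positions are permuted through both the insertion step and the subsequent root-chord relocation step of \cref{map tubed RTs to connected CDs}, as made explicit in \cref{remark:map tubed RTs to CDs}. Once established, uniqueness of the decomposition $C = C' \oplus_y C''$ is immediate from the uniqueness of connected components, which makes $\mu$ well-defined without ambiguity.
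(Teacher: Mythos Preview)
Your proposal is correct and follows essentially the same approach as the paper: construct the inverse $\mu$ by removing the root chord and identifying $C''$ as the outermost connected component (equivalently, the component containing position $2$), then recurse. Your write-up is in fact more careful than the paper's, which simply asserts that ``directly from these definitions, we see that $\mu$ and $\theta$ are mutual inverses'' without explicitly verifying the separation property or the connectedness of the image; you correctly isolate the crossing-free claim between the $C'$- and $C''$-chords as the technical core, and your position-interval bookkeeping via \cref{remark:map tubed RTs to CDs} is the right way to establish it.
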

\begin{proof}
To prove this, we construct the inverse map, $\mu : C \mapsto \tau$.

Let $C$ be a rooted connected chord diagram.
If $C$ is just one chord, then define 
$\mu(C)$ to be the unique tubing of $\bullet$. 
Otherwise, let $r$ be the root chord. The removal of the root chord results in either a connected chord diagram, or a disconnected, indecomposable chord diagram. Note that this indecomposable diagram will be a sequence of nested connected components, as in \cref{intersection order}. Recall from \cref{intersection graph} that a chord diagram is disconnected if there exist chords that do not intersect with some other chord(s).
In the former case, we let $r$ be $C'$ and $C \backslash r$ be $C''$.
In the latter case, we take $r$ together with all but the outermost component of $C \backslash r$ and call it $C'$, and let the rest of the chord diagram, namely the outer component, be $C''=C \backslash C'$.
In both cases,  $C''$ and $C'$ are each connected rooted chord diagrams.

The last thing we need is the insertion place where $C'$ is inserted into $C''$ to give the chord diagram $C$.  To specify this, let $\eta$ be the smallest index in $C$ greater than 1 of a chord end in $C'$. In $C$, the insertion place to the left of this chord is $y_{\eta-1}$.
%
%
%
So, looking at insertion places of $C''$, we see that $C=C'\oplus_{\eta-2, \eta-1} C''$ and so $y_{\eta-2}$ is the insertion place in $C''$ we are looking for. This is because when $C'$ is inserted into $C''$, the root chord of $C'$ is shifted all the way to the left of $C''$, as described in \cref{map tubed RTs to connected CDs}; and by \cref{lem:IP shift}, the CDIPs to the left of $C'$ in $C$ are increased by 1 after insertion, thus in the inverse, we subtract 1.
Recall from \cref{map RTIP to CDIP} and \cref{bijection RTIP to CDIP} that there exists a map $f^{-1}$ that takes a CDIP and gives an RTIP. 
Recursively let $\tau''=\mu(C'')$ and $\tau'=\mu(C')$  and $x=f^{-1}(y)$ as an RTIP of $t''$.
Now define $t$ to be the insertion of $t'$ into $t''$ at $x$ and define $\tau$ to be the tubing of $t$ given by the tubes of $\tau''$ and $\tau'$ along with one tube containing all of $t$. Then $t$ and $\tau$ give $\mu(C)$.

Directly from these definitions, we see that $\mu$ and $\theta$ are mutual inverses, and the number of vertices in $t$ is equal to the number of chords in $C$. 
\end{proof}

\cref{figure bijection example corollas} shows the bijection $\theta$ for the tubings of a 4-vertex corolla tree, \cref{figure bijection example ladders} for the 4-vertex ladder tree, and \cref{figure bijection example} shows all other 4-vertex trees. 

\begin{figure}[htb]
    \centering
    \input{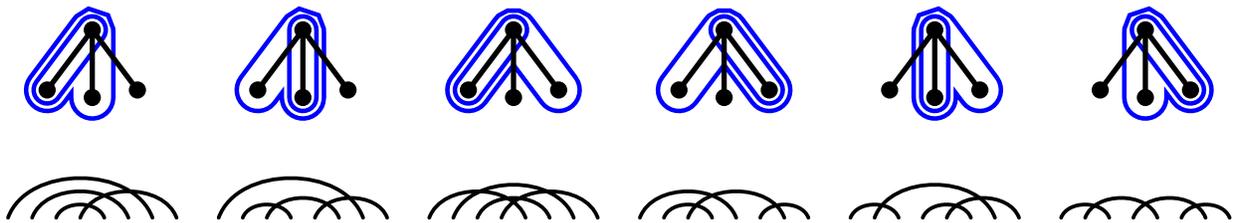}
    \caption{Chord diagrams corresponding to the tubings of the corolla tree $s_4$ of \cref{figure corollas} under the bijection $\theta$ (\cref{map tubed RTs to connected CDs}).  }
    \label{figure bijection example corollas}
\end{figure}

The bijection $\theta$ behaves nicely on the special classes of tubings considered earlier in \cref{sec examples}, including ladders and leaf tubings, as we now demonstrate. We first require several additional notions picking out the associated classes of chord diagrams (see for example \cite{nabergall_combinatorics_2023} for full formal definitions and further background on these notions). 

A {\em subdiagram} $D'$ of a chord diagram $D$ is a subset of its chords. We identify a subdiagram with the chord diagram obtained by standardizing its points to the interval $\{1, 2, \ldots, 2|D'|\}$. A chord diagram $D$ is {\em permutation} if all of its sinks lie to the right of all of its sources. Labelling the sinks of $D$ by the order of their attached sources gives a permutation that determines $D$. Permutation diagrams can also be characterized as those diagrams do not have $\{(1, 2), (3, 4)\}$ as a subdiagram. For a permutation $\sigma$, a permutation diagram $D$ is {\em $\sigma$-avoiding} if $D$ does not contain the permutation diagram determined by $\sigma$ as a subdiagram. Equivalently, if $\pi$ is the permutation determining a permutation chord diagram $D$, then $D$ is $\sigma$-avoiding if $\pi$ does not contain $\sigma$ as a pattern. Recall that a permutation $\pi$ contains $\sigma$ as a pattern if there exists an injection from the elements of $\sigma$ to the elements of $\pi$ that preserves both the left-to-right order and ground set order. 

\begin{proposition}\label{prop interesting chord classes}
The map $\theta$ restricts to 
\begin{enumerate}
\item a bijection between tubed ladders and connected 213-avoiding permutation diagrams, and
\item a bijection between leaf tubings and 1-terminal chord diagrams. 
\end{enumerate}
\end{proposition}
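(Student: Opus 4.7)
I would prove both parts by induction on the number of vertices (equivalently, chords), exploiting the recursive description of $\theta$ from \cref{map tubed RTs to connected CDs}. For (2), if $\tau$ is a leaf tubing of $t$ then its outermost bipartition gives $\tau = \tau' \oplus_x \tau''$ with $t' = \bullet$ a single leaf and $\tau''$ a leaf tubing of $t \setminus t'$. Since $C' = \theta(\tau')$ is a single chord, the insertion formulas of \cref{remark:map tubed RTs to CDs} show that the root chord $c_1$ of $C = \theta(\tau)$ is exactly the shifted copy of this chord, and removing it and re-standardizing positions recovers precisely $C'' = \theta(\tau'')$. Hence by induction $C \setminus \{c_1\} = C''$ is $1$-terminal, and \cref{1-terminal characterization} upgrades this to $C$ being $1$-terminal. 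Conversely, if $C$ is $1$-terminal then $C - c_1$ is also $1$-terminal and hence connected, so the recursion defining $\mu$ forces $C' = c_1$ and $C'' = C - c_1$; by induction $\mu(C - c_1)$ is a leaf tubing of some tree $t''$, and attaching the single vertex $\mu(c_1) = \bullet$ at any RTIP of $t''$ produces a new leaf, so $\mu(C)$ is a leaf tubing.

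For (1), the essential structural feature is that cutting any edge of $\ell_n$ yields two ladders $t'' = \ell_k$ (containing the root) and $t' = \ell_{n-k}$, with $t'$ attached below the bottom vertex of $t''$; this is the ``middle'' RTIP $x_k$ of $\ell_k$, which under $f$ becomes the middle CDIP $y_k$ of $\theta(\tau'')$. Assume inductively that $C'$ and $C''$ are connected 213-avoiding permutation diagrams with $\sigma(1) = 1$. Inserting at $y_k$ (so $i = k$) preserves the permutation-diagram property because every source of $C''$ has index $\leq i$ and every sink has index $> i$; moreover the formulas of \cref{remark:map tubed RTs to CDs} yield $\sigma_C(1) = \sigma_{C'}(1) = 1$, so the new root chord $(1, n+1)$ crosses every other chord and $C$ is connected. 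For 213-avoidance, the same formulas partition $\sigma_C$ into two value bands: positions in $[2, k+1]$ take values in $[j+1, n]$ (with $j = n-k$) inherited from $C''$, while positions in $\{1\} \cup [k+2, n]$ take values in $[1, j]$ inherited from $C'$. A putative 213-pattern $s_1 < s_2 < s_3$ with $\sigma_C(s_2) < \sigma_C(s_1) < \sigma_C(s_3)$ either lies entirely within one band, restricting to a 213-pattern in $\sigma_{C''}$ or $\sigma_{C'}$ forbidden by induction, or straddles both bands, in which case the strict inequality between bands immediately contradicts one of the pattern's two inequalities.

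To close the bijection I would argue by cardinality: $\theta$ is injective by \cref{bijection tubed RTs to connected CDs} and $|\Tub(\ell_n)| = C_{n-1}$ by \cref{lem ladders tubings}, so it suffices to show that the connected 213-avoiding permutation diagrams of size $n$ also number $C_{n-1}$. I would do this through a characterization lemma: a 213-avoiding permutation diagram is connected if and only if $\sigma(1) = 1$. The ``if'' direction is clear since the root chord then crosses every other chord. For the converse, if $\sigma(1) = p > 1$ and $a^* = \sigma^{-1}(1) \geq 2$, then 213-avoidance applied to triples $(i, a^*, l)$ with $i < a^* < l$ forces $\sigma(i) > \sigma(l)$, so the values on $[1, a^*-1]$ are exactly the largest $a^*-1$ elements of $[2, n]$; a direct comparison of chord endpoints then shows that every ``large-value'' chord at source $i \in [1, a^*-1]$ strictly contains every other chord without crossing it, so the diagram has at least two connected components. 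Given this lemma, the connected 213-avoiding diagrams of size $n$ are the 213-avoiding permutations with $\sigma(1) = 1$, enumerated by $C_{n-1}$ (standardize $\sigma(2), \ldots, \sigma(n)$). The main obstacle is this characterization lemma together with the value-band case analysis for 213-avoidance; both rely on the specific fact that ladder tubings always insert at the middle CDIP, which is what enforces the clean value separation needed for the induction to propagate.
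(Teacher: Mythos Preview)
Your proof is correct. Part (2) follows the same line as the paper. Part (1) differs in how you close the bijection: the paper observes that being a 213-avoiding permutation diagram is closed under taking subdiagrams and that $C' \oplus_x C''$ is a permutation diagram \emph{if and only if} $C'$ and $C''$ are permutation diagrams and $x = |C''|$, so the induction step runs in both directions at once; the fact that the ladder RTIP is always $|C''|$ then forces the underlying tree to be a ladder on the way back. You instead establish only the forward direction inductively and finish by cardinality, via the characterization lemma that a 213-avoiding permutation diagram is connected if and only if $\sigma(1)=1$. Your route is slightly longer, but the characterization lemma is an attractive standalone fact that the paper does not isolate, and your counting argument makes the Catalan enumeration explicit rather than implicit. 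One small point: in your ``straddles both bands'' case you say the value inequality between bands ``immediately contradicts one of the pattern's two inequalities''; this is accurate when $s_3$ lies in band A, but when $s_3$ lies in band B the contradiction actually comes from the \emph{position} structure (band B occupies the contiguous interval $[2,k+1]$, so $s_1 < s_2 < s_3 \le k+1$ forces all three into band B, with $s_1=1$ ruled out because $\sigma(s_1)=1$ would make $\sigma(s_2)<1$). The conclusion is unaffected.
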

\begin{proof}
Note that a diagram is permutation and 213-avoiding if and only if all of its subdiagrams are 213-avoiding and are permutation diagrams. Furthermore, for connected chord diagrams $C'$ and $C''$, observe that $C' \oplus_{x} C''$ is permutation if and only if $C'$ and $C''$ are permutation and $x = |C''|$. Now suppose $C'$ and $C''$ are 213-avoiding permutation diagrams and write $\sigma'$ and $\sigma''$ for the permutations determining $C'$ and $C'$, respectively. Then the permutation $\sigma$ determining $C' \oplus_{n} C''$ can be written as $\pi'\pi''$, where $\pi''$ is obtained from $\sigma''$ by increasing every letter by 1 and $\pi'$ is obtained from $\sigma'$ by increasing every letter except 1 by $n = |C''|$. In particular, other than 1 every letter of $\pi'$ is larger than any letter of $\pi''$, so if 213 is a pattern of $\pi'\pi''$ then any instance of the pattern must lie in the first $|\pi'|$ letters, contradicting the assumption that $\pi'$ is 213-avoiding. It follows that $C' \oplus_{n} C''$ is 213-avoiding. Finally, note that the rooted tree insertion place for ladders is always $n$. Then combining all of the above facts and applying the construction of $\theta$, we conclude that $\tau$ is a tubed ladder if and only if $\theta(\tau)$ is a connected 213-avoiding permutation diagram, as desired for the first point. 

We now turn to the second point. By construction, a tubing $\tau$ is a leaf tubing if and only if $\theta(\tau)$ is constructed from the empty diagram by iteratively inserting root chords in such a way that the diagram is connected at each step. By  \cref{1-terminal characterization} this is equivalent to $\theta(\tau)$ being 1-terminal.  
\end{proof}

\begin{figure}[htb]
    \centering
    \input{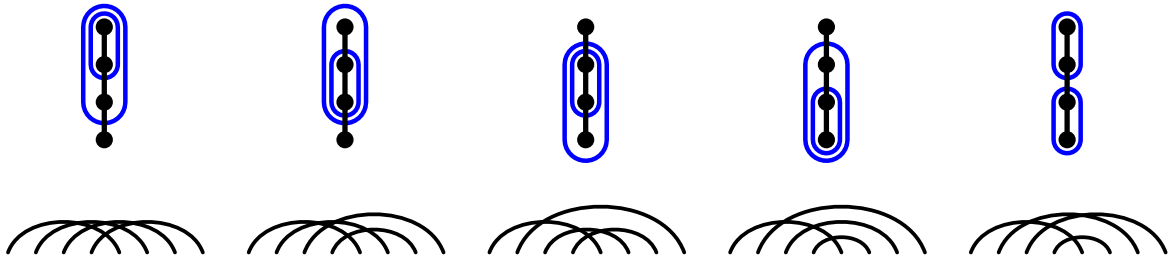}
    \caption{Chord diagrams corresponding to the tubings of the ladder tree $\ell_4$ of \cref{figure ladders} under the bijection $\theta$ (\cref{map tubed RTs to connected CDs}). Observe that these chord diagrams are different from the ones constructed in \cref{rem ladders chord diagrams}}
    \label{figure bijection example ladders}
\end{figure}

Connected 213-avoiding permutation diagrams are well known to be counted by the Catalan numbers, providing an alternative proof of  \cref{lem ladders tubings}. Similarly, since 1-terminal chord diagrams have been shown to be counted by the double factorials $(2n-3)!! = 1\cdot 3\cdots (2n-5)(2n-3)$ (see \cite{courtiel_terminal_2017, nabergall_combinatorics_2023}), the above result implies that double factorials count leaf tubings (compare the discussion in the beginning of \cref{{sec examples}}) as well. 

As we showed in  \cref{lem chains tubings}, the number of tubings of corollas with $n$ vertices is given by $(n-1)!$. Following the above result it is natural to ask whether this class of tubings is also mapped to a nice class of chord diagrams under $\theta$. As immediately implied by the definition, the number of permutation diagrams with $n$ chords is given by $n!$. Additionally restricting to those permutation diagrams that are 1-terminal gives a natural class of chord diagrams counted by $(n-1)!$. Together with  \cref{prop interesting chord classes} and the fact that all tubings of corollas are leaf tubings, this suggests that tubed corollas would most naturally correspond to 1-terminal permutation diagrams. But the map $\theta$ clearly does not restrict to a bijection between tubed corollas and 1-terminal permutation diagrams -- e.g. see the fourth pair in the first column of  \cref{figure bijection example}. 

We now briefly present an alternative bijection, other than $\theta$ (\cref{map tubed RTs to connected CDs}), between leaf tubings and 1-terminal chord diagrams that does restrict to the desired map on tubed corollas. The bijection is essentially described recursively in \cite{nabergall_combinatorics_2023,nabergall_enumerative_2022}, but here we give a non-recursive definition via \emph{decreasing trees}, ordered trees with a decreasing labelling. The map given in the proof of \cref{leaf_label_lemma} then completes the description. For a 1-terminal chord diagram $T$, let $c_{1}, \ldots, c_{n}$ be the chords of $T$ ordered by their sources. Furthermore, define the \emph{sink group} of a chord $c \in T$ to be the sequence of chords (in order of their sinks) attached to a sink in the maximal (possibly empty) interval of sinks immediately to the right of the source of $c$. Then define $\kappa(T)$ to be the tree with vertices labelled $1, \ldots, n$ such that the children of the vertex labelled $i$ are the vertices labelled $i_{1}, \ldots, i_{k}$, where $c_{i_{j}}$ is the $j$th chord in the sink group of $c_{i}$. It is straightforward to see that $\kappa$ is a bijection between 1-terminal chord diagrams with $n$ chords and decreasing trees with $n$ vertices. Furthermore, by 1-terminality and the fact that the terminal chord (the rightmost chord) of a 1-terminal permutation diagram is the only chord with a nonempty sink group it readily follows that $\kappa$ restricts to a bijection between 1-terminal permutation diagrams and decreasing corollas, as desired. Observe that this bijection distentangles the underlying tree of the tubing, identifying it with a simple feature of 1-terminal diagrams, sink groups, in a way that $\theta$ does not appear to.

\subsection{The tubing expansion matches the chord diagram expansion}
\label{sec tubing expansion matches}

We have shown that the tubing expansion of \cref{thm main thm} is a solution of the Dyson--Schwinger equation \cref{dse_differential_general}, and that the map $\theta$ is a bijection between tubings and chord diagrams (\cref{bijection RTIP to CDIP}). However, this is not enough yet to conclude that the chord diagram expansion solutions of Dyson--Schwinger equations of \cite{marie_chord_2013, hihn_generalized_2019} are the result of applying $\theta$ to the tubing expansion of \cref{thm main thm} since they could be two different expansions indexed by the same objects; that is, two expansions could associate different terms of the formal power series expansion to the same chord diagram and so collect together terms of the formal power series solution differently. In order to show that the two expansions do line up it remains to check that the parameters used in the two expansions line up diagram-by-diagram.  Showing this is the goal of the present section.

\medskip

The solutions to Dyson--Schwinger equations of \cite{marie_chord_2013, hihn_generalized_2019} are indexed by rooted connected chord diagrams with weighted chords.  Specifically, a \emph{weighted rooted chord diagram} is a rooted chord diagram $C$ (\cref{rooted chord diagram}) along with a map $w$ from the chords of $C$ to the positive integers.  For a chord $a$, $w(a)$ is called the weight of $a$.  The weight of the chord diagram, written $\|C\|$ is the sum of the weights of the chords.  We will also conflate chords and their indices in intersection order (\cref{intersection order}) and so write $w(i)$ for the weight of the $i$th chord in intersection order.  Also for a chord diagram $C$ with terminal chords indexed by $t_1<t_2<\cdots < t_j$ and sequences $\{c_{m,n}\}$ define
\[
c(C) = \prod_{i=2}^{j} c_{(t_i-t_{i-1}) , w(t_i)} \prod_{a \text{ not terminal}} c_{0, w(a)}.
\]

We need to define one more parameter of a chord before we can state the chord diagram expansion result and that is the definition of the branch-left or $\nu$ parameter.  This definition requires an auxiliary binary tree, but fortunately, we do not need the leaf labelling of this binary tree, only its shape.  These binary trees are the kind which is given by either a root alone or a root and two binary trees, called the left subtree and the right subtree.  Note that in this case the subtrees may not be empty, so this is a type of full binary tree.  Given two such binary trees $T_1$ and $T_2$ define the insertion operation $T_1 \circ_k T_2$ with $k$ at most the number of vertices of $T_2$ as follows (see Definition 3.8 of \cite{marie_chord_2013} and Definition 4.1 of \cite{hihn_generalized_2019}, and the example in \cref{figure branch left insertion}). 
\begin{itemize}
    \item Label the vertices of $T_2$ following a pre-order traversal\footnote{The notion of tree traversal comes from theoretical computer science. Mathematically a traversal of a tree is a total order on its vertices, but following the computer science language, it is usually spoken of as a procedure wherein the vertices are met sequentially following this total order.  Specifically, for a plane rooted tree, the \emph{pre-order traversal} meets the root first, then meets all the vertices of the subtree rooted at the first child of the root, recursively following a pre-order traversal on this subtree, then all the vertices of the subtree rooted at the second child of the root following a pre-order traversal and so on.} of $T_2$.  
    \item Put a new vertex in the middle of the edge leading from vertex $k$ of $T_2$ to the parent of that vertex making the subtree rooted at vertex $k$ the right subtree of this new vertex.  If $k=1$ so the vertex is the root of $T_2$, then put a new vertex above the root of $T_2$ with $T_2$ as the right subtree.
    \item Place $T_1$ as the left subtree of the new vertex.
\end{itemize}

\begin{figure}[htbp]
    \centering
    \input{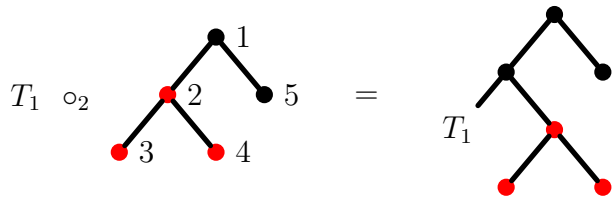}
    \caption{Insertion operation  $T_1 \circ_k T_2$ to build the auxiliary binary tree.}
    \label{figure branch left insertion}
\end{figure}

In a similar way to the construction of $\mu$ in \cref{bijection tubed RTs to connected CDs}, this decomposition allows us to recursively define a map from rooted connected chord diagrams to binary trees.  Specifically, map the rooted connected chord diagram on one chord to the binary tree on one vertex.  Given a rooted connected chord diagram $C$ with more than one chord, decompose it into the outermost connected component after removing the root, $C_2$ and the rest of the diagram, $C_1$, and let $k$ be the insertion place in $C_2$ where $C_1$ less its root was inserted.  Recursively apply this map to $C_1$ and $C_2$ to get $T_1$ and $T_2$ respectively and then define the result of this map on $C$ to be $T_1 \circ_k T_2$. See \cite{marie_chord_2013} Definition 3.9 or \cite{hihn_generalized_2019} Definition 4.3 for details.  Note that this map is not a bijection.  It can be made injective by labelling the leaves in a way that allows us to recreate the chord diagram, however, then the map is not surjective onto all leaf-labelled binary trees.  A characterization of which leaf-labelled binary trees occur is given in \cite{marie_chord_2013} Theorem 4.8 but it is quite unsatisfactory.  Fortunately, for the present purposes, we do not need the leaf labelling nor any form of bijectivity, but it is important to note that under the map to binary trees every chord of the chord diagram corresponds to a leaf of the binary tree and every leaf comes from a chord of the chord diagram.

Now, given a rooted connected chord diagram $C$ and a chord $a$ of $C$, define $\nu(a)$ to be the number of edges in the path that begins at the vertex of the binary tree associated to $a$ and goes up and to the left so long as that is possible.

The chord diagram expansion result is that when $s$ is a negative integer, the solution to the Dyson--Schwinger equation \cref{dse_differential_general} with $F_k(\rho) = \sum_{i\geq 0} c_{i,k}\rho^{i-1}$ can be written as (Theorem 7.4 of \cite{hihn_generalized_2019})
\begin{equation}\label{eq chord soln}
G(x,L) = 1+\sum_{C} \left(\prod_{a \text{ chord of } C}(-1)^{\nu(a)}\binom{w(a)(-s)+\nu(a)-2}{\nu(a)}\right) c(C)x^{\|C\|}\sum_{k = 1}^{b(C)}\frac{L^k}{k!}c_{t_1-k,w(t_1)}
\end{equation}
where the sum runs over weighted rooted connected chord diagrams with terminal chords indexed $t_1<t_2<\cdots t_j$ in intersection order and $b(C)=t_1$ is the index of the first terminal chord. The signs in \cref{eq chord soln} are due to different conventions between the present paper and \cite{hihn_generalized_2019}.  Using the negative binomial identity, we can rewrite the  factor in parentheses as
\begin{equation}\label{eq better binomial factors}
\prod_{a \text{ chord of } C}\binom{1+w(a)s}{\nu(a)}.
\end{equation}

\medskip

Our claim now is that this expansion agrees with the tubing expansion (\cref{thm main thm}) under the action of $\theta$ (\cref{map tubed RTs to connected CDs}).  We will need quite a few preliminary lemmas towards the proof of this claim.

\begin{lemma}\label{lem root first term}
Let $\tau$ be a tubing of a rooted plane tree $t$ and let $C= \theta(\tau)$ be the associated chord diagram under the bijection $\theta$ of \cref{map tubed RTs to connected CDs}. Then the first terminal chord of $C$ corresponds to the root vertex of $t$.
\end{lemma}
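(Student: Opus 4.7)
The plan is to proceed by induction on the number of vertices of $t$. The base case $|V(t)| = 1$ is immediate from the definition: a single-vertex tree gets the unique tubing, which $\theta$ sends to the single-chord diagram; that chord is its only (hence first) terminal chord and corresponds to the (only) root vertex.

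For the inductive step, I will use the recursive decomposition from \cref{rem tau prime and tau double prime}: write $\tau = (\tau', \tau'')$, where $\tau'$ tubes a proper rooted subtree $t'$, while $\tau''$ tubes $t'' = t \setminus t'$ and contains the root. Let $C'' = \theta(\tau'')$, $C' = \theta(\tau')$, and $C = \theta(\tau)$. Writing $q'$ for the root chord of $C'$, the definition of $\theta$ produces $C$ by first inserting $C'$ into $C''$ at the appropriate chord-diagram insertion place, then moving the left endpoint of $q'$ all the way to the leftmost position; the resulting chord $r$ is the root of $C$. I then plan to analyze the connected components of $C \setminus r$: the chords of $C''$ remain mutually connected (via the same crossings as in $C''$, which is connected by hypothesis) and their endpoints span all the way to the rightmost position of $C$; on the other hand, the chords of $C' \setminus q'$ are confined to the interior of the insertion-place gap of $C''$, so they cannot cross any chord of $C''$ and therefore form connected components nested strictly inside the $C''$-component. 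Hence, in the intersection order of $C$, after labelling $r$ with $1$ we must label the $C''$-chords next (they supply the outermost component, with the leftmost first point), before descending into the components coming from $C'\setminus q'$.

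Next, I will verify that every terminal chord of $C''$ is still terminal in $C$. The only new chords that might witness non-terminality are $r$ and the chords of $C'\setminus q'$. The latter lie between the ``left part'' and ``right part'' of $C''$ induced by the insertion place, so their right endpoints are strictly to the left of all right endpoints of $C''$-chords that span the gap, which rules them out. For $r = (1, p)$ with $p$ sitting inside the inserted $C'$ region, a direct index comparison shows that for any $C''$-chord $c = (a, b)$ satisfying $a < p$ one automatically has $1 < a$, so $r$ crosses $c$ only from the left and cannot spoil terminality of $c$. Together with the first paragraph, this tells us that the first terminal chord of $C$ in intersection order is precisely the first terminal chord of $C''$ in its own intersection order.

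Finally, applying the inductive hypothesis to $\tau''$, the first terminal chord of $C''$ corresponds (under the vertex-to-chord correspondence tracked recursively by $\theta$) to the root vertex of $t''$, which is by construction the root of $t$; this finishes the induction. The main obstacle I anticipate is not the induction itself but the careful bookkeeping needed to justify the nested-component structure of $C \setminus r$ and the claim that chords of $C''$ really are labelled first in the intersection order of $C$; once that structural picture is pinned down, both the terminality preservation and the inductive conclusion fall out cleanly.
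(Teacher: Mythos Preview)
Your inductive approach is correct but takes a genuinely different route from the paper. The paper invokes an external characterization (Proposition~22 of \cite{courtiel_terminal_2017}): the first terminal chord in intersection order is simply the chord with the rightmost endpoint. With that in hand, the inductive step is immediate, since inserting $\theta(\tau')$ into $\theta(\tau'')$ places all endpoints of $\theta(\tau')$ either strictly inside or to the left of $\theta(\tau'')$, so the rightmost endpoint of $C$ is inherited from $\theta(\tau'')$ and corresponds to the root of $t''=t$. Your argument instead works directly with the definition of intersection order and terminality: you show that removing the root chord $r$ leaves the $C''$-chords as the outermost connected component, that they are labelled next (in the same relative order as in $C''$), and that their terminality status is unchanged, whence the first terminal chord of $C$ coincides with that of $C''$. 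This is more laborious but entirely self-contained, avoiding the external reference.

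One small point you should make explicit: you must rule out that the root chord $r$ itself is the first terminal chord. This is easy---in a connected chord diagram with more than one chord the root is necessarily crossed by some other chord, and since the root has the leftmost source, any such crossing is on the right, so $r$ is not terminal---but without saying so, the sentence ``the first terminal chord of $C$ in intersection order is precisely the first terminal chord of $C''$'' is not yet justified. Once that is added, your argument is complete.
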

Note that $\theta$ induces a bijection between the vertices of $t$ and the chords of $C$.

\begin{proof}
By Proposition 22 of \cite{courtiel_terminal_2017} the first terminal chord is the chord with the rightmost endpoint.

The proof of the lemma is inductive.  When $t$ has only one vertex then $C$ has only one chord.  That one chord is terminal and corresponds to the root of $t$.

Suppose $t$ has two or more vertices.  Then $\tau$ has two tubes immediately inside the outermost tube and these two tubes partition the vertices of $t$ into two trees.  Let $t'$ and $t''$ be these two trees with $t''$ containing the root of $t$ and let $\tau'$ and $\tau''$ be the induced tubings on $t'$ and $t''$ respectively as discussed in \cref{rem tau prime and tau double prime}.  
Inductively, the first terminal chord of $\theta(\tau'')$ corresponds to the root of $t''$.  In inserting $\theta(\tau')$ into $\theta(\tau'')$ so as to form $\theta(\tau)$, the chords of $\theta(\tau')$ have all their endpoints either strictly inside $\theta(\tau'')$ or to the left of $\theta(\tau'')$.  In particular the chord with the rightmost endpoint of $\theta(\tau)$ comes from the chord with the rightmost endpoint of $\theta(\tau'')$, hence corresponds to the root of $t''$, and thus to the root of $t$.
\end{proof}

\begin{lemma}\label{lem root term}
With $\tau, t, C$ as in \cref{lem root first term}, 
the set of terminal chords of $C$ corresponds to the set containing the root of $t$ along with all vertices of $t$ that are the root of at least one tube containing two or more vertices.  
\end{lemma}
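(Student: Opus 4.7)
The plan is induction on $|t|$. The base case $|t|=1$ is immediate: the single chord is terminal and corresponds to $\rootv(t)$, matching the stated set.

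For the inductive step, write $\tau = \tau'\oplus_x\tau''$ with $\rootv(t)\in t''$, so that $C = C'\oplus_y C''$ where $y=f(x)$, $C' = \theta(\tau')$, and $C'' = \theta(\tau'')$. Using the endpoint-shift formulas in \cref{remark:map tubed RTs to CDs}, I would split the chords of $C$ into five types: the root of $C'$, which becomes $(1,k+i)$; the non-root chords of $C'$, shifted uniformly by $+i$; and the three classes of $C''$-chords according to whether both, one, or neither of their endpoints is at most $i$. A routine case analysis comparing left and right endpoints then yields three facts: the root chord of $C'$ is never terminal in $C$, because any chord of $C''$ straddling the insertion point (at least one exists since $C''$ is connected, hence indecomposable) shifts to a chord crossing $(1,k+i)$ on the right; a non-root chord of $C'$ is terminal in $C$ iff it was terminal in $C'$, since no shifted $C''$-chord can cross it on the right; and a chord of $C$ coming from $C''$ is terminal iff it was terminal in $C''$, since neither the root of $C'$ (left endpoint $1$) nor any shifted non-root chord of $C'$ can cross it on the right.

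Next I would apply the inductive hypothesis: terminal chords of $C'$ and $C''$ correspond under $\theta$ to the analogues $S(\tau')$ and $S(\tau'')$ of the set in the statement. Unwinding the recursive construction of $\theta$, the root chord of $C'$ corresponds to the vertex $\ell(\tau')$ obtained from $\tau'$ by repeatedly passing to the primed side of the outermost partition until reaching a single-vertex tube. At every step in this descent the root of the current tube lies in the complementary double-primed part, so $\ell(\tau')$ is the root of no tube of $\tau'$ of size $\geq 2$, and $\ell(\tau')=\rootv(t')$ precisely when $|t'|=1$; consequently $\ell(\tau')\in S(\tau')$ precisely when $|t'|=1$. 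Combining with the terminality analysis, the terminal chords of $C$ correspond to $S(\tau'')\cup\bigl(S(\tau')\setminus\{\ell(\tau')\}\bigr)$.

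It remains to identify this union with $S(\tau)$. Since the tubes of $\tau$ are the outer tube (rooted at $\rootv(t)=\rootv(t'')$) together with the tubes of $\tau'$ and $\tau''$, a vertex lies in $S(\tau)$ iff it is $\rootv(t)$ or is the root of some size-$\geq 2$ tube of $\tau'$ or $\tau''$. Checking the two cases $|t'|=1$ and $|t'|\geq 2$ separately, using the characterization of $\ell(\tau')$ from the previous paragraph, then confirms the equality and closes the induction. The hard part is the endpoint-comparison case analysis of step one together with the observation that the root of $C'$ corresponds to $\ell(\tau')$ rather than to the naively expected $\rootv(t')$.
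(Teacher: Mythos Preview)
Your proof is correct and follows the same inductive skeleton as the paper's: analyze how terminality changes under the insertion $C' \oplus_y C''$, observe that the only chord whose status is affected is the root of $C'$, and then match this on the tree side. Your endpoint case analysis is a more detailed version of the paper's one-line observation that ``the only new crossings created by the insertion of $\theta(\tau')$ into $\theta(\tau'')$ are those between the root of $\theta(\tau')$ and one or more chords of $\theta(\tau'')$.''

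The one genuine difference is your introduction of $\ell(\tau')$. The paper avoids this entirely by noting directly that the root of $C'$ is terminal in $C'$ if and only if $|t'|=1$, in which case the corresponding vertex is simply the unique vertex of $t'$, and that vertex is visibly not the root of any size-$\geq 2$ tube in $\tau$. Your route is equivalent but does more work: you identify $\ell(\tau')$ in general, prove it lies in $S(\tau')$ precisely when $|t'|=1$, and then carry the set-subtraction $S(\tau')\setminus\{\ell(\tau')\}$ through both cases. This is sound but unnecessary, since when $|t'|\geq 2$ the root of $C'$ was not terminal anyway and nothing needs to be removed. The paper's phrasing is shorter; yours makes the vertex--chord correspondence for the root more explicit, which is not without value given how the recursion of $\theta$ unwinds.
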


Note that the root of $t$ is always the root of the outermost tube of $\tau$, so the only case where it needs to be separately included as in the statement of the lemma is the case where $t$ has exactly one vertex.  The chord of the one chord diagram is terminal and so the vertex of unique tubing of the one vertex tree must also be terminal.  The one chord diagram is the only rooted connected chord diagram whose root chord is terminal.

\begin{proof}
The proof of the lemma is again inductive.  When $t$ has only one vertex then $C$ has only one chord.  That one chord is terminal and is included among the set of vertices described in the statement.

Suppose $t$ has 2 or more vertices.  Let $t'$, $t''$, $\tau'$, $\tau''$ be as in the proof of the previous lemma and \cref{rem tau prime and tau double prime}.

The only new crossings created by the insertion of $\theta(\tau')$ into $\theta(\tau'')$ in order to build $\theta(\tau)$ are those between the root of $\theta(\tau')$ (which becomes the root of $\theta(\tau)$ and one or more chords of $\theta(\tau'')$.  Since the root chord is the first chord, these new crossings can not cause any chords of $\theta(\tau'')$ to stop being terminal.  The new crossings stop the root of $\theta(\tau')$ from being terminal, if it was before, which only happens when $t'$ is the one vertex tree.  The insertion of $\theta(\tau')$ into $\theta(\tau'')$ in order to build $\theta(\tau)$ does not remove any crossings, so it does not cause any previously non-terminal chords to become terminal.

Now consider the insertion of $t'$ into $t''$ in order to build $t$.  Any vertex that is in a tube of size $2$ or more in $\tau'$ or $\tau''$ remains so in $\tau$ since the tubes of $\tau$ are exactly the tubes of $\tau''$, the tubes of $\tau'$ and the outermost tube.  If $t'$ had only one vertex, then that vertex is in its own tube and in the outermost tube in $\tau$, so it is not the root of a tube containing 2 or more vertices in $\tau$.  This exactly mirrors the situation with the terminal chords, so using the statement inductively on $\tau''$ and $\tau'$ we get that the vertices of $t$ corresponding to terminal chords are exactly those which are roots of tubes containing 2 or more vertices.
\end{proof}

\Cref{figure bijection example} shows a larger number of 4-vertex tubings with emphasis on the terminal chords as given by \cref{lem root first term,lem root term}.

\begin{figure}[htb]
    \centering
    \input{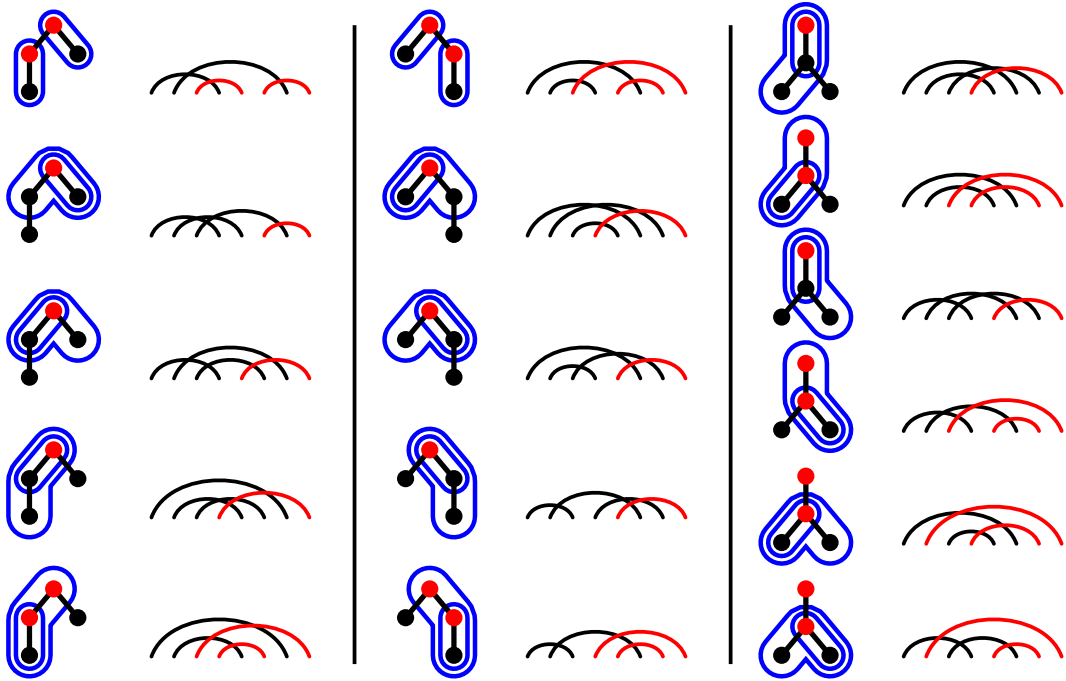}   
    \caption{The bijection between tubings and chord diagrams for all tubings on 4 vertices except for ladders and corollas.   Terminal chords / vertices are drawn in red, they satisfy \cref{lem root term}.    }
    \label{figure bijection example}
\end{figure}

\begin{lemma}\label{lem root b}
With $\tau, t, C$ as in \cref{lem root first term}, let $t_1$ be the index of the first terminal chord of $C$ in the intersection order (\cref{intersection order}),
then the $b$ statistic (\cref{def b statistic}) is $b(\tau) = t_1$ 
\end{lemma}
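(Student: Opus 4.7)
The plan is to prove this by induction on the number of vertices of $t$, exploiting the parallel recursive structures: a tubing decomposes as $\tau = \tau' \oplus_x \tau''$ (with the root of $t$ in $t''$, as in \cref{rem tau prime and tau double prime}), while under $\theta$ the corresponding chord diagram decomposes as $C = C' \oplus_y C''$ with $y = f(x)$, and the root chord of $C$ inherited from $C'$. The base case $|t|=1$ is immediate: the unique tubing has $b(\tau)=1$ (root lies only in the outermost = innermost tube), and the one-chord diagram has its unique chord as both root and terminal, so $t_1 = 1$.

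For the inductive step, the tubing side is easy. Since the tubes of $\tau$ are exactly the tubes of $\tau'$, the tubes of $\tau''$, and the single new outermost tube (rooted at $\rootv(t) = \rootv(t'')$), we get immediately that $b(\tau) = b(\tau'') + 1$. The chord-diagram side requires tracking intersection-order labels through the construction of $\theta$. The root of $C$ receives label $1$, and then $C - \rootv(C)$ splits into a sequence of nested connected components, which must be labeled in order of their first points. The main task is to argue that $C''$ is the \emph{outermost} of these components, so it is labeled before anything coming from $C'$, receiving the contiguous block of labels $2, 3, \ldots, |C''|+1$, and in the same relative order as its own intersection order. Granted this, the first terminal chord of $C''$—which by \cref{lem root first term} corresponds to $\rootv(t'')=\rootv(t)$, and hence is also the first terminal chord of $C$—has index $t_1(C'') + 1 = b(\tau'') + 1 = b(\tau)$ in $C$ by induction, closing the argument.

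The main obstacle is justifying that $C''$ sits outermost and that the recursive rule for intersection order really does reproduce the intersection order of $C''$ when restricted to $C''$ inside $C$. I would verify this by examining the positional accounting in \cref{map tubed RTs to connected CDs}: before moving the left end of $\rootv(C')$ to position $1$, the diagram $C' \oplus_y C''$ is disconnected indecomposable with $C'$ nested inside $C''$; moving that one endpoint leftward preserves this nesting for the remaining chords, so $C''$ stays outermost and the leftmost endpoint of $C - \rootv(C)$ lies in $C''$. The fact that the recursive labelling within $C''$ matches its standalone intersection order follows because intersection order is defined only from the induced structure on each connected component, which is unchanged by the surrounding chords. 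Combining these observations with \cref{lem root first term} yields $t_1 = b(\tau)$, completing the induction.
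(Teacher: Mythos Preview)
Your proposal is correct and follows essentially the same inductive approach as the paper: both use the recurrence $b(\tau) = b(\tau'') + 1$ on the tubing side and the fact that in the intersection order of $C = \theta(\tau)$ the root comes first, then all of $C'' = \theta(\tau'')$, then the remaining chords of $C'$, so that $t_1(C) = t_1(C'') + 1$. The paper's proof is terser---it simply asserts the ordering of the intersection labels without the explicit justification you give for why $C''$ is the outermost component after removing the root---but the logic is the same.
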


\begin{proof}
When $t$ has only one vertex then $C$ has only one chord.  That one chord is terminal and has index $1$. The root vertex is in exactly one tube so $b(\tau) = t_1$.

Suppose $t$ has 2 or more vertices.   Let $t'$, $t''$, $\tau'$, $\tau''$ be as in the proof of the previous lemma and \cref{rem tau prime and tau double prime}.

The index of the first terminal chord in $\theta(\tau'')$ is one less than the index of the first terminal chord in $\theta(\tau)$ since in the intersection order for $\theta(\tau)$, the root comes first, then all the chords of $\theta(\tau'')$, then other chords of $\theta(\tau')$.  Similarly, $b(\tau'') -1 = b(\tau)$ since the outermost tube contains the root in $\tau$ and otherwise the tubes containing the root are the same as in $\tau''$.

The result follows by induction.
\end{proof}

\begin{lemma}
With $\tau, t, C$ as in \cref{lem root first term}, 
let $t_1 < t_2 < \cdots < t_k$ be the indices of the terminal chords of $C$ in the intersection order, and let $v_1, v_2, \ldots, v_k$ be the vertices corresponding to the terminal chords with $v_i$ corresponding to the terminal chord with index $t_i$.
Then $b(v_i, \tau)-1 = t_i-t_{i-1}$ for $1 < i \leq k$.
\end{lemma}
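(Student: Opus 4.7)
The plan is to prove the statement by induction on $|t|$, with the base case $|t|=1$ vacuous since there is only one terminal chord and the claim concerns $i>1$. For the inductive step, decompose $\tau$ via \cref{rem tau prime and tau double prime} into sub-tubings $\tau'$ on $t'$ and $\tau''$ on $t''$, with $t''$ containing the root $r$ of $t$, and set $C'=\theta(\tau')$, $C''=\theta(\tau'')$.

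First, I would relate the terminal chords and intersection order labels of $C$ to those of $C'$ and $C''$. From the positional description of $\theta$ in \cref{remark:map tubed RTs to CDs}, one checks directly that the root of $C$ (whose source is at position 1) cannot cross any chord on the right, and that chords of $C$ coming from $C''$ and non-root chords coming from $C'$ occupy disjoint ranges of positions in $C$ that cannot cross each other on the right; hence a chord coming from $C''$ is terminal in $C$ if and only if it is terminal in $C''$, and the analogous statement holds for non-root chords coming from $C'$. Since $C''$ is connected with at least one chord spanning across the inserted $C'$-region (as any CDIP of a connected diagram is crossed by some spanning chord), $C''$ is the outermost nested component of $C$ after removing its root, and the recursive definition of intersection order yields the following: writing $s_1<\cdots<s_p$ for the intersection order labels of terminal chords in $C''$ and $r_1<\cdots<r_q$ for those of non-root terminal chords in $C'$ (with $q=0$ when $|C'|=1$), the terminal chords of $C$ appear at intersection order labels $s_1+1,\ldots,s_p+1,\,|C''|+r_1,\ldots,|C''|+r_q$.

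Together with $b(v,\tau)=b(v,\tau'')$ for $v\in t''\setminus\{r\}$ and $b(v,\tau)=b(v,\tau')$ for $v\in t'$ (each of which follows since the only new tube introduced in passing from $\tau'\cup\tau''$ to $\tau$ is the outer tube, which is rooted at $r$), the cases $1<i\leq p$ and $p+1<i\leq p+q$ follow directly from the inductive hypothesis applied to $\tau''$ and $\tau'$ respectively. The main obstacle is the bridging case $i=p+1$ (occurring only when $q\geq 1$), which requires the identity $s_p=|C''|$. This follows from the inductive hypothesis on $\tau''$ by a counting argument: by \cref{lem root term}, non-terminal vertices of $\tau''$ satisfy $b(v,\tau'')=1$ since they root only the singleton tube $\{v\}$, while the telescoping sum of $b$-statistics over the terminal vertices $v_1^{(2)},\ldots,v_p^{(2)}$ of $\tau''$ equals $s_p+p-1$ by \cref{lem root b} and the inductive hypothesis; combined with $\sum_v b(v,\tau'')=2|t''|-1$ from \cref{lem recursive tubing}, this forces $s_p=|t''|=|C''|$. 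The bridging case then follows from $t_{p+1}-t_p=(|C''|+r_1)-(s_p+1)=r_1-1=b(\text{root of }t',\tau')-1=b(v_{p+1},\tau)-1$, using \cref{lem root b} applied to $\tau'$ and the fact that $v_{p+1}$ is the root of $t'$, which lies in $t'$.
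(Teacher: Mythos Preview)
Your proof is correct and follows essentially the same inductive architecture as the paper's: the same decomposition into $\tau',\tau''$, the same observation that the intersection order on $C$ is root, then $C''$, then the rest of $C'$, and the same three-way case split (differences within $C''$, the bridging difference, differences within $C'$).

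The one noteworthy divergence is how you establish $s_p = |C''|$ for the bridging case. The paper dispatches this in one line by invoking the combinatorial fact that the last chord in intersection order of any connected chord diagram is terminal, whence $t_{j-1} = |\theta(\tau'')|+1$ directly. You instead deduce $s_p = |C''|$ from the inductive hypothesis via the counting identity $\sum_v b(v,\tau'') = 2|t''|-1$ together with \cref{lem root term} and \cref{lem root b}. Your argument is a bit more roundabout but perfectly valid, and has the mild advantage of not relying on a separate structural fact about intersection order; the paper's route is shorter but assumes (without proof in this lemma) that the last chord in intersection order is always terminal. Either way the logic closes.
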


Note that for the use we are making of the terminal chords, \cref{eq chord soln}, the indices $t_i$ do not matter, only $t_1$ and the $t_i-t_{i-1}$ matter.  Conveniently, these values are easy to read off the tubing, while the indices themselves are messier to see at the level of the tubing.

\begin{proof}
Proceed one final time in the same way.  When $t$ has only one vertex then $C$ has only one chord.  Then $k=1$ so there is nothing to prove.

Suppose $t$ has 2 or more vertices.   Let $t'$, $t''$, $\tau'$, $\tau''$ be as in the proof of the previous lemma and the discussion after  \cref{lem recursive tubing}.

Recall that the intersection order on $\theta(\tau)$ is, first the root (which comes from the root of $\theta(\tau')$, then all the chords of $\theta(\tau'')$ in intersection order, then the remaining chords of $\theta(\tau')$ in intersection order.   These facts together tell us that the differences $t_i-t_{i-1}$ are all the differences of indices of terminal chords from $\theta(\tau'')$ and from $\theta(\tau')$ along with the difference between the index of the chord of $\theta(\tau)$ which comes from the first terminal chord of $\theta(\tau')$ and the index of the chord of $\theta(\tau)$ which comes from the last terminal chord of $\theta(\tau'')$.  Let $t_j$ and $t_{j-1}$ be respectively the indices of these two chords in $\theta(\tau)$. 

Note that the last chord of a diagram is always terminal, so along with the structure of the intersection order, we obtain $t_{j-1} = |\theta(\tau'')| + 1$ and, using  \cref{lem root b} $t_j = |\theta(\tau'')| + b(\tau')$.  Therefore $t_{j} - t_{j-1} = b(\tau')-1$.
 
Now returning to the tubings, $v_1$ is the root of $t$ by  \cref{lem root first term}, so for $1<i\leq k$, $v_i$ is not the root of $t$ and so $b(v_i, \tau^\ell) = b(v_i, \tau)$ where $\ell$ is ${}'$ or ${}''$ according to whether $v_i$ is in $t'$ or $t''$.  Therefore, inductively $b(v_i,\tau)-1 = t_i-t_{i-1}$ for $i\neq j$, $1<i\leq k$, and $b(v_j, \tau) - 1 = b(v_j, \tau') -1 = b(\tau')-1 = t_{j}-t_{j-1}$ using the explicit calculation above and thus completing the induction.
\end{proof}

\begin{lemma}
Let $C$ be a rooted connected chord diagram, let $t$ be the underlying tree of the tubing $\mu(C)$, and let $T$ be the binary tree obtained from $C$ as discussed at the beginning of this subsection. Then, $t$ can be obtained from $T$ by contracting all paths in $T$ that start at a leaf and move up and to the left as long as that is possible.   

In particular, if $a$ is a chord of $C$ and corresponds to $v\in V(t)$ then $\nu(a) = \od{v}$. 

Furthermore, if $C$ is decorated or weighted and the decorations or weights are assigned to the leaves of $T$ and the vertices of $t$ correspondingly, then contracting the upper-left paths as described above and associating the weight of the unique leaf in the path to the resulting vertex, the weights obtained for $t$ through $T$ agree with the weights for $t$ obtained directly from $C$ via $\mu$.
\end{lemma}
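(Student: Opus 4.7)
I would prove this by induction on $n = |C|$, the number of chords in $C$. The base case $n = 1$ is immediate: $T$ and $t$ each consist of a single vertex, the only upper-left path is trivial, and the unique chord is identified with both the unique leaf of $T$ and the unique vertex of $t$, so all three claims hold vacuously.

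For the inductive step, I would decompose $C$ by removing its root chord as in both the $\mu$-construction and the binary-tree construction, obtaining the outermost component $C_2$ and the remainder $C_1$ (containing the root chord together with any inner components), along with the CDIP $k$ of $C_2$ at which $C_1$ was inserted. Writing $T_i$ for the binary tree of $C_i$ and $t_i$ for the underlying tree of $\mu(C_i)$, we have $T = T_1 \circ_k T_2$ and $t = t_2 \oplus_x t_1$, where $x = f^{-1}(y_k)$ is the RTIP in $t_2$ corresponding under \cref{map RTIP to CDIP} to the CDIP $y_k$ of $C_2$. The inductive hypothesis provides $c(T_i) = t_i$ together with matching of $\nu$ values to $\od{\cdot}$ values and of chord weights to vertex weights on each piece.

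The core combinatorial step is to show that upper-left contraction commutes with this insertion, i.e.\ $c(T_1 \circ_k T_2) = c(T_2) \oplus_x c(T_1)$. For this I would analyze the upper-left blocks of $T$ directly: every block of $T_1$ except the one containing its leftmost leaf (call it $B_1$), and every block of $T_2$ except the one passing through vertex $k$ (call it $B_k$), survives intact in $T$. The new vertex $v$ introduced by $\circ_k$ interacts only with $B_1$ and $B_k$; the effect is that $B_1 \cup \{v\}$ merges with the portion of $B_k$ lying strictly above $k$ to form one combined block, while the portion of $B_k$ running from its leftmost descendant leaf up to and including $k$ becomes a new separate block. Reading off parent--child relations from the right-edges of $T$ and carrying along the inherited plane order then shows that the contracted tree is precisely $c(T_2)$ with $c(T_1)$ grafted at the correct RTIP. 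I expect the most delicate part of the argument to be tracking the plane order of the children produced by the contraction and verifying that the pre-order index $k$ in $T_2$ really does match $f^{-1}(y_k)$ as an RTIP of $c(T_2) = t_2$; this should follow by comparing the two recursive indexings side by side, but it is where the bookkeeping gets heaviest.

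Once $c(T) = t$ is in hand, the ``in particular'' statement $\nu(a) = \od{v}$ is an immediate structural observation: an upper-left block of size $m + 1$ contributes a single contracted vertex whose children in $c(T)$ are in bijection with the right-child subtrees hanging off the $m$ non-leaf members of the block, so the contracted vertex has out-degree $m$, which is also the length of the upper-left path from the unique leaf in the block. The weight agreement then follows from the inductive hypothesis together with the observation that each unmerged block retains its unique leaf (hence the weight its leaf had in $c(T_1)$ or $c(T_2)$), while the two blocks altered at $v$ each still contain exactly one leaf, and those leaves label precisely the vertices of $t_1$ and $t_2$ identified by the insertion $t_2 \oplus_x t_1$.
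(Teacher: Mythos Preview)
Your overall strategy---induction on $|C|$, decomposing $C$ into $C_1$ and $C_2$, and showing that upper-left contraction commutes with the insertion $\circ_k$---is exactly the paper's approach, and the reductions of the $\nu=\od{v}$ and weight statements to the first claim are fine.

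However, your block analysis has the left/right direction inverted. In the paper's convention, ``up and to the left'' means you were a \emph{right} child (you move up-left from a right child), so upper-left paths are chains of right-child edges, and it is the edges to right children that get contracted. With this convention, the new vertex $v$ in $T_1\circ_k T_2$ has the root of $T_1$ as its \emph{left} child and vertex $k$ as its \emph{right} child. Consequently no block of $T_1$ changes at all (the root of $T_1$ is a left child of $v$, so its upper-left path still terminates there), and the block $B_k$ neither splits nor merges: it simply gains the single extra vertex $v$ inserted immediately above $k$. Your description---$B_1\cup\{v\}$ merging with the part of $B_k$ above $k$, and $B_k$ splitting at $k$---is what happens under the opposite convention (contracting left-child edges), which is not what $\nu$ measures. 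The corrected picture is actually simpler: $c(T)$ is $c(T_2)$ with $c(T_1)$ grafted as one additional child of the vertex $c(B_k)$, and the position of that new child among the siblings is determined by where $v$ sits along $B_k$.

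Separately, the point you flag as ``most delicate''---that the pre-order index $k$ on $T_2$ matches the RTIP index on $t_2$---is not a side remark but the crux of the argument. The paper handles this by first proving, for an arbitrary binary tree $T$, that the vertices of $T$ in pre-order correspond bijectively to the RTIPs of the contracted tree in their standard order; only then does the inductive step go through. You should make this correspondence explicit rather than leaving it as ``should follow by comparing the two recursive indexings.''
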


\begin{proof}
The key is to show that vertices of $T$ correspond to the insertion places of $t$ after the upper-left paths are contracted.  To that end suppose for the moment that $T$ is any binary rooted tree and $t$ the result of contracting the upper-left paths of $T$. 

When contracting the upper-left paths, each edge that went to a right child gets contracted and each edge that goes to a left child remains an edge in the new tree. Each upper-left path becomes a vertex with children attached by the edges which led to left children of vertices on the path.  If the upper-left path has $k$ edges then there are exactly $k$ such left children and so the contracted vertex itself has $k$ children.  This proves that the second statement of the lemma is implied by the first statement of the lemma.  Note that these $k$ children inherit an order from their order along the upper-left path.  Every upper-left path has exactly one leaf at its lower end.  The leaves that are left children have no upper-left edge and fit into this picture by having upper-left paths of length $0$.  Including the leaves that are left children in this way every leaf of $T$ corresponds to an upper-left path and every vertex of $T$ is in exactly one upper-left path, so the vertices of $t$ are in bijection with the leaves of $T$ and $t$ is a plane rooted tree.  

Insertion in $T$ acts by picking a vertex $v$ and then inserting above $v$ with $v$ as the new right child and with the root of the inserted tree as the new left child $v$.  The edge from the new vertex to $v$ is then an edge that will be contracted in an upper-left path and so $v$ and the new vertex belong to the same upper-left path while the inserted tree will be a child of the vertex in $t$ resulting from the contraction of this upper-left path.  This is the usual insertion into $t$.  Specifically, if $v$ is at neither end of its upper-left path, then this is insertion into $t$ into the insertion place between the child of $v'$ given by $v$'s parent's left child and the child of $v'$ given by $v$'s left child, where $v'$ is the vertex in $t$ obtained by the contraction of the upper-left path containing $v$.  If $v$ was a leaf, then $v$ has no left child so the insertion place in $t$ is the insertion place after every child of $v'$.  If $v$'s parent was not in the upper-left path or $v$ was the root, then the insertion place in $t$ is the insertion place before every child of $v'$.

Furthermore, the indexing of the insertion places in $T$ is according to a pre-order traversal.  Contracting the upper-left paths, each vertex of the path becomes an insertion place of $t$ as described above, but the order these places are met in the traversal does not change, so the indexing of the insertion places agrees with the indexing of the insertion places in $t$ \cref{rooted tree insertion place}.

Finally, now return to a rooted connected chord diagram $C$ with $T$ and $t$ obtained from it as in the statement of the lemma.  The above discussion demonstrated that insertion in $T$ and insertion in $t$ correspond and that the indexing of the insertion places corresponds upon contracting upper-left paths of $T$.  Additionally, the one vertex $T$ contracts trivially to the one vertex $t$.  Inductively, then, the constructions of $T$ and $t$ from $C$ correspond after contracting the upper-left paths of $T$ implying the first statement of the lemma.  As observed earlier in the proof, the first statement of the lemma implies the second statement of the lemma.  The identification between chords of $C$, leaves of $T$ and vertices of $t$ carries through the induction, proving the final statement of the lemma.
\end{proof}

\begin{remark}
One of the arguably unsatisfactory things about \cite{marie_chord_2013} and \cite{hihn_generalized_2019} was the need to move to these binary trees in order to define the $\nu$ statistic and for other parts of the constructions.   One question that was never able to be answered in the past was what set of leaf-labelled binary trees is generated.  The result above finally gives a kind of answer.  The leaf labelling existed in order to make explicit the bijection between the leaves and the chords, and it was necessary to get an injective map from chord diagrams to some sort of binary rooted trees, as without the leaf labelling the map is highly non-injective.

The results of the present paper give us a much more natural and elegant way to record enough extra information in a tree format, namely the tubings.  By \cref{bijection tubed RTs to connected CDs}, $\theta$  is a bijection between rooted connected chord diagrams and tubed rooted trees, so we finally have a bijection with a combinatorially nice image that carries information like $\nu$.  This is what we were looking for but failed to find in the original construction.
\end{remark}

\begin{theorem}\label{thm expansions agree}
The chord diagram expansion solutions \cref{eq chord soln} to the Dyson--Schwinger equation \cref{dse_differential_general} when $s$ is a negative integer agree with the tubing solutions of \cref{thm main thm} after applying the bijection $\theta$ (\cref{map tubed RTs to connected CDs}).
\end{theorem}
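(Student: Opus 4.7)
The plan is to prove this theorem by showing the contributions agree tubing-by-tubing (equivalently, chord-diagram-by-chord-diagram) under $\theta$. Since both expansions are solutions of the same Dyson--Schwinger equation indexed by combinatorial objects in bijection, it suffices to verify that, for each tubing $\tau$ of a weighted plane rooted tree $t$ and its image $C = \theta(\tau)$, the corresponding summands in \cref{thm main thm} (in the plane rooted tree form) and in \cref{eq chord soln} (with the binomial factors rewritten as in \cref{eq better binomial factors}) are equal as polynomials in $L$ with coefficients in the ring generated by $x$ and the $c_{i,k}$.

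First, I would match the prefactors. The weight preservation $w(t)=\|C\|$ follows directly from the recursive construction of $\theta$, since each recursive step transfers one vertex to one chord and weights are inherited. For the binomial product, the bijection between vertices of $t$ and chords of $C$ together with the lemma that $\nu(a) = \od{v}$ whenever $a$ corresponds to $v$ yields
\[
\prod_{v\in V(t)}\binom{1+sw(v)}{\od{v}} \;=\; \prod_{a\text{ chord of }C}\binom{1+sw(a)}{\nu(a)}.
\]

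Next, I would match the polynomial part in $L$. By \cref{lem root first term} the root vertex $\rootv(t)$ corresponds to the first terminal chord, so $w(\rootv(t)) = w(t_1)$. By \cref{lem root b} we have $b(\tau) = t_1 = b(C)$. Therefore
\[
\sum_{i=1}^{b(\tau)} c_{b(\tau)-i,\,w(\rootv(t))} \frac{L^i}{i!} \;=\; \sum_{k=1}^{b(C)} c_{t_1 - k,\,w(t_1)} \frac{L^k}{k!},
\]
identifying the $L$-polynomial factor exactly.

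Finally, I would match the Mellin monomial $c(\tau)$ with $c(C)$. This is the step where the most care is required, and it is the main obstacle. Split the vertices of $t$ other than the root into two classes: those which are roots of at least one tube of size $\ge 2$, and those which are not. By \cref{lem root term} the first class corresponds bijectively to the terminal chords of $C$ other than the first terminal chord, and by the lemma preceding \cref{thm expansions agree} such a vertex $v_i$ satisfies $b(v_i,\tau) - 1 = t_i - t_{i-1}$, contributing the factor $c_{t_i - t_{i-1},\,w(t_i)}$ that appears in $c(C)$. The remaining non-root vertices all have $b(v,\tau) = 1$ (they sit only in their own single-vertex tube and the outermost tubes containing them), contributing $c_{0,w(v)}$, which matches the contribution of non-terminal chords to $c(C)$. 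Combining these three matches gives the term-by-term equality, and summing over $\tau$ (equivalently, over $C$) completes the proof.
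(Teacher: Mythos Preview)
Your proof is correct and follows essentially the same approach as the paper's: both combine the preceding lemmas (identifying the first terminal chord with the root, $b(\tau)=t_1$, terminal chords with vertices rooting size-$\ge 2$ tubes, $b(v_i,\tau)-1=t_i-t_{i-1}$, and $\nu(a)=\od{v}$) to verify that the two expansions agree term by term under $\theta$, with the binomial factors rewritten via \cref{eq better binomial factors}. Your write-up simply spells out the matching more explicitly than the paper's one-line summary; the only minor slip is the parenthetical justification for $b(v,\tau)=1$, where what matters is that $v$ is the \emph{root} of only its singleton tube, not which tubes it sits in.
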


\begin{proof}
Combining the previous lemmas we see that the expansions line up term for term.  For the binomial factors, first use the argument discussed after \eqref{eq chord soln} to obtain binomial factors as in \eqref{eq better binomial factors}, which line up under $\theta$ with the alternate form of the tubing expansion for $G(x,L)$ given in the displayed equation immediately after the statement of \cref{thm main thm}.
\end{proof}

\begin{remark}\label{rem graph by graph}

    As mentioned at the beginning of \cref{sec chord}, the other main deficiency of the original chord diagram expansions is that despite starting with a Feynman graph expansion, which is combinatorially indexed, albeit with an intricate contribution from each graph, and moving to the combinatorially indexed chord diagram expansion, it was not clear which chord diagrams corresponded to the contribution of a given Feynman graph, or even if each Feynman graph's contribution could be written as a sum of individual chord diagram contributions.  The nature of the original proof made this potential connection entirely opaque.

    The original impetus for what became the work of this paper was the desire to answer the question of which chord diagrams give the contribution of a given Feynman graph.  Now we have done this, as we explain below.
    
    For concreteness, let us start with the Yukawa example (\cref{eg Yukawa chord,eg yukawa 1}).  In that case from  \cref{eg yukawa insertion trees}  and \cref{bijection tubed RTs to connected CDs} we see that tubings of the insertion trees of Yukawa graph are in bijection with rooted connected chord diagrams.  \Cref{thm expansions agree} and its proof tell us that the chord diagram and tubing expansions of the solution to the Dyson--Schwinger equation line up term by term under $\theta$ so we have a precise identification of the chord diagrams corresponding to the Yukawa graph by tubing the insertion tree of the Yukawa graph and applying $\theta$.  Finally,  the contribution of the Yukawa graph to the Feynman graph expansion of the Dyson--Schwinger equation agrees with the contribution of these tubings to the tubing expansion, because from \cite{broadhurst_combinatoric_2000} we know that the contribution of the Yukawa graph can be obtained through iterated linearized coproduct via the infinitesimal character $\sigma$ and its convolution powers $\sigma^{*k}$. In terms of renormalization group theory, this is \cref{gammak_rge}: The full Green function can be recovered from the anomalous dimension $\gamma(x)$, and the latter is the linear coefficient in $L$ of the involved Feynman graphs.
    
    In fact, this last argument regarding the convolution powers of $\sigma$ giving the Feynman rules is the argument given more mathematically in \cref{sec tubing feynman rules} and hence holds generally, not just for the Yukawa example.  That is, the Hopf algebraic structure of renormalization tells us that any Feynman graph will contribute to the Green function via linearized coproduct and $\sigma^{*k}$ in precisely the manner which we capture by the tubings of its insertion trees and the proof of this is the argument of \cref{sec tubing feynman rules} and applies to all the Dyson--Schwinger equations of the types we study here. Effectively, the tubings are nothing but a graphical representation of the iterated linearized coproduct applied to the relevant Feynman graphs, represented by their insertion trees.
    
    From there we can apply $\theta$ to get the contribution in terms of chord diagrams. 
    Thus we can finally answer the question of which chord diagrams correspond to the contribution to the Green function of a given Feynman graph, namely, tube the insertion tree of the Feynman graph in all possible ways and then apply $\theta$.
\end{remark}

\section{Conclusion}
\label{sec conclusion}

We discussed the relation between binary tubings, chord diagrams, and Dyson--Schwinger equations where graphs are inserted into only one edge. Concretely:
\begin{enumerate}
    \item Our main result \cref{thm main thm} expresses the series coefficients of the solution $G(x,L)$ of the DSE in terms of binary tubings. In particular, the anomalous dimension can be computed as a weighted sum of tubings.
    \item The tubings represent a 1:1 map between individual Feynman graphs, or their insertion trees, and the summand in the Green function which corresponds to this very graph.
    \item Our tubing expansions solve systems of Dyson--Schwinger equations, not just single equations.  This had not been achieved with the earlier chord diagram expansions.
    \item Restricting to ladder trees, we reproduce the known solutions of linear DSEs (\cref{sec ladders}).
    \item Ladders and corollas represent extreme cases with respect to the number of tubings (\cref{tubings_bound}), they imply a bound for the coefficients of the anomalous dimension of the Yukawa model (\cref{ex yukawa bound}).
    \item The solution of the DSEs in question is known to be encoded by chord diagrams. Our proofs  (\cref{sec tubing feynman rules}) are based entirely on tubed rooted trees, and do not refer to chord diagrams.
    \item We derive an explicit bijection between tubings and the corresponding chord diagrams (\cref{map tubed RTs to connected CDs}). Furthermore, we identify the counterparts of terminal chords, the branch-left parameter and the intersection order,  in terms of tubed trees (\cref{sec tubing expansion matches}).
    \item We find that under this bijection, interesting classes of rooted trees correspond to interesting classes of chord diagrams (\cref{prop interesting chord classes}).
    \item A slightly different notion of tubings has existed in the literature. In \cref{sec associahedra} we call the existing definition \emph{A-tubings} since they furnish the edges and vertices of graph associahedra; our tubings are in bijection with maximal A-tubings of line graphs (\cref{prop L bijection}). 
\end{enumerate}

From a physics perspective, the perhaps most striking point of our results is that they operate on objects which are well-known in Hopf algebra renormalization theory, namely the insertion trees associated to Feynman graphs, and the coproduct, which amounts to cutting these trees. As discussed in \cref{rem linearized coprod,rem graph by graph}, a tubing can be understood as one particular series of iterated coproducts applied to the tree, and each such series gives a contribution to the $L$-linear summand of the Green function, namely the anomalous dimension \cref{G_log_expansion}. The Connes--Kreimer Hopf algebra of rooted trees, which has proven valuable in the abstract understanding and classification of Dyson--Schwinger equations, now also serves as a combinatorial formalism for their solution. This way, one can avoid entirely the use of chord diagrams, which -- apart from looking graphically similar to the particular example of Yukawa Feynman graphs -- are alien to the Hopf algebraic theory of renormalization.  Furthermore, the tubing formulation allowed us to easily generalize beyond Dyson--Schwinger equations which had been solved by chord diagrams to all real $s$ and to systems.

From a combinatorial perspective, our results give a new notion of binary tubings of rooted trees, connect this notion to rooted chord diagrams, to previous notions of tubings which we called A-tubings, and to the Connes--Kreimer Hopf algebra.  Furthermore, we considered interesting statistics on these objects such as the $b$-statistic, which are nice combinatorially, but would likely not have occurred to anyone to study without the physics asking for them.

The following questions are left for future work:
\begin{enumerate}
    \item Consider the tubings of trees on $n$ vertices. In a certain sense, they are ``grouped'' on two levels: Firstly, one can count all those tubings into one set which arise from the same underlying rooted tree. Secondly, by \cref{lem recursive tubing count}, the number of tubings is independent of which vertex is the root, and thereby one could gather all those sets which arise from the same un-rooted tree upon choosing different roots. Is this structure represented in any way on the chord diagram side of the bijection? Similarly, when a chord diagram is drawn in circular form, we can call all those diagrams ``related'' which arise by choosing a different root chord. What does this operation do on the tubing side of the bijection?
    \item Can we obtain useful information on A-tubings by the bijection to binary tubings, as remarked in \cref{rem count A-tubings}?
    \item The bulk of \cref{sec bijection tubings chord diagrams} consisted of the construction of a recursively defined bijection $\theta$ from tubings to connected chord diagrams, but at the end of the section we noted the existence of an alternate bijection which has a non-recursive definition in the case of leaf tubings and is related to a map described more generally, albeit again recursively, in \cite{nabergall_combinatorics_2023, nabergall_enumerative_2022}.  We speculate that it is possible to adapt the recursive map $\alpha$ on connected chord diagrams constructed in \cite{nabergall_combinatorics_2023,nabergall_enumerative_2022} into a natural non-recursive bijection between tubings and connected chord diagrams extending $\kappa$, but leave this for later work.  
    \item Similar techniques to what are used here to solve Dyson--Schwinger equations by tubing expansions can be used to tackle multiple insertion places. This generalization appears in the fifth author's PhD thesis \cite[Chapters 3 and 4]{olson-harris_applications_2024} and will be further expanded upon in a future paper by some of the authors.
    \item We would like to understand the precise relationship between our tubing perspective and the $S\star Y$ operation that is central to \cite{broadhurst_combinatoric_2000} and will pursue this in future work.
\end{enumerate}

\appendix

\section{Tubings and graph associahedra}\label{sec associahedra}

This appendix is a comparison with other notions of tubings and related objects which exist. We want to stress that the definitions and results of our paper are motivated by a concrete application in quantum field theory, we do not aim to re-derive existing theorems. Instead, the present section serves to clarify the relations to similar constructions. 

As noted earlier, our notion of tubing is a special case of the notion of tubings or pipings of posets \cite{galashin_associahedra_2023}, which is used by Galashin to define \emph{poset associahedra} \cite{galashin_associahedra_2023, nguyen_poset_2023, nguyen_poset_2023a, sack_realization_2023}.

On the other hand, the most standard notion of tubing in the literature is the definition of \cite{carr_coxeter_2006}. Those tubings are defined for arbitrary finite graphs; they are described algebraically in \cite{forcey_algebraic_2019}, and an extension to colored tubings has been defined in \cite{devadoss_colorful_2023}.  
Those tubings furnish the vertices of \emph{graph associahedra}\footnote{We use the term \emph{associahedra} only for the classical case, based in this context on ladder graphs $\ell_n$, while the analogous objects based on more general graphs are called \emph{graph associahedra}.} \cite{stasheff_homotopy_1963,carr_coxeter_2006} (see \cref{figure assoiciahedron 1}) therefore we call them \emph{A-tubings} in order to distinguish them from our own notion of binary tubings (\cref{def tubing}). 

The connection between binary tubings (or poset tubings) and A-tubings has been used in \cite{nguyen_poset_2023}, but we think it is helpful to provide a more detailed yet explicit and elementary explanation of this connection.

\begin{figure}[htb]
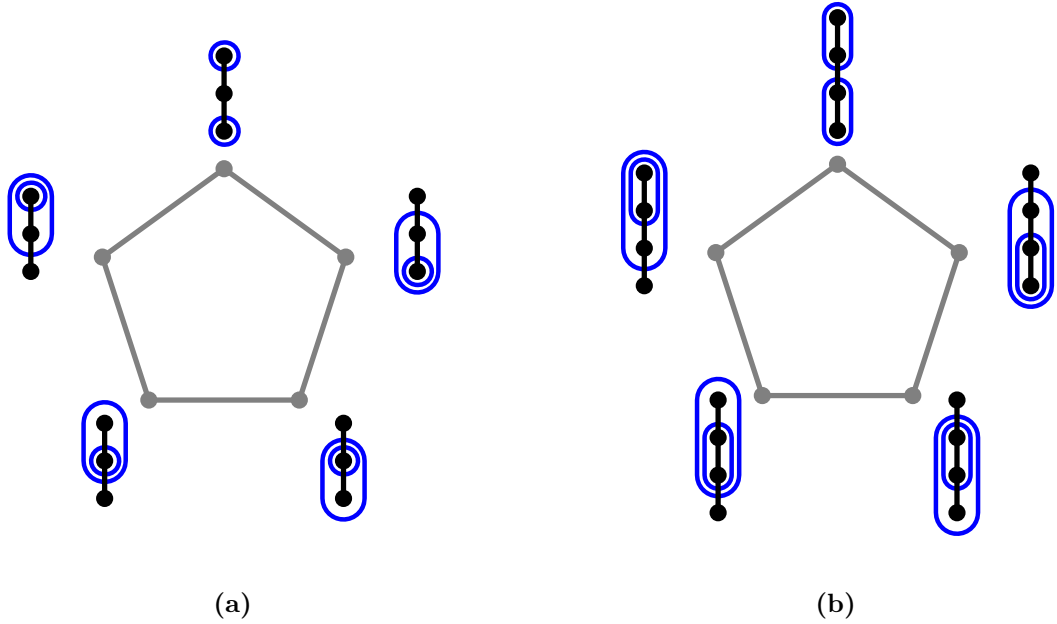

	\centering
	\begin{subfigure}[b]{.6\linewidth}
		\centering
		\input{figure_associahedron_1.tikzpicture}
		\vspace{.7cm}
		\caption{}
		\label{figure assoiciahedron 1}
	\end{subfigure}
	\begin{subfigure}[b]{.35\linewidth}
		\centering
		\input{figure_associahedron_2.tikzpicture}
		\caption{}
		\label{figure assoiciahedron 2}
	\end{subfigure}
	
	\caption{\textbf{(a)} The 2-dimensional associahedron (grey pentagon). The vertices represent the five different A-tubings of $\ell_3$, each of which consists of two tubes. The edges of the associahedron correspond to A-tubings where one of the tubes is removed (not shown in the picture) \textbf{(b)} The same associahedron, but decorated with the binary tubings of $\ell_4$, corresponding to the A-tubings of (a) via the map $L$ (\cref{def tubing L map}). As usual, we do not draw the innermost and outermost tubes. }
	
\end{figure}

\begin{definition}\label{def A-tubing}
	An \emph{A-tubing} is a set of tubes (\cref{def tube})  of a connected finite graph such that any two tubes are either nested, or they are disjoint and their union does not qualify as a tube. An $A$-tubing does not include a tube which contains all of the vertices of the graph\footnote{Note that the outermost tube, that is, the one which contains all vertices, is unique.  In the literature, there are different conventions regarding whether this tube is included in an A-tubing. Following   \cite{carr_coxeter_2006,devadoss_colorful_2023}, we choose to leave it out, although its presence would only require a trivial modification to the map $L$ defined below. Note also that, since no two tubes in an $A$-tubing have a union which could be a tube, the $A$-tubing does not contain tubes which together contain all vertices of the graph. That is, there is necessarily at least one vertex not contained in any tube.}.
	A \emph{maximal A-tubing} of a graph is an A-tubing where no more tubes can be added without violating the conditions of an A-tubing. 
\end{definition}
Recall that a binary tubing according to \cref{def tubing} is always maximal in the sense that no more tubes can be added. We have not used non-maximal binary tubings, though these also appear in the poset associahedron. Further, in \cref{def A-tubing}, a maximal A-tubing excludes the outermost tube which contains all vertices of a graph, but this is a mere convention difference compared to the binary tubings. When drawing these tubings, we will draw \emph{all} tubes of an A-tubing, and not leave out the innermost and outermost ones as we do for binary tubings.

The pivotal difference of an A-tubing to binary tubings is the last condition of \cref{def A-tubing}: In a binary tubing every tube of size $>1$ contains two smaller tubes whose union is the original tube.  In contrast, an A-tubing can never contain two tubes whose union is a tube, not even a tube which is not included in that particular tubing.

There is also a notion of nesting given in \cite[Section 3.2.2]{ward_massey_2022} and \cite{jonson_tubings_2018}.  Maximal nestings can be easily seen to be the same as our binary tubings using the fact that a connected subgraph of a tree is necessarily an induced subgraph.  Non-maximal nestings would correspond to exactly the partial binary tubings mentioned previously, that is to objects resulting from removing some finite number of tubes from a binary tubing.  Nestings in this sense are also known to give polytopes \cite{laplante-anfossi_diagonal_2022}.  Furthermore, the nestings and the A-tubings (and hence their polytopes) are connected via the \emph{line graph} \cite{ward_massey_2022, jonson_tubings_2018}.

For the reader who is interested in this polytope connection, we sketch this bijection via the line graph elementarily and in our notation, without requiring the reader to be familiar with the algebraic setting of many of the above papers, but we emphasize that this is not a new result.

First review the line graph construction from graph theory.  The line graph of a graph $G$ is the graph $L(G)$ whose vertices are the edges of $G$ and where two vertices are adjacent if they are incident to a common vertex in $G$. Note that edges incident to a $k$-valent vertex $v$ in $G$ become the vertices of a complete graph $K_k$ in $L(G)$ and so for an arbitrary simple graph $G$, the line graph $L(G)$ consists of complete graphs $K_n$ on $n \geq 1$ vertices, such that each vertex of the line graph is part of not more than two distinct complete graphs \cite{krausz_demonstration_1932}. In particular, the line graph $L(t)$ of a tree $t$ is not necessarily a tree itself, though the line graph of a ladder is a ladder. If a tree $t$ has $n$ vertices, then, by Eulers formula, $L(t)$ has $n-1$ vertices.

\begin{definition}\label{def tubing L map}
	Let $\tau$ be a binary tubing of a tree $t$. Then let $L(\tau)$ be an A-tubing of the line graph $L(t)$, constructed by the following algorithm: 
	\begin{enumerate}
		\item Leave out all those tubes of $\tau$ which contain only a single vertex.
		\item If $t$ is not a single vertex, leave out the tube of $\tau$ which contains all vertices of $t$.
		\item Every remaining tube of $\tau$ is a tube of the line graph $L(t)$, containing those vertices corresponding to the edges contained in the original tube. 
	\end{enumerate}
\end{definition}

\begin{remark}\label{rem L inverse}
	The inverse $L^{-1}$ of the map from \cref{def tubing L map} is given by the following algorithm: Let $L(t)$ be the line graph of a tree $t$ and  $\alpha$ be a maximal A-tubing of $L(t)$. 
	\begin{enumerate}
		\item For every tube $a\in \alpha$, draw a tube $b$ in $t$ which contains all edges, and their adjacent vertices, corresponding to the vertices contained in $a$.
		\item Add one tube around the whole tree $t$.
		\item If $t$ is not a single vertex, add one tube around every individual vertex in $t$.
	\end{enumerate}
\end{remark}

\begin{proposition}\label{prop L bijection}
	For each tree $t$, The map $L$ from \cref{def tubing L map} is a bijection between the binary tubings of a tree $t$ (\cref{def tubing}) and the maximal A-tubings (\cref{def A-tubing}) of the corresponding line graph $L(t)$. 
\end{proposition}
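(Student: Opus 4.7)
The plan is to verify the stated bijection in four stages: (i) $L(\tau)$ is an A-tubing of $L(t)$ for every binary tubing $\tau$; (ii) $L(\tau)$ is maximal; (iii) the procedure in \cref{rem L inverse} yields a binary tubing from every maximal A-tubing; (iv) the two maps are mutually inverse.

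For (i), if $b$ is a tube of $\tau$ with $|b|\geq 2$, the edges of $t$ lying in $b$ form a connected subgraph of $t$, so their images in $L(t)$ induce a connected subgraph and hence a tube. Any two tubes $b_1,b_2$ of $\tau$ are either nested or vertex-disjoint; nesting is clearly preserved by $L$. In the disjoint case an edge of $b_i$ has both endpoints in $b_i$, so no edge of $b_1$ can share an endpoint with any edge of $b_2$, meaning $L(b_1)$ and $L(b_2)$ are non-adjacent in $L(t)$ and their union is disconnected, hence not a tube. For (ii) I would count: by \cref{lem recursive tubing} a binary tubing of $t$ has $2n-1$ tubes where $n=|V(t)|$, so after discarding the $n$ singletons and the outer tube, $|L(\tau)|=n-2$; meanwhile $L(t)$ is a connected graph on $n-1$ vertices, whose graph associahedron has dimension $n-2$, and the standard Carr--Devadoss correspondence gives that every maximal A-tubing of a connected graph on $N$ vertices has exactly $N-1$ tubes, yielding $n-2$ here. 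Thus $L(\tau)$ attains the maximal cardinality, so it is itself maximal.

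For (iii), starting from a maximal A-tubing $\alpha$, each $a\in\alpha$ is a connected induced subgraph of $L(t)$ corresponding to a connected set of edges of $t$; the set $\tilde a$ of endpoints of these edges is a tube of $t$. Adjoining singletons and the outer tube gives a candidate $\tau = L^{-1}(\alpha)$. Nesting and disjointness of the $\tilde a$'s are inherited from $\alpha$: if two images $\tilde a_1,\tilde a_2$ were vertex-disjoint in $t$ yet shared a common vertex (so the corresponding edge sets would be joined by an edge in $L(t)$), then adjoining the vertex of $L(t)$ lying between them would produce a strictly larger A-tubing, contradicting maximality of $\alpha$. The decisive step is the binary partition condition: every non-singleton $\tilde a \in \tau$ must be a disjoint union of two members of $\tau$. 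I would prove this by induction on $n$: extract the largest proper sub-tube inside the outer tube of $\alpha$, use it to bipartition the outermost tube of $\tau$, and apply the inductive hypothesis to the resulting smaller trees obtained by cutting the corresponding edge. Stage (iv) then unwinds directly, since each non-singleton, non-outer tube $b$ of $\tau$ is recovered from $L(b)$ as the set of endpoints of the corresponding edges of $t$, and the reverse composition is analogous.

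The main obstacle is the binary partition verification in (iii): cardinality alone matches the tube counts on both sides, but the ``split into exactly two'' condition demanded by the second clause of \cref{def tubing} must be reconciled with the combinatorics of maximal A-tubings in $L(t)$. Concretely, I must show that the children of any $\tilde a$ in the containment order of $\tau$ are exactly two and that their union exhausts $\tilde a$. I expect this to follow cleanly by aligning the recursion in \cref{lem recursive tubing} with the recursive decomposition of $L(t)$ along an edge-cut of $t$, but this is the technical core of the argument: everything else is either routine connectedness bookkeeping or a counting check.
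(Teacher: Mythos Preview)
Your four-stage plan is correct and matches the paper's approach, which merely asserts that one checks both algorithms yield valid tubings and are mutually inverse, deferring the details to the literature. The inductive binary-partition check you flag as the crux does go through: a maximal A-tubing of a connected graph leaves exactly one vertex uncovered, and for $L(t)$ that uncovered vertex is an edge $e$ of $t$ with $L(t)\setminus\{e\}\cong L(t')\sqcup L(t'')$ for the two subtrees obtained by cutting $e$, aligning exactly with the recursion of \cref{rem tau prime and tau double prime}.
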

\begin{proof}
	One can check explicitly from their definitions that the two algorithms \cref{def tubing L map,rem L inverse} result in valid tubings according to \cref{def A-tubing,def tubing} and that they are each other's inverses.
	The proof is almost verbatim identical to \cite[Lemma 3.17]{ward_massey_2022} (see also \cite{jonson_tubings_2018}), therefore we skip the details here.
\end{proof}

\begin{example}\label{ex A tubings ladders}
	The ladders $\ell_n$ from \cref{sec ladders} provide a straightforward example of the map $L$ (\cref{def tubing L map}): The line graph of a ladder $\ell_n$ is the ladder $L(\ell_n) = \ell_{n-1}$.  By \cref{lem ladders tubings}, the number of binary tubings of $\ell_n$ is $N(\ell_n)=C_{n-1}$. At the same time, it is known \cite{carr_coxeter_2006} that $\ell_{n-1}$ has $C_{n-1}$ different A-tubings. 
	Indeed, by \cref{prop L bijection}, the binary tubings of $\ell_n$ correspond to the A-tubings of $\ell_n$.
	
	The A-tubings of $\ell_{n-1}$ furnish the vertices of the classical $(n-2)$-dimensional associahedron, shown in \cref{figure assoiciahedron 1} for the case $n=4$. Using the map $L$, the same associahedron can be decorated by binary tubings of $\ell_n$,   shown in \cref{figure assoiciahedron 2}.
\end{example}

\begin{remark}\label{rem count A-tubings}
	The central outcome of the present paper is that binary tubings (\cref{def tubing}) represent a solution to a Dyson--Schwinger equation. In particular, for the Yukawa Mellin transform \cref{eg yukawa mellin transform}, the solution is, up to overall signs, the generating function of binary tubings of a certain class of rooted trees. Using the bijection $L$, the solution can equivalently be viewed as a sum of maximal A-tubings (\cref{def A-tubing}) of line graphs of certain trees. Knowing the solution of the Yukawa DSE implies knowing the generating function of the maximal A-tubings of line graphs in the corresponding cases. 
	
	For the linear DSE (\cref{ex dse trees ladder}), the solution is given by the binary tubings of ladder trees, and we confirm that the Catalan numbers count the maximal A-tubings of ladders, see \cref{ex A tubings ladders}.
	
	Likewise, by \cref{lem chains tubings}, there are $n!$ binary tubings of a corolla tree on $(n+1)$ vertices. Using the bijection $L$, this means that there are $n!$ maximal A-tubings of a complete graph $K_n$.
	
	For the Yukawa DSE with $s=-2$, the Green function is the sum of binary tubings of all plane rooted trees (\cref{eg yukawa insertion trees}). The resulting function is known \cite{broadhurst_exact_2001}, and by the bijection $L$ it is the generating function of maximal A-tubings of all line graphs corresponding to plane rooted trees.  
	
	These examples illustrate that it is possible to obtain information about A-tubings of line graphs by the bijection to binary tubings. We did not investigate this possibility further. 
	
\end{remark}

\cref{figure associahedron large} shows the graph associahedron associated to a 5-vertex tree other than $\ell_5$.   Note that one consequence of \cref{prop L bijection} is that binary tubings of trees always define the vertices of a polyhedron, and reversing the $L$ map we see the other faces always correspond to sets of tubes which are partial tubings in the sense that they are the result of removing tubes from binary tubings.

\begin{figure}[htb]
	\centering
	\input{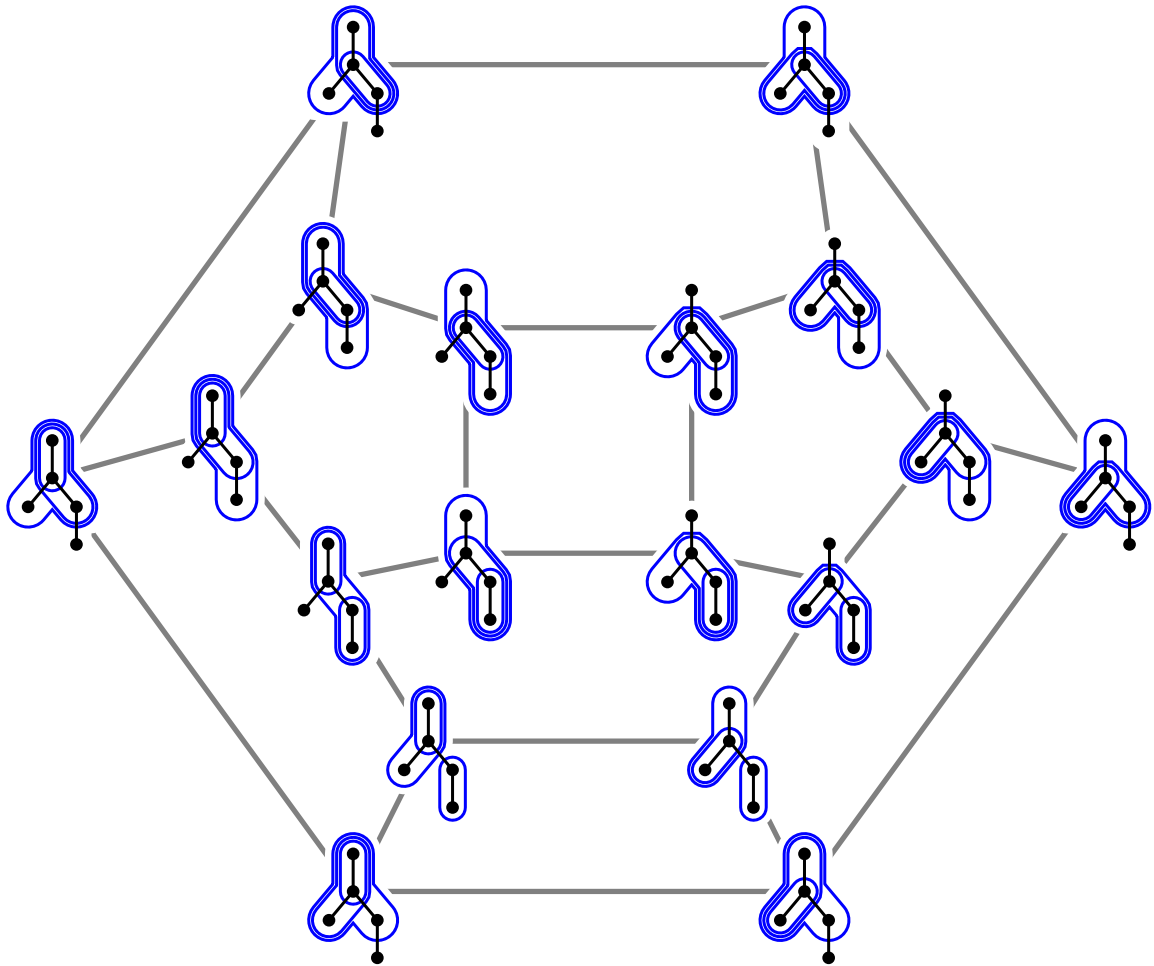}
	\caption{Projection of a 3-dimensional graph associahedron (gray), decorated with the 18 different binary tubings of the underlying 5-vertex tree.}
	\label{figure associahedron large}
\end{figure}

\FloatBarrier

\printbibliography

\end{document}